\newcommand{\Ha}[2]{\medskip\noindent \textbf{(H~#1) }{\begin{em}#2\end{em}}\medskip}
\newcommand{\define}[1]{\textbf{\textit{#1}}}
\newcommand{\up}[1]{\stackrel{#1}{\longrightarrow}}
\newcommand{\wt}[1]{\langle #1 \rangle}
\newcommand{\wtilde}{\widetilde}
\newcommand{\rcl}[1]{\lsem #1\rsem}
\newcommand{\Ab}{\mathbf{Ab}}
\newcommand{\Lex}{\mathbf{Lex}}
\newcommand{\lperp}[1]{{}^\perp{#1}}
\newcommand{\incl}{\hookrightarrow}
\newcommand{\Om}{\Omega}
\renewcommand{\AA}{\mathbb{A}}
\newcommand{\CC}{\mathbb{C}}
\newcommand{\HH}{\mathbb{H}}
\newcommand{\NN}{\mathbb{N}}
\newcommand{\PP}{\mathbb{P}}
\newcommand{\RR}{\mathbb{R}}
\newcommand{\SSS}{\mathbb{S}}
\newcommand{\ZZ}{\mathbb{Z}}
\newcommand{\al}{\alpha}
\newcommand{\be}{\beta}
\newcommand{\Ga}{\Gamma}
\newcommand{\om}{\omega}
\newcommand{\si}{\sigma}
\newcommand{\Hh}{\mathcal{H}}
\newcommand{\Hhnull}{\mathcal{H}_0}
\newcommand{\Hhplus}{\mathcal{H}_+}
\newcommand{\XX}{\mathcal{X}}
\newcommand{\Tt}{\mathcal{T}}
\newcommand{\Ii}{\mathcal{I}}
\newcommand{\Oo}{\mathcal{O}}
\newcommand{\OoX}{\Oo_\XX}
\newcommand{\Ss}{\mathcal{S}}
\newcommand{\Uu}{\mathcal{U}}
\newcommand{\mM}{\frak{m}}
\newcommand{\pP}{\frak{p}}
\newcommand{\Ext}{\mathrm{Ext}}
\newcommand{\Der}{\mathrm{D}^b}
\newcommand{\coh}{\mathrm{coh}}
\newcommand{\cohnull}{\mathrm{coh}_0}
\newcommand{\vect}{\mathrm{vect}}
\newcommand{\svect}{\underline{\mathrm{vect}}}
\newcommand{\Qcoh}{\mathrm{Qcoh}}
\newcommand{\Sing}[2]{\mathrm{Sing}^{#1}(#2)}
\newcommand{\euler}[2]{\langle#1,#2\rangle}
\newcommand{\lcm}[1]{\mathrm{lcm}(#1)}
\newcommand{\mmod}{\mathrm{mod}}
\newcommand{\modgr}[2]{\mathrm{mod}^{#1}(#2)}
\newcommand{\Modgr}[2]{\mathrm{Mod}^{#1}(#2)}
\newcommand{\CMgr}[2]{\mathrm{CM}^{#1}(#2)}
\newcommand{\sCMgr}[2]{\underline{\mathrm{CM}}^{#1}(#2)}
\newcommand{\projgr}[2]{\mathrm{proj}^{#1}(#2)}
\newcommand{\modgrnull}[2]{\mathrm{mod}_0^{#1}(#2)}
\newcommand{\dg}{\mathrm{deg}}
\newcommand{\rk}{\mathrm{rk}}
\newcommand{\Aut}{\mathrm{Aut}}
\newcommand{\Hom}{\mathrm{Hom}}
\newcommand{\sHom}{\underline{\mathrm{Hom}}}
\newcommand{\End}{\mathrm{End}}
\newcommand{\Knull}{\mathrm{{K}}_0}
\newcommand{\Knullred}{\mathrm{{K}_{0}^{red}}}
\newcommand{\Pic}{\mathrm{Pic}}
\newcommand{\tsr}{\otimes}
\newcommand{\aveuler}[2]{\langle\langle #1,#2\rangle\rangle}
\theoremstyle{plain}
\newtheorem{Theorem}{Theorem}[section]
\newtheorem{Comment}[Theorem]{Comment}
\newtheorem{Definition}[Theorem]{Definition}
\newtheorem{definition}[Theorem]{Definition}
\newtheorem{Example}[Theorem]{Example}
\newtheorem{Proposition}[Theorem]{Proposition}
\newtheorem{Corollary}[Theorem]{Corollary}
\newtheorem{Remark}[Theorem]{Remark}
\title{The algebraic theory of fuchsian singularities}
\author{Helmut Lenzing}
\begin{document}
\dedicatory{Dedicated to Jos\'{e} Antonio de la Pe\~{n}a on the occasion of his 60th birthday}
\address{Universit\"{a}t Paderborn; Institut f\"{u}r Mathematik; 33095 Paderborn, Germany}
\email{helmut@math.uni-paderborn.de}

\subjclass[2010]{Primary 18E6,18E30,30F35; Secondary 16G20,16E20.}
\maketitle
\sloppy
\section{Introduction}
A fuchsian singularity, as considered in this article, is classically defined --- for a discrete cocompact subgroup $G$ of the automorphism group $PSL(2,\RR)$ of the upper complex halfplane $\HH$ --- as the graded $\CC$-algebra of $G$-invariant holomorphic differential forms. These graded rings of automorphic forms relate to many mathematical objects of quite different appearance.

This article has three major aims:
\begin{enumerate}
\item Extend the notion of a fuchsian singularity to algebraically closed base field of arbitrary characteristic.
\item Discuss their relationship to mathematical objects of a different nature.
\item Provide a purely algebraic characterization of fuchsian singularities.
\end{enumerate}
In particular, the last task provides an easy, systematic, access to produce members of the rare species of fuchsian singularities that are hypersurfaces or complete intersection. We recall for this, that for underlying genus zero there are just 22 fuchsian hypersurface singularities, with 14 of them forming Arnold's famous strange duality list of exceptional unimodal singularities, compare \cite{Ebeling:2003} and \cite{Lenzing:Pena:2011}.

More specifically, and working over an algebraically closed field of arbitrary characteristic, we are going to show that there exist natural bijective correspondences between isomorphism classes of the following mathematical objects:
\begin{enumerate}[(i)]
\item Weighted smooth projective curves (WPC's) of negative orbifold Euler characteristic.
\item Hereditary noetherian categories with Serre duality (HNC's) of negative orbifold Euler characteristic.
\item $\ZZ$-graded local isolated Gorenstein singularities of Krull dimension two and Gorenstein parameter -1 (IGS's)
\item $\ZZ$-graded orbit algebras, corresponding to the $\tau$-orbit of the structure sheaf on a (WPC) of negative orbifold Euler characteristic. 
\item For $k=\CC$
\begin{enumerate}[(a)]
\item Fuchsian subgroups of  $\Aut(\HH)$, the group of biholomorphic maps of the hyperbolic plane $\HH$.
\item The graded algebras of $G$-invariant differential (automorphic) forms on the upper complex half-plane, attached to a fuchsian group $G$.
\end{enumerate}
\end{enumerate}
In Section~\ref{sect:correspondence} we provide the definitions of the above concepts and establish natural bijections between their isomorphism classes.

In Section~\ref{sect:singularities} we introduce the triangulated singularity category $\Tt$ attached to a graded fuchsian algebra $R$. Let $\XX$ be the (WPC), attached to $R$; we show that $\Tt$ is a one-point extension of the bounded derived category $\Der(\coh(\XX))$ of coherent sheaves on $\XX$. We further discuss various incarnations of the singularity category, describe their Grothendieck groups resp.\ their reduced or numerical Grothendieck groups and their relationship to the respective Grothendieck groups of $\coh(\XX)$. In particular, we establish a connection between the Coxeter polynomials for $\coh(\XX)$ and for $\Tt$ with the Poincar{\'e} series of $R$, compare \cite{Ebeling:2003} for a similar result.

Finally, in Appendix~\ref{app:1} we systematically summarize properties of coherent sheaves on smooth projective curves (resp.\ compact Riemann surfaces), and more generally on (WPC's)~(Appendix~\ref{app:2}).

This paper is a continuation of previous work with Jos\'e Antonio de la Pe\~na dealing with the special case of underlying genus zero, that is, of fuchsian singularities associated to weighted projective lines and their relationship to extended canonical algebras~\cite{Lenzing:Pena:2011}.

\section{Basic definitions and description of the correspondences}\label{sect:correspondence}

\subsection{Weighted projective curves}
\begin{definition}
A \define{weighted smooth projective curve}, weighted projective curve or (WPC) for short, is a pair $\XX=(X,w)$ consisting of a smooth projective curve $X$ and a function $w:X\to \NN_{\geq1}$ to the integers $\geq1$ such that $w(x)>1$ for only finitely many points.
\end{definition}

An isomorphism of (WPC's) $f:=(X,w)\to(X',w')$ is a biregular map $f:X\to X'$ commuting with the weight functions, that is, satisfying $w=w'\circ f$.

If $\XX=(X,w)$ is a (WPC), the points  $x_1,x_2,\ldots,x_t$ with $w(x_i)>1$ are called \define{weighted points}, the other ones \define{ordinary points} and $w_1=w(x_1),\ldots,w_t=w(x_t)$ are called the \define{weights} of $\XX$. In contexts, where the position of the weights does not matter, we also use the notation $\XX=X\wt{w_1,w_2,\ldots,w_t}$.

A (WPC) $\XX=(X,w)$ has an \define{orbifold Euler characteristic}, see Theorem~\ref{thm:OrbifoldEulerChar}, given by:
\begin{equation*}
\chi_\XX=\chi_X -\sum_{x\in X}\left(1-\frac{1}{w(x)} \right).
\end{equation*}
We note that in the above sum only the finitely many points $x$ with $w(x)>1$ matter.

\subsection{The category of coherent sheaves on a (WPC)}\label{sect:WPC}
We now describe a two step procedure associating to a (WPC) $\XX=(X,w)$ a category of coherent sheaves $\coh(\XX)$. We first form the category $\coh(X)$ of (algebraic) coherent sheaves on $X$. (If $k=\CC$ and $X$ is given as a compact Riemann surface then we form instead the category $\coh(X)$ of holomorphic coherent sheaves.) Next, for each weighted point $x_i$ of $X$ having weight $w_i$ we form, following \cite{Lenzing:Miskolc} or \cite{Lenzing:2017}, iteratively the so-called category of $w_i$-cycles. The result is an enlargement $\coh(\XX)$ of $\coh(X)$, containing $\coh(X)$ as a full subcategory, and serving as the category of coherent sheaves on $\XX$. In particular, the structure sheaf $\Oo_X$ will also be chosen as the structure sheaf $\Oo_{\XX}$ for $\XX$. Alternatively,  one can construct $\coh(\XX)$ by means of a sheaf of orders, see \cite{Reiten:VandenBergh:2002} for details.

A visible effect of the iterated cycle construction is that for an ordinary point $p$ there is still a unique simple sheaf, concentrated in $p$. By contrast, for an 'exceptional point' $x_i$ of weight $w_i$ a full Auslander-Reiten orbit of $w_i$ simple sheaves is concentrated in $x_i$. For the subcategory of vector bundles the effect of the iterated cycle construction is not so easily seen; it may however be more dramatic than for the simple sheaves. 

The category $\coh(\XX)$ then is an abelian $k$-category that is connected, noetherian $\Hom$- and $\Ext$-finite with Serre duality which satisfies all axioms of a hereditary noetherian category with Serre duality (HNC).

\subsection{Hereditary noetherian categories with Serre duality}\label{sect:HNC}
Hereditary noetherian categories with Serre duality (HNC's) have an axiomatic description. We follow here the presentation of \cite{Lenzing:Reiten:2006} which also contains the proofs of the derived statements.

By a \emph{hereditary noetherian category} we mean in the
sequel a small $k$-linear category satisfying the following assumptions:

\Ha1{$\Hh$ is a connected abelian $k$-category, and each object $A$ in
$\Hh$ is noetherian, meaning that ascending chains of subobjects for $A$ become stationary.}

\Ha2{$\Hh$ is Ext-finite, that is, all morphism and extension
spaces in $\Hh$ are finite dimensional $k$-vector spaces.}

\Ha3{$\Hh$ satisfies \define{Serre duality}
$\Ext^1(X,Y)=D\Hom(Y,\tau X)$ 
where $\tau$ is a
self-equivalence of $\Hh$, and $D$ denotes the usual $k$-dual. (The self-equivalence $\tau$ is furtheron called \define{Auslander-Reiten translation}.}

\Ha4{$\Hh$ contains an object of infinite length.}

By $\Hhnull$ we denote the Serre subcategory of $\Hh$ consisting of all objects having finite length. We then request:

\Ha5{Each object in the quotient category $\Hh/\Hhnull$ has
finite length.}

By $\Hhplus$ we denote the full subcategory of all objects of $\Hh$ not having any simple subobject. The members of $\Hhplus$ will be called \define{vector bundles}, those of $\Hh_0$ will be called \define{torsion}.
 
\begin{Proposition} \label{prop:splitting}
Each indecomposable object from $\Hh$ is either a vector bundle, that is, belongs $\Hhplus$, or it has finite length and thus is a member of $\Hhnull$. 

Moreover, for some index set $C=C(\Hh)$ the category $\Hhnull$ decomposes 
into a coproduct $\coprod_{x\in C} \Uu_x$, of connected uniserial length categories $\Uu_x$.

The simple objects in $\Uu_x$ form a single Auslander-Reiten orbit of finite cardinality $w(x)$. Only for finitely many points $x\in C(\Hh)$ the cardinality $w(x)$ is $>1$.~\hfill$\square$
\end{Proposition}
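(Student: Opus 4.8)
The plan is to proceed in two stages: first establish the dichotomy of Part~1 by exhibiting $(\Hhnull,\Hhplus)$ as a \emph{splitting} torsion pair, and then analyse the length category $\Hhnull$ internally by means of Serre duality.

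For Part~1, given an object $A$ I would form its torsion subobject $A_0$, the largest finite-length subobject. This exists because $\Hh$ is noetherian: the finite-length subobjects are closed under finite sums (a sum of two of them is a quotient of their direct sum, hence again of finite length) and ascending chains stabilise, so there is a unique maximal one. The quotient $A/A_0$ then lies in $\Hhplus$, since a simple subobject $S\subseteq A/A_0$ would have a preimage in $A$ that is a finite-length extension of $A_0$, contradicting maximality. It remains to split the sequence $0\to A_0\to A\to A/A_0\to 0$. By Serre duality $\Ext^1(A/A_0,A_0)\cong D\Hom(A_0,\tau(A/A_0))$. As $\tau$ is an equivalence it preserves finite length and the class of objects without simple subobjects, so $\tau(A/A_0)\in\Hhplus$; and any morphism from a finite-length object to an object of $\Hhplus$ vanishes, because its image would be a nonzero finite-length subobject and would therefore contain a simple subobject. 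Hence $\Ext^1(A/A_0,A_0)=0$, the sequence splits, and indecomposability of $A$ forces $A\in\Hhnull$ or $A\in\Hhplus$.

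For Parts~2 and~3 I would work inside $\Hhnull$, an abelian hereditary length category on which $\tau$ restricts to a Serre duality. The decisive computation is that for simple objects $S,T$ Serre duality gives $\Ext^1(S,T)\cong D\Hom(T,\tau S)$, and since $\tau S$ is again simple this is nonzero exactly when $T\cong\tau S$, in which case it is one-dimensional (as $\End(\tau S)=k$ over the algebraically closed base field). Thus in the extension quiver of $\Hhnull$ each simple $S$ has exactly one arrow entering and one leaving, joining it to $\tau S$ and to $\tau^{-1}S$. Declaring two simples equivalent when linked by a chain of nonzero extensions, the class of $S$ is precisely its $\tau$-orbit, and these classes index the block decomposition $\Hhnull=\coprod_{x\in C}\Uu_x$ into connected components. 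Because the local extension data is that of a cyclic (or infinite linear) quiver with nilpotent arrows, an induction on length shows every indecomposable of $\Uu_x$ has a unique composition series whose factors run consecutively through the $\tau$-orbit; hence $\Uu_x$ is connected uniserial and its simples form the single $\tau$-orbit, of cardinality $w(x)$.

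The main obstacle is the finiteness asserted in Part~3: that each $\tau$-orbit of simples is finite and that all but finitely many orbits are singletons. This cannot be read off from the internal structure of $\Hhnull$ alone, since an infinite tube is itself a perfectly good hereditary length category with Serre duality; it must instead be extracted from the interaction of $\Hhnull$ with the objects of infinite length furnished by (H~4). The lever I would use is that for any vector bundle $L$ Serre duality yields $\Ext^1(L,M)\cong D\Hom(M,\tau L)=0$ for every finite-length $M$, because $\tau L\in\Hhplus$; combined with (H~5), which renders $L$ of finite length in $\Hh/\Hhnull$, this pins down the behaviour of the dimensions $\dim\Hom(L,S)$ as $S$ ranges over a tube and thereby controls both the rank of each tube and the number of non-homogeneous tubes. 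Converting these vanishing and finiteness statements into the quantitative bounds is exactly the delicate analysis carried out in \cite{Lenzing:Reiten:2006}, which I would invoke to finish the proof.
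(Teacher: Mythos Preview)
The paper does not actually prove this proposition: it is stated with a $\square$ and the surrounding text declares that ``we follow here the presentation of \cite{Lenzing:Reiten:2006} which also contains the proofs of the derived statements.'' So there is no internal argument to compare against; the paper simply delegates to that reference.

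Your sketch is correct and is in fact the standard route taken in \cite{Lenzing:Reiten:2006}. The splitting of the torsion pair via Serre duality, the description of the Ext-quiver of simples as a disjoint union of $\tau$-orbits (cycles or lines), and the resulting uniseriality of each block are all accurate. You have also correctly isolated the genuinely nontrivial point: the finiteness of each $\tau$-orbit and the fact that only finitely many have size $>1$ cannot be seen inside $\Hhnull$ alone and requires the interaction with a vector bundle supplied by (H~4). Your observation that $\Ext^1(L,M)=0$ for $L\in\Hhplus$ and $M\in\Hhnull$ is the right lever; combined with Hom-finiteness it forces the function $j\mapsto\dim\Hom(L,\tau^jS)$ to be eventually constant along the orbit, from which periodicity and the finiteness of non-homogeneous tubes follow. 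Since you explicitly invoke \cite{Lenzing:Reiten:2006} for this last step, your proposal matches the paper's own treatment exactly.
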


The members of $C=C(\Hh)$ are called \define{points of $\Hh$}. 

\Ha6{$\Hh$ has infinitely many points}

This completes the requests on $\Hh$. It can be shown, see ~\cite{Lenzing:Reiten:2006}, that (H 5) is implied by the other axioms.

For each object $X$ in $\Hh$ the \define{rank} $\rk(X)$ of $X$ is defined as the length of $X$ in the Serre quotient $\Hh/\Hhnull$. The rank is zero exactly for the members of $\Hhnull$. Among vector bundles, we choose a member of rank one and call it the \define{structure sheaf} $\Oo_C$ of $\Hh$. 

\begin{Proposition}
The endomorphism ring of $\Oo_C$ in the Serre quotient $\Hh/\Hhnull$ is an algebraic function field $k(\Hh)/k$ in one variable, that is, $k(\Hh)$ is a finite algebraic extension of the field $k(X)$ of rational functions in one variable.
\end{Proposition}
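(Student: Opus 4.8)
The plan is to study the endomorphism ring $\Delta := \End_{\Hh/\Hhnull}(\Oo_C)$ through the structure of the Serre quotient $\Hh/\Hhnull$. Since rank is by definition the length in this quotient and $\Oo_C$ has rank one, the image of $\Oo_C$ is a simple object, so by Schur's lemma $\Delta$ is a division $k$-algebra. Recalling that $\Hom_{\Hh/\Hhnull}(A,B)=\varinjlim \Hom_\Hh(A',B/B_0)$ over subobjects $A'\subseteq A$ with $A/A'\in\Hhnull$ and over $B_0\subseteq B$ in $\Hhnull$, one identifies $\Delta$ with the directed colimit of the groups $\Hom_\Hh(L',L'')$, where $L',L''$ range over the line bundles contained in, respectively containing, $\Oo_C$ with finite-length cokernel; that is, $\Delta$ is the ring of ``rational sections'' of $\Oo_C$. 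Because $\Hh$ is connected by (H~1), I expect the associated curve to be irreducible and hence $\Delta$ to be a field rather than a product of fields.

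First I would pin down the structure of $\Hh/\Hhnull$. A nonzero morphism between line bundles is injective with finite-length cokernel, hence an isomorphism in the quotient; using a degree function together with a Riemann--Roch estimate to produce such maps after twisting, all line bundles become isomorphic, so $\Oo_C$ is, up to isomorphism, the unique simple object. Next I would show $\Ext^1_{\Hh/\Hhnull}(\Oo_C,\Oo_C)=0$: by Serre duality (H~3) one has $\Ext^1_\Hh(L',L'')\cong D\Hom_\Hh(L'',\tau L')$, and a degree comparison shows this vanishes once the twists are sufficiently positive, so every class dies in the colimit. Thus $\Hh/\Hhnull$ is semisimple with a single simple object and is equivalent to the category of finite-dimensional $\Delta$-vector spaces.

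To identify $\Delta$ I would pass to the orbit algebra $R=\bigoplus_{n\ge 0}\Hom_\Hh(\Oo_C,\tau^{-n}\Oo_C)$, with multiplication given by $\tau$-twisted composition. Here $R_0=\End_\Hh(\Oo_C)=k$, since a nonzero endomorphism of the line bundle $\Oo_C$ is injective and $\End_\Hh(\Oo_C)$ is therefore a finite-dimensional domain over the algebraically closed field $k$. Because $\chi_\XX<0$, the class $\tau^{-1}$ has positive degree, so Riemann--Roch forces $\dim_k R_n$ to grow linearly in $n$; hence $R$ is a finitely generated graded domain of Krull dimension two, its graded fraction field has transcendence degree two, and $\Delta$ is the degree-zero component of that field. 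This yields that $\Delta$ has transcendence degree one over $k$ and is finitely generated, which is exactly what is needed for an algebraic function field in one variable, provided commutativity is known.

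The last and genuinely delicate point is commutativity of $\Delta$. Here I would argue that the center $Z$ of $\Delta$ is itself a function field in one variable over $k$ and that $\Delta$ is finite-dimensional over $Z$; Tsen's theorem, applied over the algebraically closed field $k$, then gives a trivial Brauer group of $Z$, forcing $\Delta=Z$. The main obstacle is therefore twofold: establishing the degree function and the Riemann--Roch estimate intrinsically from the axioms (H~1)--(H~6) and the orbifold Euler characteristic, and controlling the size of $\Delta$ well enough to guarantee that it is finite over its center so that Tsen applies. Once $\Delta=Z$ is commutative, the finite generation and transcendence degree one identify $k(\Hh)=\Delta$ as a finite algebraic extension of a rational function field, as claimed.
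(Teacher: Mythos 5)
Your overall strategy --- pass to the Serre quotient, note that $\Oo_C$ becomes simple there so its endomorphism ring $\Delta$ is a division algebra, and kill noncommutativity with Tsen's theorem --- is the standard one; note that the paper itself gives no proof of this Proposition but defers to \cite{Lenzing:Reiten:2006}, so your attempt has to stand on its own. As written it is both too special and circular. The Proposition is asserted for an arbitrary (HNC) satisfying (H~1)--(H~6); at this point in the paper no degree function, no slope, no Riemann--Roch theorem and no Euler characteristic are available for $\Hh$. Those tools are constructed only for $\coh(\XX)$ with $\XX$ a weighted projective curve (Appendix~\ref{app:2}), and the passage from an abstract $\Hh$ to such an $\XX$ goes through precisely this Proposition: one needs the function field in order to invoke valuation theory and obtain the curve $C(\Hh)$ and its weights. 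Your argument invokes degree, Riemann--Roch estimates and the hypothesis $\chi_\XX<0$ at every stage, so it presupposes what is to be proved; moreover $\chi_\XX<0$ is not a hypothesis of the statement. This dependence is not cosmetic: for $\Hh=\coh(E)$ with $E$ an elliptic curve one has $\tau\cong\mathrm{id}$, the orbit algebra is $k[t]$, of Krull dimension one, and your transcendence-degree count collapses, so the method cannot be saved merely by ``establishing the degree intrinsically.'' (There is also a sign slip: with the paper's convention $\mu(\tau A)=\mu(A)-\bar{a}\chi_\Hh$, for negative Euler characteristic it is $\tau$, not $\tau^{-1}$, that raises slope, so your $R_n=\Hom(\Oo_C,\tau^{-n}\Oo_C)$ vanishes for all $n>0$; the paper's orbit algebra uses $\tau^{n}$.)

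The second, independent, gap is the Tsen step, which you correctly identify as the crux but do not carry out. Tsen's theorem requires two inputs: that $\Delta$ is finite-dimensional over its center, and that the center is a function field in one variable over $k$. You propose to extract these from commutative-algebra invariants of the orbit algebra $R$ (Krull dimension two, graded fraction field of transcendence degree two), but $R$ is not known to be commutative at this stage: in the paper, commutativity of $R$ (Proposition~\ref{prop:Picard}) is itself deduced from the commutativity of the function field of a smooth projective curve, i.e.\ from this very Proposition. So the appeal to Krull dimension and fraction fields is again circular; an honest version of your route would need noncommutative substitutes (GK-dimension, Ore localization) together with a genuinely hard input such as the Artin--Stafford theorem on graded domains of GK-dimension two, or else the intrinsic analysis of \cite{Lenzing:Reiten:2006}. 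In short, the skeleton is the right one, but both load-bearing steps --- Riemann--Roch data intrinsic to $\Hh$, and finiteness of $\Delta$ over a one-variable central subfield --- are missing.
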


Classical valuation theory, see \cite{Chevalley:1951}, associates to such a function field a smooth projective curve $C$ whose points are in one-to-one correspondence with the discrete valuations of $k(\Hh)/k$. It is not difficult to show that these are in one-to-one correspondence with the members of $C(\Hh)$. In this way, $C(\Hh)$ becomes a smooth projective curve, equipped with the weight function $w$ from Proposition~\ref{prop:splitting}.

An important tool in investigating $\Hh$ is the \define{Euler form}, a bilinear form on the Grothendieck group $\Knull(\Hh)$ modulo short exact sequences. It is defined on classes of objects by the expression
\begin{equation*}
\euler{[E]}{[F]}=\dim_k\Hom(E,F)-\dim_k\Ext^1(E,F).
\end{equation*}

We are now going to describe how to get back from $\coh(\XX)$ to $\coh(X)$, where $x_1,x_2,\ldots,x_t$ are the weighted points and $w_1,w_2,\ldots, w_t$ are their weights. One uses that for each $\tau$-orbit of simple sheaves concentrated in $x_i$, $i=1,\ldots,t$, the structure sheaf $\Oo_{\XX}$ admits a non-zero homomorphism to exactly one member of each  $\tau$-orbit $(\tau^jS_{x_i})$, $j\in\ZZ_{w_i}$. 

We may assume that $\Hom(\Oo_\XX,\Ss_{x_i})=k$  for $i=1,\ldots,t$. We then form the finite set $\frak{S}$ of all simple sheaves $S$ with $\Hom(\Oo_\XX,S)=0$. 

The full subcategory of $\coh(\XX)$, right perpendicular to $\frak{S}$ is then (equivalent to) the original category $\coh(X)$ of coherent sheaves on the underlying smooth projective curve, we started with, compare~\cite{Geigle:Lenzing:1991}. By construction, the simple sheaves $S_{x_i}$ with $\Hom(\Oo_{\XX},S_{x_i})=k$, $i=1,\ldots,t$, belong to $\frak{S}^\perp=\coh(X)$. The points $x_1,\ldots,x_t$, together with their weights $w_1,\ldots,w_t$, then define a (WPC) $(X,w)$.

\subsection{Graded isolated Gorenstein singularities}
By a \define{positively $\ZZ$-graded $k$-algebra $R=\bigoplus_{n\geq 0}R_n$} we understand a commutative algebra $R$ which is affine, hence finitely generated as a $k$-algebra. We assume that the homogeneous components $R_n$ are finite dimensional over $k$ and satisfy the usual grading condition $R_n\cdot R_m\subseteq R_{n+m}$ for all $n,m\geq0$. We further request that  $R_0=k$, an assumption implying that $R$ is \define{graded local}, that is, has a unique maximal graded ideal $\mM=\bigoplus_{n>0}R_n$ and \define{residue class field} $R/\mM=k$.

A graded algebra $R$, as above, is said to be \define{graded isolated singularity} if for each homogeneous prime ideal $\pP$, different from the maximal ideal $\mM$, the $\ZZ$-graded localization $S=R_{\pP}$ has finite global dimension. We note that the homogeneous component $S_n$ consists of all fractions $a/b$ with $a,b$ homogeneous, say of degree $r$,$s$ respectively, with $b\notin\pP$, and such that $r-s=n$.

\begin{definition}
A positively $\ZZ$-graded commutative noetherian algebra $R$ is called \define{graded Gorenstein} if its injective dimension, calculated in the category  $\Modgr\ZZ{R}$ of all graded $R$-modules, is finite.
\end{definition}
This dimension then agrees with the \define{graded Krull dimension} $n$ of $R$, which may be defined as the maximal length of a proper chain of graded prime ideals
\begin{equation*}
0\subseteq\pP_{0}\subset \pP_1\subset\cdots \subset \pP_{n-1}\subset\pP_n=\mM
\end{equation*}
in $R$.

An important invariant for a $\ZZ$-graded Gorenstein algebra $R$ of Krull dimension $n$ is the \define{Gorenstein parameter}. To determine this invariant we consider a minimal graded injective resolution in $\Modgr{\ZZ}{R}$
\begin{equation*}
0\to R\to E^0\to E^1 \to \cdots \to E^{n-1}\to E^n\to 0
\end{equation*}
by $\ZZ$-graded injective modules $E^i$, $i=0,\ldots,n$. If $E^n$ is isomorphic to the $a$-fold degree shift of the graded injective hull $E(k)$ of the $R$-module $k=R/\mM$, then $a$ is called the \define{Gorenstein parameter} of $R$. To have Gorenstein parameter $a$ we thus request that the $n$-th cosyzygy of $R$ equals $E(k(a))=(E(k)(a)$.

In case of a graded complete intersection $R=k[x_1,x_2,\ldots,x_t]/(f_1,f_2\ldots,f_r)$, where $R$ is positively graded by assigning $x_1,x_2,\ldots,x_t$ positive degrees $d_1,d_2,\ldots,d_t$ and where the homogeneous polynomials $f_1,f_2\ldots,f_r$ of degrees $h_1,h_2,\ldots,h_r$ are assumed to form a homogenous regular sequence, the Gorenstein parameter $a$ of $R$ is determined by the expression 
\begin{equation*}
a=\sum_{j=1}^t d_j-\sum_{i=1}^r h_i.
\end{equation*}
In particular, a graded hypersurface $R=k[x_1,x_2,x_3]/(f)$ of Krull dimension 2, is graded Gorenstein of Gorenstein parameter $\sum_{i=1}^3\deg(x_i)-\deg(f)$.

\subsubsection{Graded Gorenstein isolated singularities of Krull dimension two and weighted projective curves}
We next describe how graded Gorenstein isolated singularities of \emph{Krull dimension two} relate to weighted projective curves.

\begin{Proposition} \label{prop:gor1}
Let $R$ be a graded Gorenstein isolated singularity of Krull dimension two and Gorenstein parameter $a$. Then $R$ is graded integral, that is, $x\cdot y=0$ for homogeneous elements implies $x=0$ or $y=0$. Moreover, sheafification of $R$, say by Serre construction, yields a hereditary noetherian $k$-category (HNC) with Serre duality 
\begin{equation}
\Hh=\frac{\modgr\ZZ{R}}{\modgrnull\ZZ{R}},
\end{equation}
where the AR-translation is induced by degree shift $X\mapsto X(a)$.\end{Proposition}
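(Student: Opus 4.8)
The plan is to establish the two assertions separately: first that $R$ is a graded domain, and then that the Serre quotient $\Hh=\modgr{\ZZ}{R}/\modgrnull{\ZZ}{R}$ satisfies the axioms (H~1)--(H~6) together with Serre duality realized by the degree shift $X\mapsto X(a)$.

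\textbf{Integrality.} Since $R$ is graded Gorenstein it is in particular Cohen--Macaulay, hence satisfies Serre's condition $S_2$. The hypothesis that $R$ is a graded isolated singularity means that the $\ZZ$-graded localization $R_{\pP}$ has finite global dimension, hence is regular, for every homogeneous prime $\pP\neq\mM$; as $\mM$ is the only prime of height two, this shows that $R$ is regular in codimension one, i.e.\ satisfies $R_1$. By Serre's normality criterion $R$ is therefore normal. Because idempotents of a graded ring are homogeneous of degree zero, the only idempotents lie in $R_0=k$ and are thus $0$ and $1$, so $R$ is connected; a connected normal noetherian ring is a domain. Finally, for a $\ZZ$-graded ring the leading homogeneous components multiply to the leading component of a product, so the absence of homogeneous zero divisors is equivalent to $R$ being integral; thus $R$ is graded integral.

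\textbf{The category $\Hh$ and the easy axioms.} The quotient of the abelian category $\modgr{\ZZ}{R}$ by the Serre subcategory $\modgrnull{\ZZ}{R}$ of finite length modules is again an abelian $k$-category; this Serre construction realizes the category of coherent sheaves on $\mathrm{Proj}(R)$, together with the weighted structure carried by the grading. I would verify the axioms as follows. Noetherianity of objects and connectedness (H~1) descend from the noetherianity and connectedness of $R$. Ext-finiteness (H~2) is the standard finiteness of $\Hom$ and $\Ext^1$ for coherent sheaves on a projective scheme of finite type over $k$. The image of $R$ itself has infinite length, giving (H~4); and since $k$ is algebraically closed and $\mathrm{Proj}(R)$ has dimension one, it has infinitely many closed points, yielding (H~6). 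For heredity I would use that the isolated singularity hypothesis makes every local ring of $\mathrm{Proj}(R)$ regular of dimension one, so that $\Ext^i$ vanishes on $\Hh$ for $i\geq2$; equivalently $\Hh$ has global dimension one. Axiom (H~5) then follows from the remaining ones, as noted after the list of axioms.

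\textbf{Serre duality --- the crux.} The essential point, and the main obstacle, is to produce the self-equivalence $\tau$ of $\Hh$ with $\Ext^1(X,Y)\cong D\Hom(Y,\tau X)$ and to identify it with the degree shift $X\mapsto X(a)$. For this I would invoke graded local duality for the Gorenstein ring $R$ of Krull dimension $n=2$: the vanishing of local cohomology $H^i_\mM(R)$ for $i\neq n$ together with the defining isomorphism $H^n_\mM(R)\cong E(k)(a)$ encode the Gorenstein parameter $a$, and Matlis duality yields a graded canonical module which is a degree shift of $R$. Passing to $\mathrm{Proj}$ by means of the comparison between graded local cohomology and cohomology in $\Hh$ --- the four term sequence relating $H^0_\mM$, $H^1_\mM$ and global sections, together with the isomorphisms $H^{i+1}_\mM(M)\cong\bigoplus_{j}\Ext^i_{\Hh}(\Oo,\widetilde M(j))$ for $i\geq1$ --- this canonical module becomes the dualizing sheaf $\omega$ on $\Hh$. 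Serre duality on the one dimensional category $\Hh$ then reads $\Ext^1(X,Y)\cong D\Hom(Y,X\otimes\omega)$, and the identification of $\omega$ with a degree shift coming from local duality shows that the Auslander--Reiten translation $\tau$ is exactly $X\mapsto X(a)$, since tensoring with $\omega$ on $\Hh$ is induced by the degree shift on $\modgr{\ZZ}{R}$. The careful bookkeeping of this shift --- matching the sign and direction dictated by the chosen conventions for the Gorenstein parameter and for graded Matlis duality --- is the delicate part; the remaining verifications are formal.
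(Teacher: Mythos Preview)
Your proposal is correct and follows essentially the same strategy as the paper: heredity of $\Hh$ comes from the isolated-singularity hypothesis making all graded localizations $R_\pP$ (equivalently, all stalks on the graded $\mathrm{Proj}$) regular of dimension one, noetherianity and the remaining axioms descend from $R$, and Serre duality with shift $X\mapsto X(a)$ comes from the Gorenstein hypothesis. Two points of contrast are worth noting. First, you actually \emph{prove} graded integrality via Serre's criterion ($S_2$ from Cohen--Macaulay, $R_1$ from the isolated-singularity assumption, hence normal; connectedness from $R_0=k$), whereas the paper's proof simply does not address this claim at all --- so your argument fills a genuine gap in the paper's exposition. Second, for Serre duality the paper just cites Orlov, while you unpack the mechanism through graded local duality and the comparison between $H^\bullet_\mM$ and sheaf cohomology; this is the standard argument underlying that citation, and your sketch is accurate, including the caution about sign conventions for the shift.
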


\begin{proof}
 Following the model of \cite{Geigle:Lenzing:1987}, see also \cite{Canonaco:2000,Canonaco:2003}, we construct a $\ZZ$-graded sheaf theory from $R$ on the $\ZZ$-graded (maximal) projective spectrum $X$, consisting of the non-zero homogeneous non-maximal prime ideals $\pP$ of $R$. There results a structure sheaf $\Oo$ on $X$ whose stalks are the graded localizations $R_\pP$, defined by means of homogeneous quotients $a/b$ with $a\in R$ and $b\notin \pP$. The stalks are graded regular local of Krull dimension one and hence are graded discrete valuation rings. Since all stalks have global dimension one, it follows that the abelian category $\Hh=\coh(X)$ of graded coherent sheaves has global dimension one. Moreover, since $R$ is graded noetherian, the category $\Hh$ is also noetherian. Finally, since $R$ is graded-Gorenstein, say of Gorenstein parameter $a$, the category $\Hh$ has Serre duality in the form $\Ext^1(A,B)=\Hom(B,A(a))$, where $A(a)$ denotes the degree shift of $A$ with the Gorenstein parameter $a\in\ZZ$, see for instance \cite{Orlov:2009}.

Alternatively, again following \cite{Geigle:Lenzing:1987}, $\Hh$ can be obtained by Serre construction as the abelian quotient category  of the category $\modgr\ZZ{R}$ of finitely generated graded $R$-modules modulo its Serre subcategory $\modgrnull\ZZ{R}$ of graded $R$-modules of finite length.

It is left to the reader to check that $\Hh$ fulfills all remaining requirements for a hereditary noetherian category (HNC) as introduced above. It then follows that $X=C(\Hh)$ carries the structure of a (WPC).
\end{proof}

\subsubsection{Gorenstein parameter $-1$}
Next, we are going to show how, conversely, each (HNC) $\Hh$ of \emph{negative orbifold Euler characteristic} gives rise to a $\ZZ$-graded Gorenstein isolated singularity of Krull dimension two and Gorenstein parameter -1. For this, we interpret $\Hh$ as the category of coherent sheaves $\coh(\XX)$ on a (WPC) $\XX=(X,w)$. Let $\Oo_\XX$ be the structure sheaf of $\XX$ and let $\tau:\Hh\to\Hh$ be the Auslander-Reiten translation of $\coh(\XX)$. Following the model of \cite{Lenzing:1994} we form the \define{$\ZZ$-graded orbit-algebra of $\tau$ on $\Oo_\XX$}:
\begin{equation}
R=\bigoplus_{n\geq0}\Hom(\Oo_\XX,\tau^n\Oo_\XX) \text{ with multiplication } r_n\cdot r_m=[\tau^m (r_n)    \circ r_m].
\end{equation}
From the definition of the multiplication it is easily shown that $R$ is an associative, positively $\ZZ$-graded algebra. To show that $R$ is also commutative, we need the concept of the \define{point shift automorphism} $\si_x$ with $x\in\XX$, compare~\cite[Section 10.3]{Lenzing:2007}.

For each $x\in\XX$ there is an automorphism $\si_x$ of $\coh(\XX)$ which on vector bundles $E$ is given by the universal extension $E\mapsto E(x)$
{\begin{equation} \label{eq:universal_extension}
0 \to E \up{x_E} E(x) \to  E_x \to 0,  \text{ where }E_x=\bigoplus_{j\in\ZZ_{w(x)}}\Ext^{1}(\tau{}^j S_x,E)\otimes_k
\tau{}^j S_x.
\end{equation}
Obviously, each $\si_x$, $x\in\XX$, induces the identity map on $\XX=C(\Hh)$. By definition, all the $\si_x$, $x\in\XX$, generate the Picard group $\Pic(\XX)$ which is a commutative group acting transitively on isomorphism classes of line bundles~\cite{Lenzing:2018}.

\begin{Proposition}\label{prop:Picard}
The Auslander-Reiten translation $\tau$ of $\coh(\XX)$ is a member of the \define{Picard group} $\Pic(\XX)$. Further, the orbit algebra $R$ is commutative.
\end{Proposition}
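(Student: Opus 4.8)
The plan is to prove the two assertions in order, obtaining the commutativity of $R$ as a formal consequence of the membership $\tau\in\Pic(\XX)$. For the first assertion I would begin by checking that $\tau$ sends line bundles to line bundles. Being a self-equivalence, $\tau$ preserves the intrinsically defined subcategories $\Hhnull$ (finite length objects) and $\Hhplus$ (objects without simple subobject); hence it descends to a length-preserving auto-equivalence of the Serre quotient $\Hh/\Hhnull$ and therefore preserves rank. In particular $\tau\Oo_\XX$ lies in $\Hhplus$ and has rank one, so $L:=\tau\Oo_\XX$ is a line bundle.

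Next I would use the transitivity of the $\Pic(\XX)$-action on line bundles to choose $\pi\in\Pic(\XX)$ with $\pi\Oo_\XX\cong\tau\Oo_\XX$, and set $\rho=\pi^{-1}\tau$, a self-equivalence with $\rho\Oo_\XX\cong\Oo_\XX$. Since $\tau$ is the Serre functor it commutes, up to natural isomorphism, with every auto-equivalence of $\Hh$, and since $\Pic(\XX)$ is abelian $\pi$ commutes with each point shift $\si_x$; hence $\rho$ commutes with all the $\si_x$. As the $\si_x$ generate $\Pic(\XX)$ and act transitively on line bundles, every line bundle has the form $\si\Oo_\XX$ for some $\si\in\Pic(\XX)$, and $\rho(\si\Oo_\XX)\cong\si\rho\Oo_\XX\cong\si\Oo_\XX$, so $\rho$ fixes every line bundle. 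A rigidity argument then identifies $\rho$ with the identity functor: every vector bundle is filtered by line bundles, while every torsion object is determined by the points of its support, all of which $\rho$ fixes. This yields $\tau=\pi\in\Pic(\XX)$. I expect this second step — in particular the passage from ``$\rho$ fixes all objects up to isomorphism'' to ``$\rho\cong\mathrm{id}$ as a functor'', together with the care needed at the weighted points — to be the main obstacle; the rank computation is routine by comparison.

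With $\tau\in\Pic(\XX)$ in hand, the commutativity of $R$ becomes the statement that a section ring is commutative. Writing $\tau\cong(-\otimes L)$ for the invertible object $L=\tau\Oo_\XX$, one fixes coherent isomorphisms $\tau^n\Oo_\XX\cong L^{\otimes n}$ so that the homogeneous component $R_n=\Hom(\Oo_\XX,\tau^n\Oo_\XX)$ is identified with the space of global sections $\Hom(\Oo_\XX,L^{\otimes n})$. Under this identification $\tau^m(r_n)$ is the morphism $r_n\otimes\mathrm{id}_{L^{\otimes m}}\colon L^{\otimes m}\to L^{\otimes n}\otimes L^{\otimes m}$, so the orbit multiplication reads
\[
r_n\cdot r_m=\tau^m(r_n)\circ r_m=(r_n\otimes\mathrm{id}_{L^{\otimes m}})\circ r_m=r_n\otimes r_m\colon\Oo_\XX\to L^{\otimes(n+m)}.
\]
The canonical symmetry $L^{\otimes n}\otimes L^{\otimes m}\cong L^{\otimes m}\otimes L^{\otimes n}$, supplied by the commutativity of $\Pic(\XX)$, carries $r_n\otimes r_m$ to $r_m\otimes r_n$, whence $r_n\cdot r_m=r_m\cdot r_n$. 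Thus $R$ is the commutative section ring $\bigoplus_{n\geq0}\Hom(\Oo_\XX,L^{\otimes n})$, which completes the proof.
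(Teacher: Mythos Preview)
Your argument for the first assertion follows the paper's route: choose $\pi\in\Pic(\XX)$ with $\pi\OoX\cong\tau\OoX$ and show $\rho=\pi^{-1}\tau$ is the identity. The paper is terser: it observes directly that $\rho$ fixes $\OoX$ and fixes every point of $\XX$ (since both $\tau$ and each $\si_x$ do), hence comes from an automorphism of the curve fixing all points, which is the identity. Your detour through ``$\rho$ commutes with $\Pic$, hence fixes every line bundle'' is correct but not needed; the rigidity step you flag as the main obstacle is left equally implicit in the paper.

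For commutativity the two proofs genuinely diverge. You identify $\tau$ with $(-\otimes L)$ and reduce $R$ to a section ring, using the symmetry of the monoidal structure. This is conceptually clean, but it presupposes a symmetric tensor product on $\coh(\XX)$ and the identification of elements of $\Pic(\XX)$ with tensor functors, neither of which the paper sets up. The paper instead passes to the Serre quotient $\Hh/\Hhnull$: the universal-extension description~\eqref{eq:universal_extension} shows each $\si_x$, and hence $\tau$, becomes naturally isomorphic to the identity there, so both products $r_n\cdot r_m$ and $r_m\cdot r_n$ map to the same element of the commutative function field $K=\End_{\Hh/\Hhnull}(\OoX)$; since nonzero maps between line bundles remain nonzero in $\Hh/\Hhnull$, the equality lifts back to $R$. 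Your approach buys a cleaner picture once the tensor formalism is available; the paper's is more self-contained within the framework it has actually built.
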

\begin{proof}
Since $\Pic(\XX)$ acts transitively on line bundles, there exists $\si\in\Pic(\XX)$ with $\tau(\OoX)=\si(\OoX)$. Since $\si^{-1}\tau$ fixes $\OoX$, it is an automorphism of $\XX$ which additionally fixes each point of $\XX$, and hence is (isomorphic to) the identity, and $\tau=\si$ follows. This proves the first assertion.

Now, $\tau$ is a finite product of shift functors $\si_x$ or their inverses $\si_x^{-1}$. Passing to the quotient category $\Hh/\Hhnull$, each shift functor $\si_x$ is isomorphic to the identity functor as follows from the functorial construction \eqref{eq:universal_extension}. This property extends to the product $\tau$. We may thus assume that $\tau=Id$ on $\Hh/\Hh_0$, and correspondingly $\tau^n\OoX=\OoX$ in $\Hh/\Hh_0$. Passing with the two products $v_n\circ \tau^m(u_m)$ and $u_m\circ\tau^n(u_n)$ from $\Hom(\OoX,\tau^{n+m}\OoX)$ to $\Hh/\Hh_0$, they simplify to the expressions $v_n\circ u_m$ and $u_m\circ v_n$ with $v_m,u_n$ lying in the endomorphism ring $K$ of $\OoX$ in the category $\Hh/\Hh_0$. As the function field of a smooth projective curve, $K$ is commutative, the identity $v_n\circ u_m=u_m\circ v_n$ in $K$ thus descends to the equality $v_n\circ \tau^m(u_m)=u_m\circ\tau^n(u_n)$ in $\Hom(\OoX,\tau^{n+m}\OoX)$, which establishes the claimed commutativity of $R$.
\end{proof}

In order to establish further properties of $R$ we prove an important property of the pair $(\OoX,\tau)$:

\begin{Proposition} \label{prop:ample}
If $\Hh=\coh(\XX)$ has negative Euler characteristic, the pair
$(\Oo_\XX,\tau)$ is \define{ample} in $\Hh$ in the sense of \cite{Artin:Zhang:1994}, meaning that the following two conditions are satisfied:
\begin{enumerate}[(i)]
\item For each $H$ in $\Hh$ there exists an epimorphism $\bigoplus_{i=1}^r\tau^{-n_i}\Oo_\XX\to H$ with exponents $n_i>0$.
\item For every epimorphism $H\to K$ in $\Hh$ the induced map $\Hom(\tau^{-N}\OoX,H) \to \Hom(\tau^{-N}\OoX,K)$ is surjective for large $N$.
\end{enumerate}
\end{Proposition}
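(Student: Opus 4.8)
The plan is to reduce everything to a single positivity statement about the line bundle $\omega:=\tau\OoX$ and then to run the two standard cohomological arguments (global generation and Serre vanishing) inside $\coh(\XX)$. By Proposition~\ref{prop:Picard} the functor $\tau$ lies in $\Pic(\XX)$, so it is the exact autoequivalence obtained by twisting with $\omega$, which by Serre duality (H~3) is the dualizing sheaf of $\XX$. Since $\tau$ is an equivalence, for every $N$ there is a natural identification $\Hom(\tau^{-N}\OoX,H)=\Hom(\OoX,\tau^{N}H)$, and applying the exact functor $\tau^{N}$ carries the evaluation map $\Hom(\tau^{-N}\OoX,H)\tsr_k\tau^{-N}\OoX\to H$ to the evaluation $\Hom(\OoX,\tau^{N}H)\tsr_k\OoX\to\tau^{N}H$. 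Thus condition~(i) (for the vector-bundle part, with one large exponent) becomes the assertion that $\tau^{N}H$ is generated by its global sections, while condition~(ii) becomes the surjectivity of $\Hom(\OoX,\tau^{N}H)\to\Hom(\OoX,\tau^{N}K)$.

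The whole argument then rests on one input: by the Riemann--Roch formula for $\coh(\XX)$ (Appendix~\ref{app:2}) together with Theorem~\ref{thm:OrbifoldEulerChar}, the hypothesis $\chi_\XX<0$ is equivalent to $\deg\omega>0$, i.e.\ to $\omega$ being ample. Quantitatively, writing $\mu_{\min}$ for the smallest slope in the Harder--Narasimhan filtration, twisting by $\tau$ shifts slopes by $\deg\omega$, so that for any fixed coherent sheaf $G$ one has $\mu_{\min}(\tau^{N}G)=\mu_{\min}(G)+N\deg\omega\to+\infty$ as $N\to\infty$. This is the mechanism forcing the required generation and vanishing once $N$ is large.

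For condition~(ii), given an epimorphism $H\to K$ with kernel $F$, I would apply $\tau^{N}$ and then $\Hom(\OoX,-)$ to $0\to\tau^{N}F\to\tau^{N}H\to\tau^{N}K\to 0$; the only obstruction to surjectivity is the term $\Ext^{1}(\OoX,\tau^{N}F)$. By Serre duality this equals $D\Hom(\tau^{N}F,\omega)=D\Hom(F,\tau^{-(N-1)}\OoX)$, a space of maps from the fixed sheaf $F$ into a line bundle of degree $-(N-1)\deg\omega$. As soon as this degree drops below $\mu_{\min}(F)$ there is no nonzero such morphism---its image would be a quotient line bundle of $F$ of slope below $\mu_{\min}(F)$---so the obstruction vanishes and~(ii) follows. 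Condition~(i) is handled in the same spirit: for $H$ a vector bundle, $\tau^{N}H$ is globally generated once $\Ext^{1}(\OoX,\si_x^{-1}\tau^{N}H)=0$ for every point $x$, which again follows from the slope estimate above, uniformly in $x$ because only finitely many points carry a nontrivial weight; for $H$ of finite length one covers each simple $S$ directly, since $\Hom(\tau^{-n}\OoX,S)\neq0$ for a suitable $n>0$ and any nonzero such map is onto. Allowing different exponents $n_i$, one then assembles a surjection onto an arbitrary $H$ from its torsion part and its torsion-free quotient.

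I expect the genuine difficulty---and the main obstacle---to lie not in these cohomological manipulations but in having the degree/slope calculus, and the attendant Serre vanishing, available \emph{intrinsically} on the weighted curve, where the endomorphism field $k(\Hh)$ need not equal $k$ and the finitely many weighted points must be controlled simultaneously. The cleanest way to secure these tools is to transport the estimates from the underlying smooth projective curve $X$ through the perpendicular identification $\coh(X)=\frak{S}^{\perp}\subseteq\coh(\XX)$ of Section~\ref{sect:HNC}, where classical Riemann--Roch and Serre vanishing are in force, and then to track the bounded correction coming from the weighted points.
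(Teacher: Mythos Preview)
Your argument for condition~(ii) is correct and is essentially the paper's: both reduce to the vanishing of $\Ext^1(\tau^{-N}\OoX,F)$ for the kernel $F$, which follows via Serre duality and the slope shift $\mu(\tau A)=\mu(A)-\bar{a}\chi_\XX$ (the paper's item~(b), your $\deg\omega>0$). Your use of $\mu_{\min}$ via Harder--Narasimhan and the paper's use of a line-bundle filtration are interchangeable here.

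For condition~(i), however, your plan has a genuine gap. You propose to show that for a vector bundle $H$ a \emph{single} large exponent $N$ works, i.e.\ that $\tau^N H$ is globally generated by $\OoX$. This is false on a genuinely weighted curve. At a weighted point $x_i$ of weight $a_i$, the structure sheaf $\OoX$ maps nontrivially to only one simple $S_i$ in the $\tau$-orbit; if $\tau^N H$ has $\tau^j S_i$ with $j\not\equiv 0\pmod{a_i}$ as a simple quotient, then every map $\OoX\to\tau^N H$ composes to zero with $\tau^N H\twoheadrightarrow\tau^j S_i$, so the evaluation map cannot be surjective. Concretely, already $H=\OoX$ with $N\not\equiv0\pmod{a_i}$ fails; and for $H=\OoX\oplus\tau\OoX$ (with $a_i\geq2$) \emph{no} choice of $N$ works, since $\tau^N H$ always has both $S_i$ and $\tau S_i$ among its simple quotients at $x_i$. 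Thus your stated criterion $\Ext^1(\OoX,\si_x^{-1}\tau^N H)=0$ is not sufficient at weighted points, and different exponents $n_i$ are unavoidable.

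The paper circumvents this by an extension lemma (item~(d) in its proof): one first handles simple sheaves, then finite-length objects by induction, then line bundles $L$ by producing a single nonzero map $\tau^{-n}\OoX\to L$ (Riemann--Roch) whose cokernel has finite length, and finally arbitrary bundles via a line-bundle filtration. Each step introduces its own exponents, which is exactly what the weighted structure forces. Incidentally, your closing worry is misplaced: the slope and degree calculus is available directly on $\XX$ via Appendix~\ref{app:2}, no transport from $X$ is needed; the actual obstacle is the one above.
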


\begin{proof}
Before entering the proof of items $(i)$ and $(ii)$, we recall some properties of $\Hh$.
\begin{itemize}
\item[(a)] Each object $H$ of $\Hh$ splits into a direct sum $H=H_+\oplus H_0$, where $H_0$ has finite length and the vector bundle $H_+$ has a finite filtration by line bundles, compare \cite{Geigle:Lenzing:1987}.
\item[(b)] For each non-zero object $A\in\coh(\XX)$ with $\XX=X\wt{a_1,a_2,\ldots,a_t}$ we obtain the slope identity $\mu(\tau A) =\mu(A)-\bar{a}\chi_\Hh$, compare \cite[Prop. 3.5]{Lenzing:Reiten:2006}\footnote{Due to a different normalization, the factor $\bar{a}$ is missing in \cite{Lenzing:Reiten:2006}}. Here, $\bar{a}=\lcm{a_1,a_2,\ldots,a_t}$.
\item[(c)] Given $H$ in $\Hh$, then $\Ext^1(\tau^{-N}\OoX,H)=0$ holds for large $N$. 
\item[(d)] Assume the sequence $0\to A \up{u} B \to C\to 0$ is exact where $A$ and $C$ satisfy property $(i)$ and, moreover, $C$ has finite length. Then also $B$ satisfies property $(i)$.
\end{itemize}
In order to prove (c) it is sufficient, in view of (a),  to assume that $H$ is either a simple sheaf or a line bundle. Invoking Serre duality, the expression $\Ext^1(\tau^{-N}\OoX,H)=D\Hom(H,\tau^{-N+1}\OoX)$ is zero if $H$ is simple (this needs no assumption on $N$). If $H$ is a line bundle, then we may choose $N$ large enough that $\dg(H)>\dg(\tau^{-N+1}\OoX)$, implying that $\Hom(H,\tau^{-N+1}\OoX)$ is zero for large $N$.

Concerning $(d)$, let $\bigoplus_{i=1}^c\tau^{-n_i}\OoX\up{v}C$ be an epimorphism. Since $\tau$ is periodic on $\Hh_0$, we may choose the $n_i$ large enough that $\Ext^1(\tau^{-n_i}\OoX,A)=D\Hom(A,\tau^{-n_i+1}\OoX)$ equals zero. Hence the map $v$ lifts to a map $\bar{v}:\bigoplus_{i=1}^c\tau^{-n_i}\OoX\to B$ and, clearly, then $(u,\bar{v}):A\oplus \bigoplus_{i=1}^c\tau^{-n_i}\OoX\up{(u,\bar{v})} C$ is an epimorphism. We are done, since $A$ satisfies $(i)$.

We next prove property $(ii)$: The epimorphism $u:H\to K$ yields an exact sequence $0\to N\to H\up{u} K\to 0$. Invoking $(c)$, $\Ext^1(\tau^{-N} \OoX,N)=0$ for large $N$, implying that 
$\Hom(\tau^{-N}\OoX,u)$ is surjective.

Finally we prove property $(i)$, where we first deal with the cases where $H$ either has finite length or where $H$ is a vector bundle. 
Since $\OoX$ maps nontrivially to a member $S$ of each $\tau$-orbit of simples, it is obvious that there exists an epimorphism $\tau^{-n}\OoX\to S$ with $n\geq0$ which deals with the case of a simple sheaf. If $H$ has finite length, we consider a sequence $0\to S \to H \to C \to 0$, where $S$ is simple, and, using induction on the length, apply $(d)$.

If $H=L$ is a line bundle, we invoke negative Euler characteristic and the weighted Riemann-Roch theorem~\ref{thm:WeightedRR} to establish a non-zero morphism $u:\tau^{-n}\OoX\to L$ for some $n>0$. In the resulting exact sequence $0\to \tau^{-n}\OoX\up{u} L\to C\to 0$, the cokernel-term has finite length. Invoking the preceding argument, property $(d)$ shows that $L$ satisfies property $(i)$. 

In view of property $(a)$ this finishes the proof.
\end{proof}

\begin{Theorem}\label{thm:IGS}
Let $\XX=(X,w)$ be a smooth weighted projective curve (WPC) of negative orbifold Euler characteristic $\chi_\XX$ with category $\Hh=\coh(\XX)$ of coherent sheaves and structure sheaf $\OoX$. With respect to the AR-translation of $\Hh$ let
\begin{equation*}
R=\bigoplus_{n\geq0}\Hom(\OoX,\tau^n\OoX)
\end{equation*}
be the positively $\ZZ$-graded orbit algebra of $\tau$ on $\OoX$.

Then $R$ is a commutative affine $k$-algebra which is a graded isolated singularity of Krull dimension two. Morover $R$ is $\ZZ$-graded Gorenstein of Gorenstein parameter $-1$.

Sheafification of $R$ by means of Serre construction or by a $\ZZ$-graded sheaf theory leads back to $\coh(\XX)$. Moreover, the degree shift by $-1$ on $\modgr\ZZ{R}$ induces on $\coh(\XX)$ the AR-translation $\tau$.
\end{Theorem}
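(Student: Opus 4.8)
The plan is to treat commutativity and ampleness as already secured by Propositions~\ref{prop:Picard} and~\ref{prop:ample}, and to run the reconstruction machinery of Artin--Zhang~\cite{Artin:Zhang:1994} on the ample pair $(\OoX,\tau)$. Concretely, I would attach to each $H\in\Hh$ the graded $R$-module $\Gamma(H)=\bigoplus_{n\geq0}\Hom(\OoX,\tau^nH)$ and check that, because $(\OoX,\tau)$ is ample and $\Hh$ is noetherian and $\Hom$-finite, the functor $H\mapsto\Gamma(H)$ descends to an equivalence between $\Hh=\coh(\XX)$ and the Serre quotient $\modgr{\ZZ}{R}/\modgrnull{\ZZ}{R}$. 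This single step delivers two of the assertions at once: sheafification of $R$ returns $\coh(\XX)$, and the autoequivalence $\tau$ is carried to a degree shift, since the bookkeeping $\Gamma(\tau H)_n=\Hom(\OoX,\tau^{n+1}H)=\Gamma(H)_{n+1}$ identifies $\tau$ with a shift by one unit, which in the sign convention of Proposition~\ref{prop:gor1} is the shift by $-1$.

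Next I would establish that $R$ is affine of Krull dimension two. Since $\tau\in\Pic(\XX)$ by Proposition~\ref{prop:Picard}, each $\tau^n\OoX$ is a line bundle and $R_n=\Hom(\OoX,\tau^n\OoX)$ is its space of global sections. By Serre duality $\Ext^1(\OoX,\tau^n\OoX)=D\Hom(\tau^n\OoX,\tau\OoX)$ vanishes once $\dg(\tau^n\OoX)>\dg(\tau\OoX)$, so $\dim_k R_n=\euler{\OoX}{\tau^n\OoX}$ for $n\gg0$; the weighted Riemann--Roch theorem~\ref{thm:WeightedRR} together with the slope identity $\mu(\tau A)=\mu(A)-\bar a\chi_\XX$ (with $\chi_\XX<0$) then shows $\dim_k R_n$ grows linearly in $n$. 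A Hilbert function that is eventually a polynomial of degree one forces the graded Krull dimension to be two. Finite generation follows from ampleness in the usual way, $\tau^{-1}$ being the ample direction, so that a suitable Veronese subalgebra is generated in degree one over finitely many low-degree elements; equivalently $R$ is the section ring of an ample class on the smooth curve $X$, hence a finitely generated normal domain, and noetherianity is then automatic by the Hilbert basis theorem.

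For the isolated singularity property I would argue exactly as in the proof of Proposition~\ref{prop:gor1}: for a homogeneous prime $\pP\neq\mM$ the $\ZZ$-graded localization $R_\pP$ is the stalk at the corresponding point of $\XX=C(\Hh)$, which is a graded discrete valuation ring, hence graded regular of global dimension one. Thus the only prime at which $R$ fails to be regular is the irrelevant ideal $\mM$, which is precisely the condition of being a graded isolated singularity.

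The main obstacle is the graded Gorenstein property together with the exact value $-1$ of the Gorenstein parameter. Here I would first note that $R$, as the section ring of an ample class on the smooth curve $X$, is a normal domain of Krull dimension two, hence Cohen--Macaulay by Serre's criterion. To identify the canonical module I would compute $\omega_R=\Gamma(\omega_\XX)$, where $\omega_\XX$ is the dualizing sheaf characterised by $\tau=(-)\otimes\omega_\XX$; since $\tau\in\Pic(\XX)$ the sheaf $\omega_\XX$ is a line bundle, and the identity $\Gamma(\omega_\XX)_n=\Hom(\OoX,\tau^{n+1}\OoX)=R_{n+1}$ shows that $\omega_R$ is free of rank one, namely $R(-1)$ in the convention of Proposition~\ref{prop:gor1}. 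This proves $R$ graded Gorenstein and, reading the shift through that same convention, pins the parameter to $-1$; the conclusion may also be reached conceptually by invoking the correspondence of Orlov~\cite{Orlov:2009} between Serre duality on $\modgr{\ZZ}{R}/\modgrnull{\ZZ}{R}$ and the graded-Gorenstein structure of $R$, which is the converse direction to Proposition~\ref{prop:gor1}. The delicate point throughout is keeping the degree-shift sign consistent so that the dualizing shift, the parameter, and the identification of $\tau$ with the shift by $-1$ all agree; this is where I would expect to spend the most care.
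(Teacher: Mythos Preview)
Your proof tracks the paper's closely through commutativity (Proposition~\ref{prop:Picard}), ampleness (Proposition~\ref{prop:ample}), the Artin--Zhang reconstruction yielding noetherianity and the Serre-quotient description of $\coh(\XX)$, and the isolated-singularity argument via graded localizations. The Krull-dimension argument differs cosmetically: you use linear growth of the Hilbert function, while the paper simply reads dimension two off the Serre quotient $\modgr\ZZ{R}/\modgrnull\ZZ{R}\simeq\coh(\XX)$; both are fine.

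The substantive divergence is in the Gorenstein step, and here your argument has a gap. Twice you assert that $R$ is ``the section ring of an ample class on the smooth curve $X$'' and derive normality (hence Cohen--Macaulayness) from this. But the line bundles $\tau^n\OoX$ live on the \emph{weighted} curve $\XX$, not on the underlying scheme $X$; in general $\tau\OoX$ does not lie in the perpendicular subcategory $\coh(X)\subset\coh(\XX)$, so classical results about section rings of ample bundles on smooth projective schemes do not apply directly. Your identification $\omega_R=\Gamma(\omega_\XX)$ likewise presupposes a local-duality package that you have not set up in the weighted/stacky setting. (Incidentally, your computation gives $\Gamma(\omega_\XX)_n=R_{n+1}$, i.e.\ $\omega_R\cong R(1)$ in your shift convention, not $R(-1)$; the parameter $-1$ then comes out via $\omega_R\cong R(-a)$, so watch the signs you flagged.)

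The paper sidesteps all of this with a short direct computation: take a two-term injective resolution $0\to\OoX\to\Ii^0\to\Ii^1\to 0$ in $\Qcoh(\XX)$, apply the full graded global-sections functor $\Ga^*=\bigoplus_{n\in\ZZ}\Hom(\tau^{-n}\OoX,-)$, and use Serre duality in $\coh(\XX)$ to identify the cokernel term $\bigoplus_n\Ext^1(\tau^{-n}\OoX,\OoX)$ with $(DR)(-1)=E(k)(-1)$. This simultaneously exhibits injective dimension two and reads off the Gorenstein parameter $-1$ without any appeal to normality, Cohen--Macaulayness, or an a priori description of the canonical module. If you want to salvage your route, you should replace the ``section ring on $X$'' claim by the observation that $\Hom(\OoX,\tau^n\OoX)=0$ for $n<0$ (slope/degree argument), so that $R$ coincides with its saturation $\Ga^*(\OoX)$ and is therefore maximal Cohen--Macaulay over itself; but at that point the paper's injective-resolution argument is both shorter and more transparent.
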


\begin{proof}
In Proposition~\ref{prop:Picard} we have shown that $R$ is commutative. Since the pair $(\OoX,\tau)$ is ample in $\coh(\XX)$ it follows from \cite[Theorem 4.5]{Artin:Zhang:1994} that $R$ is graded noetherian and sheafification of $R$ leads back to $\coh(\XX)$. Since $R_0=k$, $R$ is graded local and a finite system $x_1,x_2,\ldots,x_s$ of generators for the graded maximal ideal $\mM$ of $R$ generates $R$ as a $k$-algebra, which is thus affine over $k$. Serre construction shows that the category $\modgr\ZZ{R}$, hence $R$, has graded Krull dimension two. Using a $\ZZ$-graded sheaf theory --- instead of Serre construction --- shows that the graded localizations $R_\pP$ for non-maximal primes, that is, just the stalks of $\OoX$ are hereditary, which shows that $R$ is graded isolated singularity.

Before showing the claimed Gorenstein properties, we recall the concept of a graded dual: If $M:=\bigoplus_{n\in\ZZ}M_n$ is a graded module, then $DM=\bigoplus_{n\in\ZZ}DM_{-n}$ is called the \define{graded dual} of $M$. Since $R$ is indecomposable graded projective it follows that $DR=\bigoplus_{n\leq0}DR_{-n}$ is indecomposable graded injective, and since $DR_0=k$ is contained in $DR$, it is the graded injective envelope  $E(k)$ of $k$. We next show that $R$ is graded Gorenstein of Gorenstein parameter $-1$. For this we start
with a minimal injective resolution
\begin{equation} \label{eq:injres}
0\to \OoX \to \Ii^0 \to \Ii^1\to 0
\end{equation}
in the Grothendieck category $\Qcoh(\XX)$ of quasi-coherent sheaves. Such a resolution exists since $\Qcoh(\XX)$ has global dimension one because $\coh(\XX)$ has the same property and is additionally noetherian\footnote{Even, if one only knows the noetherian category $\Hh$ of coherent sheaves, the category of quasicoherent sheaves is easily constructed as the Grothendieck category $\Lex(\Hh^{op},\Ab)$ of left exact additive functors on $\Hh^{op}$ with values in the category $\Ab$ of abelian groups, see \cite{Gabriel:1962}.}.
We apply graded global sections $\Ga^*=\bigoplus_{n\in\ZZ}\Hom(\tau^{-n}\OoX,-)$ to \eqref{eq:injres} and obtain exactness of 
\begin{equation}
0\to R \to \Ga^*(\Ii_0)\to \Ga^*(\Ii^1)\to \bigoplus_{n\in\ZZ} \Ext^1(\tau^{-n}\OoX,\OoX)\to 0
\end{equation}
in the category $\Modgr\ZZ{R}$  of all $\ZZ$-graded $R$-modules. Due to Serre duality the Ext-term evaluates to $\bigoplus_{n\in\ZZ} D\Hom(\OoX,\tau^{-(n-1)}\OoX)$, the graded dual $(DR)(-1)$ of $R(-1)$, thus the graded injective envelope of $k(-1)$. This establishs that $R$ has graded injective dimension two with Gorenstein parameter $-1$.
\end{proof}

\begin{Corollary}[Uniqueness]
Assuming negative orbifold Euler characteristic of $\XX$, a
positively $\ZZ$-graded isolated Gorenstein singularity $S$ of Krull dimension two and Gorenstein parameter $-1$ is isomorphic to the orbit algebra $R=\bigoplus_{n\geq0}\Hom(\Oo_\XX,\tau^n\Oo_\XX)$ if and only if its associated category of coherent sheaves is equivalent to $\coh(\XX)$.
\end{Corollary}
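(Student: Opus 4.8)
The plan is to prove the two implications separately: the forward direction is routine, while the reverse direction carries the content, namely recovering $S$ from its sheafification and transporting that reconstruction along the given equivalence. Suppose first that $S\cong R$ as $\ZZ$-graded algebras. Then the induced equivalence $\modgr\ZZ{S}\simeq\modgr\ZZ{R}$ preserves the Serre subcategories of finite length modules, hence descends to an equivalence of Serre quotients $\modgr\ZZ{S}/\modgrnull\ZZ{S}\simeq\modgr\ZZ{R}/\modgrnull\ZZ{R}$. By Theorem~\ref{thm:IGS} the right-hand side is equivalent to $\coh(\XX)$, so the category of coherent sheaves associated to $S$ is equivalent to $\coh(\XX)$.

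For the converse, let $\Phi\colon\Hh_S\to\coh(\XX)$ be an equivalence, where $\Hh_S=\modgr\ZZ{S}/\modgrnull\ZZ{S}$ is the sheafification provided by Proposition~\ref{prop:gor1}, with structure sheaf $\Oo_S$ (the image of $S$) and Auslander--Reiten translation $\tau_S$ induced by the degree shift by the Gorenstein parameter $-1$. The first step is to reconstruct $S$ from this data. Exactly as in the proof of Theorem~\ref{thm:IGS}, I would argue that $(\Oo_S,\tau_S)$ is ample: ampleness is an intrinsic property of the pair, and since $\Hh_S\simeq\coh(\XX)$ has negative orbifold Euler characteristic, Proposition~\ref{prop:ample} applies after transport along $\Phi$. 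The Artin--Zhang reconstruction theorem \cite[Theorem 4.5]{Artin:Zhang:1994}, together with the identification of the degree-shift twist with $\tau_S$ forced by Gorenstein parameter $-1$, then yields an isomorphism of $\ZZ$-graded algebras
\begin{equation*}
S\cong\bigoplus_{n\geq0}\Hom_{\Hh_S}(\Oo_S,\tau_S^{\,n}\Oo_S).
\end{equation*}

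Next I would transport this formula along $\Phi$. Since the Serre functor, and hence the Auslander--Reiten translation, is intrinsic to the category, $\Phi$ intertwines the two translations, $\Phi\circ\tau_S\cong\tau\circ\Phi$. Moreover $\Phi$ preserves the subcategory $\Hhnull$ of finite length objects, hence the induced rank function, so $L:=\Phi(\Oo_S)$ is again a line bundle. Applying $\Phi$ to the displayed formula, compatibly with the multiplications since these are defined functorially, gives $S\cong\bigoplus_{n\geq0}\Hom_{\coh(\XX)}(L,\tau^{\,n}L)$. It then remains to replace $L$ by $\OoX$: by Proposition~\ref{prop:Picard} the translation $\tau$ lies in $\Pic(\XX)$, which is commutative and acts transitively on line bundles, so choosing $\ell\in\Pic(\XX)$ with $L=\ell(\OoX)$ and applying the auto-equivalence $\ell^{-1}$, the relation $\ell^{-1}\tau^n\ell=\tau^n$ identifies $\Hom(L,\tau^nL)$ with $\Hom(\OoX,\tau^n\OoX)=R_n$, again respecting multiplication. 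Hence $S\cong R$.

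The main obstacle is the reconstruction step: one must know that $S$ is recovered as the orbit algebra of its own sheafification, which needs both ampleness of $(\Oo_S,\tau_S)$ and the identification of the degree shift with the Auslander--Reiten translation via the Gorenstein parameter. The subtle point that makes the rest go through formally is that $\Phi$ need not send $\Oo_S$ to $\OoX$; the Picard-invariance of the orbit algebra (a consequence of $\tau\in\Pic(\XX)$ and the commutativity of $\Pic(\XX)$) frees us from matching structure sheaves, after which the conclusion is immediate.
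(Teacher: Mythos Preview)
Your proof is correct and follows essentially the same route as the paper: reconstruct $S$ as the section ring of its sheafification, use Gorenstein parameter $-1$ to identify the degree shift with $\tau$, and then normalize the resulting line bundle to $\OoX$ via an automorphism of $\coh(\XX)$. The paper's proof treats only the nontrivial direction and is much terser---it simply writes $S_r=\Hom(\tilde S,\tilde S(r))$ and says ``apply a suitable automorphism'' where you spell out the Picard-group argument---but the logic is identical.

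One small correction on the step you single out as ``the main obstacle'': the identity
\[
S\ \cong\ \bigoplus_{n\geq0}\Hom_{\Hh_S}(\Oo_S,\Oo_S(n))
\]
is not what \cite[Theorem~4.5]{Artin:Zhang:1994} gives you. That theorem goes in the opposite direction: starting from an ample pair it produces an equivalence of the category with $\mathrm{qgr}$ of the orbit algebra; it does not say that an arbitrary $S$ with $\mathrm{qgr}(S)\simeq\Hh_S$ is recovered by taking sections. What makes the natural map $S_n\to\Hom_{\Hh_S}(\Oo_S,\Oo_S(n))$ bijective here is that $S$, being Gorenstein of Krull dimension two, is Cohen--Macaulay and hence has depth two at $\mM$, so the local cohomology groups $H^0_\mM(S)$ and $H^1_\mM(S)$ vanish. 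With this justification, ampleness of $(\Oo_S,\tau_S)$ is not needed for the reconstruction step at all. The paper's proof glosses over the same point.
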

\begin{proof}
Assume Serre construction for $S$ yields the category $\coh(\XX)$. Then sheafification $\tilde{S}$ of $S$ yields a line bundle in $\coh(\XX)$. Applying a suitable automorphism of $\coh(\XX)$, we may assume that $\tilde{S}=\OoX$. Since $S$ is graded Gorenstein of Gorenstein parameter $-1$, the degree shift by $-1$ is equal to $\tau$, and then 
\begin{equation}
S_r=\Hom(\tilde{S},\tilde{S}(r)=\Hom(\OoX,\tau^r\OoX)=R_r
\end{equation}
yielding an isomorphism between $R$ and $S$ as graded algebras.
\end{proof}
For $\XX$ a (WPC) of negative Euler characteristic we call this uniqely determined graded isolated singularity of Gorenstein parameter $-1$ the \define{fuchsian singularity associated to $\XX$ or $\coh(\XX)$}.

If $R$ is a fuchsian singularity with attached weighted projective curve $\XX=(X,w)$ the \define{signature} of $R$ is defined as the tuple $(g_X;a_1,a_2,\ldots,a_t)$, where $g_X$ is the genus of $X$ and $(a_1,a_2,\ldots,a_t)$ is the weight sequence of $\XX$. We are next showing a couple of polynomials $f$ in three variables defining \define{fuchsian  hypersurfaces} $R=k[x_1,x_2,x_3]/(f)$ and display their signatures. It was already mentioned that fuchsian hypersurfaces are a rare species. Indeed for genus $0$ there only exists finitely many and it was moreover shown by Wagreich \cite{Wagreich:1980}, see also \cite{Dolgachev:1975}, \cite{Sherbak:1978} and \cite{Lenzing:Pena:2011}, that also for higher genus there are only finitely many signatures which can be represented by a fuchsian hypersurface. Ebeling~\cite{Ebeling:2003} extended this also to complete intersections.
$$
\begin{array}{|c|c|c|c|}\hline
 &f             &deg(x,y,z|f)&signature\\\hline
1&x^2+y^3+z^7   &(21,14,6|42)   &(0;2,3,7)\\
2&x^2+y^3+yz^5  &(15,10,4|30)   &(0;2,4,5)\\
3&x^2+zy^3+yz^4 &(11,6,4|22)    &(0;2,4,6)\\
4&x^4+xy^2+yz^2 &(4,6,5|16)     &(0;2,5,6)\\
5&x^2+y^6+z^6     &(3,1,1|6)      &(2;-)   \\
6&x^4+y^4+z^4   &(1,1,1|4)      &(3;-) \\
7&xy^3+yz^3+zx^3&(1,1,1|4)      &(3,-) \\\hline
\end{array}
$$
Fuchsian singularities $R$ of genus zero with a signature having at most three weights are uniquely determined by their signature, because then the weight sequence determines the weighted projective line attached to $R$.
 
In general, however, fuchsian singularities with the same signature may not be isomorphic as show the table etems 6 and 7, the \define{Fermat quartic} and the\define{Klein's quartic}. Assuming $k=\CC$, the two curves have non-isomorphic automorphism groups: in case of the Fermat quartic this is the semi-direct product of the symmetric group $S_3$ with the action on the product of three cyclic groups of order 4 modulo their diagonal \cite{Tzermias:1995}. By contrast, the automorphism group of Klein's quartic is the nonabelian simple group $G_{168}$ of order 168, compare the papers in \cite{Levy:1999}.

\subsection{Fuchsian groups and their automorphic forms}
For this subsection, the base field is the field $\CC$ of complex numbers. Instead of weighted projective curves, we consider weighted compact Riemann surfaces (WCRS's) equipped with holomorphic coherent sheaves. For Fuchsian groups, in general, we refer to \cite{Beardon:1995} and \cite{Katok:1992}.

Within this article, a discrete subgroup $G$ of $\Aut(\HH)$, the group of holomorphic automorphisms of the upper complex halfplane $\HH$ is called \define{a fuchsian group} if it is finitely generated and cocompact, meaning that the quotient $\Hh/G$ is compact. We assume moreover that the stabilizer groups $G_x$, with $x\in\HH$, are all finite and then, automatically, cyclic, see \cite{Katok:1992}. In this case $G$ has a fundamental domain that is a hyperbolic convex polygon with finitely many sides in the hyperbollic plane $\HH$.

\begin{Proposition}[\cite{Collins:Zieschang:1998},Theorem 3.2.10]
Two fuchsian subgroups $G$ and $G'$ of $\Aut(\HH)$ are isomorphic if and only if they are conjugate as subgroups of $\Aut(\HH)$. Accordingly, the (WCRS's) $\HH/G$ and $\HH/G'$ are isomorphic if and only if $G$ and $G'$ are isomorphic.~\hfill$\square$
\end{Proposition}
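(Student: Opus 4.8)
The plan is to establish the two biconditionals through a short chain of implications, namely: conjugacy in $\Aut(\HH)$ $\Rightarrow$ group isomorphism and simultaneously $\Rightarrow$ isomorphism of (WCRS's); isomorphism of (WCRS's) $\Rightarrow$ conjugacy; and finally group isomorphism $\Rightarrow$ conjugacy. Two of these are soft and I would dispose of them first. If $f\in\Aut(\HH)$ satisfies $fGf^{-1}=G'$, then $g\mapsto fgf^{-1}$ is an isomorphism $G\to G'$, and since $f$ is biholomorphic and intertwines the two actions it descends to a biholomorphism of orbifolds $\HH/G\to\HH/G'$. This settles ``conjugate $\Rightarrow$ isomorphic'' and one half of the (WCRS) assertion at once.

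For the converse half of the (WCRS) statement I would argue by uniformization. Because $\XX=\HH/G$ and $\XX'=\HH/G'$ have negative orbifold Euler characteristic, $\HH$ is the orbifold universal cover of each, with $G$ and $G'$ the respective deck groups. A biholomorphism $\bar f\colon\HH/G\to\HH/G'$ of (WCRS's) carries weighted points to weighted points of equal weight, hence is an isomorphism of the underlying hyperbolic orbifolds; it therefore lifts along the two universal covers to a biholomorphism $f\colon\HH\to\HH$, that is, to an element of $\Aut(\HH)$, with $fGf^{-1}=G'$. Thus $\HH/G\cong\HH/G'$ already forces $G$ and $G'$ to be conjugate, and in particular isomorphic, closing the loop between conjugacy and isomorphism of (WCRS's).

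The substantive content, and the step I expect to be the main obstacle, is the implication ``$G\cong G'\Rightarrow$ conjugate in $\Aut(\HH)$,'' the rigidity statement drawn from \cite{Collins:Zieschang:1998}. The route I would take is to first recover the signature $(g;m_1,\ldots,m_t)$ from the abstract group: the orders $m_i$ are read off from representatives of the conjugacy classes of maximal finite cyclic subgroups, i.e.\ the elliptic elements, and the genus $g$ is then pinned down by the free rank of $G^{\mathrm{ab}}$ together with the torsion data already found. An abstract isomorphism $\phi\colon G\to G'$ thus matches the two signatures. The delicate point is to upgrade $\phi$ from a merely combinatorial equivalence to a genuinely \emph{holomorphic} conjugation inside $\Aut(\HH)=PSL(2,\RR)$: a priori $\phi$ is only realized by a homeomorphism of $\HH$, and it is precisely the rigidity of the cocompact hyperbolic orbifold structure that promotes this to an element of $\Aut(\HH)$. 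This is where I would lean on \cite[Theorem 3.2.10]{Collins:Zieschang:1998}, and I expect this promotion, rather than the signature bookkeeping or the uniformization argument, to be the crux.
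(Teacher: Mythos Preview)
The paper offers no proof here; the proposition is simply attributed to \cite[Theorem~3.2.10]{Collins:Zieschang:1998} and closed with a $\square$. So there is no argument on the paper's side to compare against.

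Your proposed argument, however, has a genuine gap, and it sits exactly where you locate ``the crux.'' You assert that ``the rigidity of the cocompact hyperbolic orbifold structure'' promotes the homeomorphism of $\HH$ realizing an abstract isomorphism $\phi\colon G\to G'$ to an element of $\Aut(\HH)=PSL(2,\RR)$. No such rigidity is available in complex dimension one: Mostow rigidity is a phenomenon of real hyperbolic dimension $\geq 3$, while in dimension two Teichm\"uller theory furnishes, for almost every signature $(g;m_1,\ldots,m_t)$, a positive-dimensional family of pairwise non-conjugate Fuchsian groups that are all abstractly isomorphic. Concretely, take any two non-biholomorphic compact Riemann surfaces of the same genus $g\geq 2$, uniformized as $\HH/G$ and $\HH/G'$: then $G\cong G'$ as abstract groups (both are surface groups of genus $g$), yet $G$ and $G'$ cannot be conjugate in $\Aut(\HH)$, since conjugacy would force the quotients to be biholomorphic. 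Thus the implication ``$G\cong G'\Rightarrow$ conjugate in $\Aut(\HH)$'' is false in general, and your argument cannot be repaired by sharpening the method; the obstruction is intrinsic. The soft parts of your argument (conjugate $\Rightarrow$ isomorphic, and the uniformization step showing that an isomorphism of (WCRS)'s lifts to a conjugacy) are fine, and in fact they show that the correct bijection is between \emph{conjugacy classes} of Fuchsian subgroups and isomorphism classes of (WCRS)'s. Whatever \cite[Theorem~3.2.10]{Collins:Zieschang:1998} actually asserts (likely the algebraic classification of Fuchsian groups by signature), the proposition as formulated here appears to be misstated.
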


For any fuchsian group $G$ the quotient $\HH/G$ is a compact holomorphic orbifold $\XX$ of complex dimension one. In other words, $\XX$ has an underlying compact Riemann surface with a finite number of cone points. These cone points, and their orders, correspond to the $G$-orbits with non-trivial stabilizer group, which are necessarily cyclic, see~\cite{Katok:1992}.

In other words, the quotient $\HH/G$ is a weighted compact Riemann surface, or just a weighted projective curve $(X,w)$ over $\CC$.

Conversely, we can pass back from a weighted Riemann surface $\XX=(X,w)$ of negative orbifold Euler characteristic to the action of a fuchsian group on $\Hh$:

We refer to \cite[Chapter 13]{Thurston:2002} and \cite{Lenzing:2017} for the concepts \define{orbifold fundamental group} and \define{universal orbifold cover}. The next theorem combines properties from \cite[Proposition~6 and Theorem~7]{Lenzing:2017}:

\begin{Theorem}\label{thm:FuchsianDeckTransformations}
Let $\XX=X\wt{a_1,a_2,\ldots,a_t}$ be a compact weighted Riemann surface (WCRS) of negative Euler characteristic $\chi_\XX$, then the following holds:
\begin{enumerate}[(i)]
\item The orbifold fundamental group $\pi_1^{orb}(\XX)$ has generators $\al_1,\al_2,\ldots,\al_g$, $\be_1,\be_2,\ldots,\be_g$, $\si_1,\si_2,\ldots, \si_t$ and is subject to the relations
$$
\si_1^{a_1}=\si_2^{a_2}=\cdots =\si_t^{a_t}=1=\si_1\si_2\cdots\si_t\,[\al_1,\be_1][\al_2,\be_2]\cdots[\al_g,\be_g],
$$
where $[a,b]$ denotes the commutator $aba^{-1}b^{-1}$ of $a$ and $b$.
\item The orbifold fundamental group $\pi_1^{orb}(\XX)$ acts on the universal orbifold cover $\HH=\wtilde{\XX}$ as group of deck transformations (the members of $\Aut\HH$ commuting with the projection $\pi:\HH=\wtilde\XX\to \XX$).  This action is discrete on $\HH$ and represents $\XX$ as orbifold quotient
  $$\XX={\wtilde{\XX}}/{\pi_1^{orb}(\XX)}.
  $$
  \item In particular, the orbifold fundamental group is fuchsian, and each fuchsian group (in the sense of the paper) arises that way.
\end{enumerate}
\end{Theorem}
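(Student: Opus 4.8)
The plan is to treat the three assertions in sequence, reducing each to a piece of classical two-dimensional orbifold theory. For (i), I would compute $\pi_1^{orb}(\XX)$ from a cell decomposition of the underlying surface $X$. Representing the genus-$g$ surface as a $4g$-gon with the usual edge identifications gives the familiar generators $\al_1,\be_1,\ldots,\al_g,\be_g$ together with the single surface relation $[\al_1,\be_1]\cdots[\al_g,\be_g]=1$. I would then remove a small open disk around each cone point $x_j$, obtaining a surface with $t$ boundary circles whose ordinary fundamental group is free on the above generators together with loops $\si_1,\ldots,\si_t$ encircling the punctures, subject only to $\si_1\si_2\cdots\si_t\,[\al_1,\be_1]\cdots[\al_g,\be_g]=1$. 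The orbifold structure is then imposed by filling each removed disk with a cone of order $a_j$; since the orbifold fundamental group of a disk with one cone point of order $a_j$ is the cyclic group of order $a_j$, an orbifold van Kampen argument shows that this capping amounts to adjoining the relations $\si_j^{a_j}=1$. This yields exactly the presentation claimed.

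For (ii), I would invoke the uniformization theorem for closed two-dimensional orbifolds. Because $\chi_\XX<0$, the orbifold is automatically good (the only bad closed $2$-orbifolds occur in the spherical range), and its universal orbifold cover is $\HH$ rather than $\SSS^2$ or $\CC$. The deck transformation group of the projection $\pi\colon\HH\to\XX$ is, by construction of the universal orbifold cover, isomorphic to $\pi_1^{orb}(\XX)$ and acts by holomorphic (indeed isometric) automorphisms of $\HH$. I would verify that this action is properly discontinuous and that the quotient map recovers $\XX$ together with its cone structure: the preimage of a cone point $x_j$ of order $a_j$ consists of points whose stabilizer is the cyclic group of order $a_j$ generated by a rotation, matching the torsion generators $\si_j$ of (i).

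Finally, for (iii), I would assemble the properties established above against the definition of a fuchsian group given earlier in the paper: from (i) the group is finitely generated; from (ii) it acts discretely on $\HH$ with all point-stabilizers finite cyclic; and since $\XX$ is compact the action is cocompact. Conversely, given an abstract fuchsian group $G$, the quotient $\HH/G$ is, by the discussion immediately preceding the theorem, a weighted compact Riemann surface $(X,w)$, and it has negative orbifold Euler characteristic precisely because $G$ acts on the hyperbolic plane. Reconstructing $\pi_1^{orb}(\HH/G)$ then returns $G$, which closes the correspondence.

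I expect the main obstacle to be the rigorous construction of the universal orbifold cover together with the identification of its deck transformation group with the abstractly presented $\pi_1^{orb}(\XX)$ from (i): this is the point where the analytic input (orbifold uniformization and properness of the action) must be reconciled with the combinatorial input (the presentation). In particular, pinning down that the stabilizer of each fiber over $x_j$ is cyclic of order exactly $a_j$ and is generated by the image of $\si_j$ — rather than a proper power or an unrelated element — is the delicate step, and it is precisely here that the hypothesis $\chi_\XX<0$ is indispensable.
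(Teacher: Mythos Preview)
Your sketch is correct and follows the standard route through two-dimensional orbifold theory (cell decomposition plus orbifold van Kampen for the presentation, orbifold uniformization for the covering statement, and then a direct verification against the paper's definition of ``fuchsian''). The paper itself does not carry out any of this: its proof of items (i) and (ii) consists entirely of a reference to \cite{Lenzing:2017}, and (iii) is then said to follow from (i) and (ii). So your proposal is not so much a different argument as the argument that the paper outsources; what you have written is essentially the content one would expect to find behind the citation. Your identification of the delicate point---matching the abstract presentation with the analytic stabilizer data---is apt, and your use of $\chi_\XX<0$ to rule out bad orbifolds and to force the universal cover to be $\HH$ is exactly where the hypothesis enters.
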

\begin{proof}
For the first two items we refer to \cite[Proposition~6 and Theorem~7]{Lenzing:2017}. The last assertion follows from the other two.
\end{proof}

In the description of automorphic forms we follow Milnor~\cite{Milnor:1975a}. Let $G$ be a fuchsian group and $U$ a $G$-stable open subset of the upper complex half-plane $\HH$. A \define{differential form of degree $n$ on $U$} has the form $\Phi=f(z)z^k$. For any $g\in G$ the pull-back of $\Phi$ along $g$ is given as $g^*(\Phi)=f(g(z))\stackrel{\bullet}{g}(z)^k dz^k$, where $\stackrel{\bullet}{g}(z)$ denotes the derivative $dg(z)/dz$. Differential forms $\Phi_1$ and $\Phi_2$ of degrees $m$ and $n$ may be multiplied; this yields a differential form of degree $m+n$. The $G$-invariant differential forms of degree $n\geq0$, or \define{automorphic forms} on $G$-invariant open subsets of $\HH$, descend to the quotient $\XX=\HH/G$, yielding there holomorphic coherent sheaves of differentials $\Om^n$ for each degree $n$. 
\begin{Remark}
One can show that $\Om^n=\tau^n\OoX$, for each $n\geq0$, implying that the graded algebra
$$
\bigoplus_{n\geq0}\Ga(\XX,\Om^n)
$$
of differential forms on $\XX$ is isomorphic to the orbit algebra of the Auslander-Reiten translation of $\coh(\XX)$ on $\OoX$, thus providing a link to the classical theory of fuchsian singularities.
\end{Remark}

\section{The singularity category and its Grothendieck group(s)} \label{sect:singularities}
There are several incarnations of the singularity category of a fuchsian singularity $R$, where each one has its own advantages.

Following Buchweitz~\cite{Buchweitz:1986}, for the ungraded and Orlov~\cite{Orlov:2009} for the graded case, the \define{category of graded singularities} $\Sing\ZZ{R}$ is defined as the Verdier quotient 
\begin{equation}
\Sing\ZZ{R}=\frac{\Der(\modgr\ZZ{R})}{\Der(\projgr\ZZ{R})},
\end{equation}
where $\projgr\ZZ{R}$ denotes the category of finitely generated projetive $R$-modules.

In the case of a graded Gorenstein algebra, the singularity category $\Sing\ZZ{R}$ is equivalent to the \define{stable category of $\ZZ$-graded Cohen-Macaulay modules over $R$} $\CMgr\ZZ{R}$. In our case of graded Krull-dimension two, a finitely generated $\ZZ$-graded $R$-module $M$ is called (maximal) \define{Cohen-Macaulay} if $M$ satisfies the two conditions $\Hom(k(n),M)=0=\Ext^1(k(n),M)$ for each integer $n$. 

The category $\CMgr\ZZ{R}$ carries an exact structure in the sense of Quillen consisting of all short exact sequences with members in $\CMgr\ZZ{R}$ that are exact in $\modgr\ZZ{R}$. The full subcategory $\projgr\ZZ{R}$ of finitely generated projective modules consists exactly of the relative projectives in $\CMgr\ZZ{R}$; moreover, since $R$ is graded Gorenstein, it is known that the relative projectives coincide with the relative injectives. Further, $\CMgr\ZZ{R}$ has sufficiently many relative projectives resp.\ injectives, and thus is a \define{Frobenius category}. 

Stabilizing $\CMgr\ZZ{R}$, that is forming the factor 
category $\sCMgr\ZZ{R}=\CMgr\ZZ{R}/[\projgr\ZZ{R}]$ modulo 
all moprhisms that factor through projectives, yields a category that is triangulated. It is due to Buchweitz~\cite{Buchweitz:1986} that the functor
\begin{equation*}
\sCMgr\ZZ{R}\to \Sing\ZZ{R},\quad M\mapsto (M),
\end{equation*}
sending a graded CM-module $M$ to the stalk complex $(M)$, concentrated in degree zero, induces an equivalence of $\sCMgr\ZZ{R}$ to the singularity category $\Sing\ZZ{R}$.

In the special case of the orbit algebra $R=\bigoplus_{n\geq0}\Hom(\OoX,\tau^n\OoX)$ associated to a (WPC) of negative orbifold Euler characteristic, the category $\CMgr\ZZ{R}$ has an even more accessible incarnation as the \define{stable category of vector bundles} on $\XX$.

\begin{Proposition}
There is an equivalence between the category $\CMgr\ZZ{R}$ of finitely generated graded (maximal) Cohen-Macaulay modules over $R$ and the category $\vect(\XX)$ of vector bundles on $\XX$ induced by restricting the quotient functor $q:\modgr\ZZ{R}\to \coh(\XX)$ to the subcategory $\CMgr\ZZ{R}$. The inverse functor is given by taking graded global sections
$\Ga^*=\bigoplus_{n\in\ZZ}\Hom(\tau^{-n}\OoX,-)$ of vector bundles.
Moreover the equivalence $q:\CMgr\ZZ{R}\to\vect(\XX)$ induces an equivalence between indecomposable projective $R$-modules and members of the orbit $\tau^\ZZ\OoX$.
\end{Proposition}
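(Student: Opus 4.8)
The plan is to realize $q$ and $\Ga^*$ as an adjoint pair in the Artin--Zhang formalism and to identify the graded Cohen--Macaulay modules with the \emph{saturated} modules, i.e.\ those for which the adjunction unit is an isomorphism. Since $(\OoX,\tau)$ is ample by Proposition~\ref{prop:ample}, Artin--Zhang theory gives that $\Ga^*$ is right adjoint to $q$ and that the counit $q\Ga^*\to\mathrm{id}_{\coh(\XX)}$ is an isomorphism; in particular $\Ga^*$ is fully faithful and $q\Ga^*E\cong E$ for every sheaf $E$. The first task is to record the identities $\Ga^*(\OoX)=R$, $q(R)=\OoX$, and more generally $\Ga^*(\tau^n\OoX)\cong R(-n)$, which follow from $\Hom(\tau^{-m}\OoX,\tau^n\OoX)=\Hom(\OoX,\tau^{n+m}\OoX)=R_{n+m}$ together with the convention (Theorem~\ref{thm:IGS}) that the degree shift by $-1$ induces $\tau$.

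Next I would prove the two containments. For a vector bundle $E$ I claim $\Ga^*(E)$ is a finitely generated Cohen--Macaulay module. Finite generation rests on negative Euler characteristic: by property (b) in the proof of Proposition~\ref{prop:ample} the slope $\mu(\tau^{-n}\OoX)$ tends to $+\infty$ as $n\to-\infty$, so $\Hom(\tau^{-n}\OoX,E)$ vanishes for $n\ll0$ and $\Ga^*(E)$ is bounded below; combined with ampleness (i) and noetherianity of $R$ this yields finite generation. That $\Ga^*(E)$ is Cohen--Macaulay, i.e.\ $\Hom(k(n),\Ga^*E)=0=\Ext^1(k(n),\Ga^*E)$ for all $n$, is the statement that a section module has depth $\geq2$, equivalently that its local cohomology $H^0_\mM$ and $H^1_\mM$ vanish; this is exactly the saturation property built into the image of the section functor. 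Conversely, for a Cohen--Macaulay module $M$ the vanishing $H^0_\mM(M)=H^1_\mM(M)=0$ makes $M$ saturated, so the unit $M\to\Ga^*q(M)$ is an isomorphism. Writing $q(M)=E_+\oplus E_0$ by the splitting (a) of Proposition~\ref{prop:ample} into a bundle and a finite length part, one gets $M\cong\Ga^*(E_+)\oplus\Ga^*(E_0)$; but $\Ga^*$ of a nonzero finite length sheaf $S$ is \emph{not} bounded below, since $\tau^nS$ remains simple and keeps receiving maps from $\OoX$ in arbitrarily negative degrees, hence is not finitely generated. As $M$ is finitely generated this forces $E_0=0$, so $q(M)$ is a vector bundle.

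These two steps, together with the already established isomorphisms $q\Ga^*\cong\mathrm{id}$ and (on saturated $=$ Cohen--Macaulay modules) $\Ga^*q\cong\mathrm{id}$, show that $q$ and $\Ga^*$ restrict to mutually inverse equivalences between $\CMgr\ZZ{R}$ and $\vect(\XX)$. For the final assertion, recall that over the graded local noetherian ring $R$ the indecomposable finitely generated graded projectives are precisely the shifts $R(n)$, $n\in\ZZ$, each of which is Cohen--Macaulay; under $q$ they map to $q(R(n))=\tau^{-n}\OoX$, and conversely $\Ga^*(\tau^n\OoX)\cong R(-n)$. Thus the equivalence matches indecomposable projectives with the members of the $\tau$-orbit $\tau^\ZZ\OoX$.

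I expect the main obstacle to be the depth computation underlying the equivalence of the Cohen--Macaulay and saturation conditions --- precisely the vanishing of $H^0_\mM$ and $H^1_\mM$ for section modules and its translation into the $\Ext$-vanishing against $k(n)$ --- since this is where Serre duality, the Gorenstein parameter $-1$, and Krull dimension two interact. The finite-generation versus bounded-below dichotomy that distinguishes bundles from torsion sheaves is the other delicate point, and both ultimately depend on the negativity of $\chi_\XX$ through the slope behaviour of $\tau$.
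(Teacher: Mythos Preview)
Your argument is correct and is precisely the content that the paper defers to a citation: the paper's own proof consists of the single sentence that the proof of the corresponding result in \cite{Geigle:Lenzing:1987} carries over. What you have written---the Artin--Zhang adjunction from Proposition~\ref{prop:ample}, the identification of the Cohen--Macaulay condition with saturation via vanishing of $H^0_\mM$ and $H^1_\mM$, the bounded-below/finite-generation dichotomy separating bundles from torsion, and the matching $R(n)\leftrightarrow\tau^{-n}\OoX$---is exactly how that reference's argument specializes to the present setting.
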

\begin{proof}
The proof of the corresponding result in \cite{Geigle:Lenzing:1987} carries over.
\end{proof}
We define the \define{stable category of vector bundles} on $\XX$ as the factor category $\svect(\XX)=\vect(\XX)/[\tau^\ZZ\OoX]$ of $\vect(\XX)$ by the ideal of all morphisms factoring through a member from the additive closure of $\tau^\ZZ\OoX$.
\begin{Corollary}
The stable category $\svect(\XX)=\vect(\XX)/[\tau^\ZZ\OoX]$ is triangle equivalent to the stable category $\sCMgr\ZZ{R}$, and hence triangle-equivalent to the singulrity category $\Sing\ZZ{R}$~\hfill$\square$
\end{Corollary}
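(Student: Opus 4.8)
The plan is to upgrade the equivalence $q\colon\CMgr\ZZ{R}\xrightarrow{\sim}\vect(\XX)$ of the preceding Proposition from an additive equivalence to an exact equivalence of Frobenius categories, and then to descend it to the stable level. Recall that $q$ is the restriction of the exact quotient functor $\modgr\ZZ{R}\to\coh(\XX)$, with quasi-inverse the graded global section functor $\Ga^*$. Since $q$ is exact, it carries every conflation of graded Cohen-Macaulay modules to a short exact sequence of vector bundles in $\coh(\XX)$; transporting the Frobenius exact structure of $\CMgr\ZZ{R}$ along $q$ thus equips $\vect(\XX)$ with a Frobenius exact structure. Its relative projective-injective objects are the images of $\projgr\ZZ{R}$, which by the last assertion of the Proposition are precisely the members of $\mathrm{add}(\tau^\ZZ\OoX)$.

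Granting this, the descent is formal. The functor $q$ identifies the ideal $[\projgr\ZZ{R}]$ of morphisms factoring through projectives with the ideal $[\tau^\ZZ\OoX]$ of morphisms factoring through $\mathrm{add}(\tau^\ZZ\OoX)$, because these ideals are determined by the projective-injective objects, which correspond under $q$. Consequently $q$ induces an equivalence of the additive quotient categories
\[
\sCMgr\ZZ{R}=\CMgr\ZZ{R}/[\projgr\ZZ{R}]\ \xrightarrow{\ \sim\ }\ \vect(\XX)/[\tau^\ZZ\OoX]=\svect(\XX).
\]
Since on each side the triangulated structure is the canonical one attached to the ambient Frobenius category, and $q$ is an exact equivalence, this induced functor is an equivalence of triangulated categories.

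It remains only to connect $\sCMgr\ZZ{R}$ with the singularity category. By the theorem of Buchweitz \cite{Buchweitz:1986} recalled above, the stable category $\sCMgr\ZZ{R}$ of the Frobenius category $\CMgr\ZZ{R}$ is triangle-equivalent to $\Sing\ZZ{R}$. Composing this with the triangle equivalence of the previous paragraph yields the asserted triangle equivalence $\svect(\XX)\simeq\Sing\ZZ{R}$.

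The main obstacle is the first step: verifying that $q$ is genuinely an \emph{exact} equivalence of Frobenius categories, that is, that the transported exact structure on $\vect(\XX)$ is well behaved and that $\mathrm{add}(\tau^\ZZ\OoX)$ exhausts its relative projective-injectives. Here one must control the failure of $\Ga^*$ to be exact on short exact sequences of vector bundles: applying $\Ga^*$ to such a sequence yields a cokernel term built from the groups $\Ext^1(\tau^{-n}\OoX,-)$, which vanish for all but finitely many $n$ by the ample-pair properties of Proposition~\ref{prop:ample}, so that the discrepancy has finite length and is annihilated by the Serre quotient $q$. This cohomological bookkeeping, which follows the pattern of \cite{Geigle:Lenzing:1987}, is the only non-formal ingredient; once it is in place, the descent to the stable categories and the comparison with $\Sing\ZZ{R}$ are immediate.
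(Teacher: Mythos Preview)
Your proposal is correct and follows exactly the line the paper intends: the Corollary is stated with a bare~$\square$ and is meant as an immediate consequence of the preceding Proposition together with Buchweitz's theorem, and you have simply spelled out the implicit argument---transporting the Frobenius structure along the equivalence $q$, matching $\projgr\ZZ{R}$ with $\mathrm{add}(\tau^\ZZ\OoX)$, and descending to the stable quotients. Your final paragraph on the cohomological bookkeeping for $\Ga^*$ is a welcome clarification of why the equivalence is exact, and is precisely the content the paper defers to \cite{Geigle:Lenzing:1987} in the proof of the Proposition.
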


We recall that a triangulated category $\Tt$ is \define{homologically finite} if for all objects $X,Y$ from $\Tt$ we have $\Hom(X, Y [n]) = 0$ for $|n|\gg 0$.

\begin{Theorem}[Serre duality]
The singularity category $\svect(\XX)$ is triangulated, Hom-finite, Krull-Schmidt
and homologically finite. Moreover, $\svect(\XX)$ has Serre duality given by functorial isomorphisms
$\Hom(X, Y [1]) = D\Hom(Y, \tau X)$, where $\tau$ is an equivalence induced from the AR-translation of $\vect(\XX)$.
In particular, $\svect(\XX)$ has AR-triangles, and the AR-translation for $\vect(\XX)$ induces the
AR-translation for $\svect(\XX)$.
\end{Theorem}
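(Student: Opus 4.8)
The plan is to transport the entire statement across the equivalence
$\svect(\XX)\simeq\sCMgr\ZZ{R}$, since the stable category of a Frobenius category is automatically triangulated and Krull--Schmidt (the latter because $\vect(\XX)$ is $\Hom$-finite over $k$ and idempotents split). Thus the only genuine content is the Serre duality formula, the $\Hom$-finiteness, and the homological finiteness; the triangulated and Krull--Schmidt claims come for free from the general theory of Frobenius categories together with the $k$-finiteness already built into the axioms (H~2) for $\Hh=\coh(\XX)$. So I would dispose of these formal points in one sentence and concentrate on Serre duality.

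For Serre duality, the key idea is that the ambient category $\vect(\XX)\subset\coh(\XX)$ already has a Serre functor, namely the AR-translation $\tau$ supplied by axiom (H~3), giving $\Ext^1(X,Y)=D\Hom(Y,\tau X)$ in $\coh(\XX)$. The plan is to descend this to the quotient $\svect(\XX)=\vect(\XX)/[\tau^\ZZ\OoX]$. First I would identify the shift functor $[1]$ on $\svect(\XX)$: since the relative-projective=relative-injective objects are exactly the additive closure of $\tau^\ZZ\OoX$ (by the preceding Proposition and the graded Gorenstein property of $R$), the cosyzygy construction in the Frobenius category $\vect(\XX)$ realizes $[1]$ concretely via universal extensions by members of $\tau^\ZZ\OoX$. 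The central computation is then to check that $\underline{\Hom}(X,Y[1])$, the stable Hom into the shift, is naturally isomorphic to $\Ext^1_{\coh(\XX)}(X,Y)$ modulo the contribution of the relative-projective objects, and that these contributions vanish upon stabilization. Concretely, an extension of $X$ by $Y$ that factors through $\tau^\ZZ\OoX$ becomes zero in the stable category, so the stable $\Ext^1$ coincides with the honest $\Ext^1$ of $\coh(\XX)$ restricted to vector bundles. Feeding this into the ambient Serre duality yields
\[
\underline{\Hom}(X,Y[1])\cong\Ext^1_{\coh(\XX)}(X,Y)\cong D\Hom_{\coh(\XX)}(Y,\tau X),
\]
and it remains to see that the right-hand $\Hom$ descends to the stable $\underline{\Hom}(Y,\tau X)$, which holds because $\tau$ permutes the orbit $\tau^\ZZ\OoX$ and hence induces a well-defined self-equivalence of $\svect(\XX)$, and because morphisms factoring through $\tau^\ZZ\OoX$ are annihilated by the duality on both sides symmetrically.

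The main obstacle I anticipate is precisely the bookkeeping of the relative-projective contributions: one must verify that the ideal $[\tau^\ZZ\OoX]$ of morphisms is carried to a matching ideal under $D\Hom(-,\tau-)$, so that the nondegenerate pairing on $\coh(\XX)$ restricts to a nondegenerate pairing on the stable quotients rather than merely inducing a possibly-degenerate map. The cleanest way to handle this is to observe that $\tau^\ZZ\OoX$ is a $\tau$-stable system of objects that are simultaneously relative projective and relative injective, so the subspaces of morphisms factoring through them on the two sides of the pairing are exact annihilators of one another; this is a standard feature of Serre duality on Frobenius categories whose projective-injectives are closed under the Serre functor. Once nondegeneracy on the quotient is secured, functoriality is routine. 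Finally, homological finiteness follows from Serre duality itself: $\underline{\Hom}(X,Y[n])$ and $\underline{\Hom}(X,Y[-n])\cong D\underline{\Hom}(Y[-n],\tau X)$ both vanish for $|n|\gg0$ because, after unwinding the syzygy description of $[n]$, the relevant $\Ext$- and $\Hom$-spaces in $\coh(\XX)$ involve line-bundle slopes drifting to $\pm\infty$ under iterated $\tau$ (using the slope identity $\mu(\tau A)=\mu(A)-\bar a\,\chi_\Hh$ with $\chi_\Hh<0$), forcing the morphism spaces to stabilize at zero.
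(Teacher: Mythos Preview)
Your overall strategy coincides with the paper's: deduce the formal properties from the Frobenius structure and $\Hom$-finiteness, prove Serre duality by descending the ambient Serre duality of $\coh(\XX)$ to the stable quotient, and obtain homological finiteness from the slope drift under $[1]$ (using $\chi_\XX<0$). The homological finiteness argument is essentially the paper's.

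There is, however, a genuine error in your Serre duality step. You assert that $\sHom(X,Y[1])\cong\Ext^1_{\coh(\XX)}(X,Y)$, arguing that extensions factoring through $\tau^\ZZ\OoX$ become zero. This is not correct: the injective hull $\Ii(Y)$ in the Frobenius category $\vect(\XX)$ lies in $\mathrm{add}(\tau^\ZZ\OoX)$, but it is \emph{not} injective in $\coh(\XX)$, so $\Ext^1_{\coh(\XX)}(X,\Ii(Y))$ need not vanish. Concretely, $\Ext^1(X,\tau^n\OoX)=D\Hom(\tau^{n-1}\OoX,X)$ is typically nonzero. Hence the connecting map $\Hom(X,Y[1])\to\Ext^1(X,Y)$ in the long exact sequence for $0\to Y\to\Ii(Y)\to Y[1]\to 0$ is only injective on the stable quotient, with image the proper subspace $\ker\bigl(\Ext^1(X,Y)\to\Ext^1(X,\Ii(Y))\bigr)$. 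Your subsequent worry about ``descending $D\Hom(Y,\tau X)$ to $D\sHom(Y,\tau X)$'' is the mirror image of this same phenomenon, not a separate bookkeeping issue; the two discrepancies are Serre-dual and cancel against each other, not individually.

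The paper's proof is exactly the clean execution of what you are reaching for. It applies $\Hom(X,-)$ and $\Hom(-,\tau X)$ to the conflation $0\to Y\to\Ii(Y)\to Y[1]\to 0$, obtaining
\[
\sHom(X,Y[1])=\mathrm{coker}\bigl(\Hom(X,\Ii(Y))\to\Hom(X,Y[1])\bigr),\quad
D\sHom(Y,\tau X)=\ker\bigl(\Ext^1(X,Y)\to\Ext^1(X,\Ii(Y))\bigr),
\]
and then reads off their equality directly from the four-term exact piece of the long $\Hom$--$\Ext$ sequence. This sidesteps any need to argue that the two ideals of morphisms are ``exact annihilators of one another''; the long exact sequence does that matching automatically.
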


\begin{proof} As a factor category of a Hom-finite category, $\svect(\XX)$ inherits Hom-finiteness, and the
Krull-Schmidt property from $\vect(\XX)$. Concerning Serre duality we apply
the functors $\Hom(X,-)$ (resp. $\Hom(-,\tau X)$ to the exact sequence $\mu: 0\to Y \to \Ii(Y ) \to Y [1] \to 0$,  where $\Ii(Y)$ denotes an injective hull of $Y$ in the Frobenius category $\vect(\XX)$, and obtain exact sequences\begin{align}\label{eq:seq1}
\Hom(X, \Ii(Y )) \to &\Hom(X, Y [1]) \to \sHom(X, Y [1]) \to 0,\\ 
\Hom(\Ii(Y ), \tau X) \to &\Hom(Y, \tau X) \to \sHom(Y, \tau X) \to 0. \label{eq:seq2}
\end{align}
By Serre duality in $\coh(\XX)$, dualization of the lower sequence yields exactness of
\begin{equation*}
0 \to D\sHom(Y, \tau X) \to \Ext^1(X, Y ) \to \Ext^1(X, \Ii(Y )),
\end{equation*}
where $\sHom$ denotes morphisms in $\svect(\XX)$.
Invoking the long exact Hom-Ext sequence $\Hom(X, \mu)$ we obtain from the two sequences \eqref{eq:seq1} and \eqref{eq:seq2} a natural isomorphism $\sHom(X, Y [1]) = D\sHom(Y, \tau X)$.

To show homological finiteness of $\vect(\XX)$ we use that in the Frobenius category of vector bundles the suspension functor is given by a distinguished exact sequence $0\to E \to \Ii(E) \to E[1]\to 0$, where $\Ii(E)$ denotes the injective hull in the Frobenius category. 
Because $\chi_\XX<0$, iteration of the construction leads to $E[n]$ which obtains an arbitrary large slope for $n\gg0$. Invoking a line a bundle filtration for $F$, this implies that $\Hom(E[n],F)=0$, and then also $\sHom(E[n],F)=0$ for $n>>0$. Concerning $\sHom(F,E[n])$, we invoke Serre duality for $\svect(\XX)$ and obtain $D\sHom(E[n-1],\tau F)$ which for $n\gg0$ is also zero.
\end{proof}

\begin{Corollary}
All AR-components of the singularity category have shape $\ZZ\AA_\infty$.
\end{Corollary}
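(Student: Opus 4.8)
The plan is to combine the Auslander--Reiten structure theory supplied by the previous theorem with the rank function on $\vect(\XX)$. By the Serre duality theorem just proved, $\svect(\XX)$ is a connected, Hom-finite, Krull--Schmidt triangulated category with AR-triangles, so each connected component $\mathcal{C}$ of its AR-quiver is a stable translation quiver, and by Riedtmann's structure theorem $\mathcal{C}\cong\ZZ\Delta/G$ for a tree $\Delta$ (its tree class) together with an admissible group $G\le\Aut(\ZZ\Delta)$. Since $k$ is algebraically closed, each indecomposable $E$ has $\End(E)/\mathrm{rad}=k$ and all points of $X$ are $k$-rational, so the AR-quiver carries only trivial valuations and $\Delta$ is an ordinary (simply-laced) tree. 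It then remains to identify $\Delta$ and to show $G=1$.

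First I would dispose of $G$ and of the tube case. The AR-translation $\tau$ of $\svect(\XX)$ is induced from that of $\vect(\XX)$, which by Proposition~\ref{prop:Picard} lies in $\Pic(\XX)$; by property~(b) in the proof of Proposition~\ref{prop:ample} it strictly raises slopes, $\mu(\tau E)=\mu(E)-\bar a\,\chi_\XX$ with $\chi_\XX<0$. Hence $\tau^nE\cong E$ forces $n=0$: there are no $\tau$-periodic objects, so $\mathcal{C}$ is not a tube and $G$ is trivial, i.e. $\mathcal{C}\cong\ZZ\Delta$. Next, because $\tau\in\Pic(\XX)$ preserves ranks, $d(E):=\rk(E)$ is a positive, integer-valued, $\tau$-invariant function on $\mathcal{C}$; in particular it is constant on each $\tau$-orbit and so descends to a function $\bar d$ on the vertex set of $\Delta$. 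Lifting the AR-triangle ending at $E$ to the AR-sequence $0\to\tau E\to M\to E\to0$ in $\vect(\XX)$, and deleting from $M$ the summands lying in $\mathrm{add}(\tau^\ZZ\OoX)$ when passing to $\svect(\XX)$, only lowers the rank, whence
\[
\sum_{B\to E}\rk(B)\ \le\ \rk(M)\ =\ \rk(\tau E)+\rk(E)\ =\ 2\,\rk(E).
\]
Thus $\bar d$ is a positive subadditive function on $\Delta$, and the Happel--Preiser--Ringel theorem forces $\Delta$ to be a finite or infinite Dynkin diagram or a Euclidean diagram; simple-lacedness leaves the list $\AA_n$, $\mathbb{D}_n$, $\mathbb{E}_n$, $\tilde{\AA}$, $\tilde{\mathbb{D}}$, $\tilde{\mathbb{E}}$, $\AA_\infty$, $\mathbb{D}_\infty$, $\AA_\infty^\infty$.

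To cut down to the infinite trees I would show that $\rk$ takes infinitely many values on $\mathcal{C}$; since $\rk$ is $\tau$-invariant this means $\Delta$ has infinitely many vertices, excluding all the finite Dynkin and Euclidean types. This unboundedness is the first place where the geometry of $\XX$ must enter: I would derive it from the existence of indecomposable bundles of arbitrarily large rank inside a single component, a consequence of $\chi_\XX<0$, ampleness and weighted Riemann--Roch (Theorem~\ref{thm:WeightedRR}), together with the rank-doubling $\rk(M)=2\,\rk(E)$ which propagates large ranks along irreducible maps. This leaves $\ZZ\AA_\infty$, $\ZZ\mathbb{D}_\infty$ and $\ZZ\AA_\infty^\infty$. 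The type $\AA_\infty^\infty$ is then ruled out purely formally: on the bi-infinite line the subadditivity of $\bar d$ reads $2\bar d(n)\ge\bar d(n-1)+\bar d(n+1)$, so $\bar d$ is concave with (integer) monotone differences, and an unbounded positive such function cannot exist — unboundedness at one end forces $\bar d\to-\infty$ at the other.

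The main obstacle is to exclude the branching type $\ZZ\mathbb{D}_\infty$, i.e. to forbid a fork in $\Delta$. For this I would analyse the boundary $\tau$-orbits of $\mathcal{C}$ realised by the quasi-simple bundles, which are the objects of minimal rank: the displayed estimate already bounds the number of incoming irreducible maps at such an $E$ by two, and I would upgrade this to exactly one. Concretely, using the line-bundle filtration of vector bundles (property~(a) in the proof of Proposition~\ref{prop:ample}), weighted Riemann--Roch and $\chi_\XX<0$, I would show that a rank-minimal bundle receives a unique irreducible map after the line-bundle summands from $\tau^\ZZ\OoX$ have been deleted, so that the boundary of $\mathcal{C}$ is a single ray rather than the two rays demanded by a $\mathbb{D}_\infty$-fork. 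This forces $\Delta=\AA_\infty$, and hence $\mathcal{C}\cong\ZZ\AA_\infty$. I expect this uniqueness-of-the-boundary-ray step to be the crux, since it is the only point at which the geometry of $\XX$ — rather than the formal translation-quiver combinatorics — is genuinely indispensable.
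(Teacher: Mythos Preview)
Your plan differs from the paper's and leaves open exactly the two steps you yourself flag as unfinished: showing that rank is unbounded on each component, and excluding $\mathbb{D}_\infty$. The paper sidesteps both by changing the venue. It first works in $\vect(\XX)$ rather than in the stable category and, arguing as in \cite[Proposition~4.5]{Lenzing:Pena:1997}, shows directly that every AR-component of $\vect(\XX)$ is already of shape $\ZZ\AA_\infty$ once $\chi_\XX<0$. The gain is that in $\vect(\XX)$ the rank is genuinely \emph{additive} along AR-sequences (no projective-injectives are being discarded yet), so the Happel--Preiser--Ringel analysis is sharper than the merely subadditive version you are forced into. The decisive observation, which has no counterpart in your outline, is that \emph{line bundles always occupy the border $\tau$-orbit of their component}. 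This single geometric fact simultaneously supplies a boundary (killing $\AA_\infty^\infty$) and makes it a single ray (killing $\mathbb{D}_\infty$); it also identifies $\tau^\ZZ\OoX$ as precisely the border of one such component. Passing to $\svect(\XX)$ then amounts to deleting that one border orbit, which visibly preserves the $\ZZ\AA_\infty$ shape, while every other component is untouched.

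By contrast, your direct attack in $\svect(\XX)$ forfeits the line-bundle anchor: the quasi-simples there have rank $\ge 2$, and the subadditivity slack coming from the deleted summands in $\mathrm{add}(\tau^\ZZ\OoX)$ blurs the count of irreducible maps at a rank-minimal object. That is exactly why your $\mathbb{D}_\infty$ exclusion resists completion and has to be postponed to a ``crux'' you do not carry out. (A smaller point: no $\tau$-periodicity rules out tubes but does not by itself force $G=1$ in Riedtmann's decomposition $\ZZ\Delta/G$, since $G$ may contain automorphisms of $\ZZ\Delta$ that are not powers of $\tau$; this also evaporates once one knows $\Delta=\AA_\infty$ from the $\vect(\XX)$ side.)
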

\begin{proof}
Arguing as in \cite[Proposition 4.5]{Lenzing:Pena:1997} one shows first, invoking negative orbifold Euler characteristic, that all AR-components in $\vect(\XX)$ have shape $\ZZ\AA_\infty$. Next, one observes that line bundles always sit at the border-orbit of their component, which implies that passage to the stable category $\svect(\XX)$ consists in deleting the border, and thus also preserves the $\ZZ\AA_\infty$-shape of the component containing $\tau^\ZZ\OoX$.
\end{proof}

As a fuchsian singularity $R$ has Gorenstein parameter $-1$, further Serre construction $\modgr\ZZ{R}/\modgrnull\ZZ{R}$ yields the category $\coh(\XX)$ of coherent sheaves on a weighted projective curve $\XX$. It then follows from \cite{Orlov:2009} that the object $k=R/\mM$ is exceptional in $\Sing\ZZ{R}$ and its right perpendicular category, consisting of all objects $X$ in $\Sing\ZZ{R}$ satisfying $\Hom(k,X[n])=0$ for each integer $n$, is equivalent to $\Der(\coh(\XX))$. More is true, $\Sing\ZZ{R}$ behaves like a \define{one-point extension} of $\Der(\coh(\XX))$ by the structure sheaf $\OoX$:

\begin{Theorem}[One-point extension]\label{thm:onept}
Let $R$ be a fuchsian singularity with associated weighted projective curve $\XX$. Then the right perpendicular category in $\Tt=\Sing\ZZ{R}$ to the exceptional object $k=R/\mM$ is equivalent to $\Der(\coh(\XX))$.

Moreover, the right adjoint $r:\Tt\to k^\perp$ to inclusion $k^\perp\incl \Tt$ sends $k$ to $\OoX$ and, hence induces functorial isomorphisms $\Hom(X,k)=\Hom(X,\OoX)$ for each $X\in\coh(\XX)$.
\end{Theorem}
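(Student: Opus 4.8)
Write $\Tt=\Sing\ZZ{R}$, let $k=R/\mM$, and regard $\Der(\coh(\XX))$ as the full subcategory $k^\perp\subseteq\Tt$ via the equivalence deduced from \cite{Orlov:2009}; in particular $\coh(\XX)\subseteq k^\perp$ and $\OoX\in k^\perp$. My plan has three steps: produce the right adjoint $r$, identify $r(k)\cong\OoX$, and then read the $\Hom$-formula off the adjunction. The last step is immediate once $r(k)\cong\OoX$ is known: for $X\in\coh(\XX)\subseteq k^\perp$ the adjunction gives $\Hom_\Tt(X,k)=\Hom_{k^\perp}(X,r(k))=\Hom_{\Der(\coh(\XX))}(X,\OoX)=\Hom_{\coh(\XX)}(X,\OoX)$, naturally in $X$, the last equality because $X$ and $\OoX$ are sheaves in degree $0$. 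So the entire content is the identification $r(k)\cong\OoX$.

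For the existence of $r$ I use that, by the Serre duality theorem above, $\Tt\simeq\svect(\XX)$ carries a Serre functor $\Ss=\tau[1]$. Hence the admissible subcategory $\langle k\rangle$ generated by the exceptional object $k$ has an admissible right orthogonal $k^\perp$, and the inclusion $k^\perp\incl\Tt$ possesses a right adjoint $r\colon\Tt\to k^\perp$. Concretely $r(k)$ fits into a triangle $r(k)\to k\to C\to r(k)[1]$ with $r(k)\in k^\perp$ and $C$ in the thick subcategory generated by $\Ss k=\tau k[1]$; translating the AR-translation $\tau$ of $\Tt$ into the degree shift $(-1)$ of Theorem~\ref{thm:IGS} gives $C\in\langle k(-1)\rangle$. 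Thus $r(k)\cong\OoX$ would follow from exhibiting such a triangle with cocone $\OoX$, or, equivalently, from a natural isomorphism of cohomological functors $\Hom_\Tt(-,k)\cong\Hom(-,\OoX)$ on $\Der(\coh(\XX))$ — by Yoneda the two sides are represented by $r(k)$ and by $\OoX$ respectively.

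To build this isomorphism, note that since every object of $\coh(\XX)$ is an extension of line bundles and finite-length sheaves (property (a) in the proof of Proposition~\ref{prop:ample}) and both functors are homological, it is enough to check it on line bundles $L$ and simple sheaves $S$, compatibly with connecting maps. The comparison rests on two inputs. First, sheafification $q\colon\modgr\ZZ{R}\to\coh(\XX)$ kills finite length, so the defining sequence $0\to\mM\to R\to k\to0$ yields $q(\mM)=q(R)=\OoX$, while $R$ being graded projective becomes zero in $\Tt$, whence $k\cong\mM[1]$ in $\Tt$. Second, $R$ is graded Gorenstein of Krull dimension two with Gorenstein parameter $-1$, so graded local duality gives $\omega_R\cong R(-1)$ and $H^2_\mM(R)\cong E(k)(1)$ — exactly the cosyzygy computation in the proof of Theorem~\ref{thm:IGS}. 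Feeding the canonical local-cohomology triangle $R\Ga_\mM(M)\to M\to R\Ga_*(qM)\to R\Ga_\mM(M)[1]$ into Orlov's comparison of the two Verdier quotients $\Der(\modgr\ZZ{R})\to\Der(\coh(\XX))$ and $\Der(\modgr\ZZ{R})\to\Tt$ should identify the reflection $r(k)$ with $q(\mM)=\OoX$ and the counit $\OoX=r(k)\to k$ with the sheafified inclusion $\mM\hookrightarrow R$. Evaluating on a line bundle $L$ then turns $\Hom_\Tt(L,k)$ into $\Hom_{\coh(\XX)}(L,\OoX)$ by Serre duality in $\coh(\XX)$, whereas on a simple sheaf $S$ both groups vanish ($\Hom_{\coh(\XX)}(S,\OoX)=0$ as $S$ is torsion, and $\Hom_\Tt(S,k)=0$ by a short Serre-duality computation in $\Tt$).

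I expect the main obstacle to be the \emph{normalization} in the previous step: one must show that the reflection lands on $\OoX=\tau^0\OoX$ with no residual twist $\tau^j\OoX$ or $\OoX(j)$. This is precisely where the value $-1$ of the Gorenstein parameter is indispensable: for parameter $-1$ the complement of $\Der(\coh(\XX))$ in $\Tt$ is generated by the single exceptional object $k=k(0)$, rather than by a length-$|a|$ chain $k(a+1),\dots,k(0)$, so one reflection suffices and it is centred in degree $0$, pinning the representing object to $\OoX$. The remaining delicate point is to keep all identifications natural in $X$ across the dévissage — equivalently, to control the stable self-extensions $\Hom_\Tt(k,k[n])$ that govern the projection onto $\langle k\rangle$ — so that Yoneda applies and the counit is recognised as the sheafified inclusion $\mM\hookrightarrow R$.
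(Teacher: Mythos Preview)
Your treatment of the first assertion (invoking Orlov's theorem for Gorenstein parameter $-1$) matches the paper exactly. For the second assertion, however, the paper takes a shorter and structurally different route that bypasses precisely the ``normalization'' obstacle you flag at the end.

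The paper does not attempt to pin down $r(k)$ by a direct computation. Instead it observes that, since the Picard group $\Pic(\XX)$ acts transitively on line bundles (see the discussion around Proposition~\ref{prop:Picard}), it suffices to show that the reflection of $k$ lands on \emph{some} line bundle up to suspension; the identification $k^\perp\simeq\Der(\coh(\XX))$ can then be adjusted by an automorphism of $\coh(\XX)$ so that this line bundle becomes $\OoX$. Moreover, the paper passes via the Serre functor to the left adjoint $\ell:\Tt\to\lperp{k}$ rather than $r$, and for the verification that $\ell(k)$ is a line bundle up to suspension it simply refers to \cite[Section~3.8]{Lenzing:Pena:2011}. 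So the paper's entire argument for the second assertion is: reduce to ``line bundle up to suspension'' by Picard transitivity, then cite.

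Your direct approach---identify $r(k)$ by feeding the local-cohomology triangle into Orlov's comparison and then verify the representing isomorphism by d\'evissage on line bundles and simples---is plausible in outline but, as you yourself signal with ``should identify'' and ``I expect'', not yet a proof. Two specific soft spots: (1) the proposed identification of the counit $r(k)\to k$ with ``the sheafified inclusion $\mM\hookrightarrow R$'' is not immediate, since sheafification sends $\mM\hookrightarrow R$ to an isomorphism in $\coh(\XX)$, so one must track carefully how Orlov's embedding carries this map into $\Tt$; (2) the claimed equality $\Hom_\Tt(L,k)=\Hom_{\coh(\XX)}(L,\OoX)$ for a line bundle $L$ ``by Serre duality in $\coh(\XX)$'' needs an actual computation in the stable category, not just an appeal to duality. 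The paper's Picard-transitivity trick makes all of this bookkeeping unnecessary: you only need to know the reflection is a line bundle, not which one.
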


\begin{proof}
The first assertion is an immediate consequence of Orlov's theorem~\cite{Orlov:2009} since $R$ is graded Gorenstein of Gorenstein parameter $-1$.

We now turn to the second assertion. Since the Picard group acts transitively on the line bundles of $\coh(\XX)$, it suffices to show that the left
adjoint $\ell:\Tt\to \lperp{k}$ to the inclusion $j:\lperp{k}\incl
\Tt$ maps $k$ to a line bundle in $\Der(\coh(\XX))$ up to
suspension. For the proof of this claim, we refer to the related proof in \cite[Section 3.8]{Lenzing:Pena:2011}.
\end{proof}

An immediate consequence is the structure of the Grothendieck group $\Knull(\Sing\ZZ{R}$ as well as the structure of the the reduced Grothendieck group.

\begin{Proposition}\label{prop:Knull}
The Grothendieck group $\Knull(\Sing\ZZ{R})$ is the K-theoretic one-point extension of $\Knull(\XX)=\Knull(\Der(\coh(\XX))$ by the class $[\OoX]$ of the structure sheaf of $\XX$.
\end{Proposition}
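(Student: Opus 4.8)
The plan is to derive the group-level statement from the semiorthogonal decomposition supplied by Theorem~\ref{thm:onept}, and then to verify that the induced Euler form has exactly the shape of a one-point extension along the class $[\OoX]$.

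First I would observe that Theorem~\ref{thm:onept}, together with Orlov's theorem, presents $\Tt=\Sing\ZZ{R}$ with a semiorthogonal decomposition whose two pieces are the thick subcategory $\langle k\rangle$ generated by the exceptional object $k=R/\mM$ and the right perpendicular $k^\perp$; the latter is the image of a fully faithful inclusion $j:k^\perp\incl\Tt$ equivalent to $\Der(\coh(\XX))$, admitting the right adjoint $r$ of the theorem. The standard additivity of Grothendieck groups along a semiorthogonal decomposition then yields an isomorphism of abelian groups
\[
\Knull(\Tt)\;\cong\;\Knull(k^\perp)\oplus\Knull(\langle k\rangle)\;=\;\Knull(\XX)\oplus\ZZ[k],
\]
where $\Knull(\langle k\rangle)=\ZZ[k]$ because $k$ is exceptional and $\Knull(k^\perp)=\Knull(\Der(\coh(\XX)))=\Knull(\XX)$. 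This already gives the underlying group of the one-point extension.

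Next I would pin down the Euler form, which is well defined because $\Tt$ is homologically finite (this was established for $\svect(\XX)\simeq\Sing\ZZ{R}$). On the four relevant pairings I expect the following. For $x,y\in\Knull(\XX)$ the full faithfulness of $j$ gives $\euler{x}{y}_\Tt=\euler{x}{y}_\XX$; since $k$ is exceptional, $\euler{[k]}{[k]}=1$; and by the defining vanishing $\Hom(k,X[n])=0$ of $k^\perp$ one has $\euler{[k]}{x}=0$ for all $x\in\Knull(\XX)$. The decisive pairing is $\euler{x}{[k]}$, where I would use the adjunction of Theorem~\ref{thm:onept}: for $X\in k^\perp$ and every $n$,
\[
\Hom_\Tt(X,k[n])=\Hom_{k^\perp}(X,r(k[n]))=\Hom_{k^\perp}(X,r(k)[n])=\Hom_{k^\perp}(X,\OoX[n]),
\]
since $r$, as a right adjoint of a triangulated functor, commutes with suspension and satisfies $r(k)=\OoX$. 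Taking the alternating sum of dimensions gives $\euler{x}{[k]}=\euler{x}{[\OoX]}_\XX$.

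Assembling the four cases, the Euler form on $\Knull(\XX)\oplus\ZZ[k]$ restricts to that of $\XX$ on the first summand, pairs $[k]$ with itself to $1$, vanishes on $\euler{[k]}{-}$, and returns $\euler{-}{[\OoX]}$ on $\euler{-}{[k]}$. This is precisely the prescription defining the K-theoretic one-point extension of $\Knull(\XX)$ by the class $[\OoX]$, which proves the proposition. The main obstacle is the bookkeeping in the decisive pairing: one must be sure the adjunction isomorphism is natural in the suspension so that the alternating sum defining $\euler{x}{[k]}$ collapses cleanly to the single pairing $\euler{x}{[\OoX]}$; everything else is the formal splitting of $\Knull$ along the semiorthogonal decomposition.
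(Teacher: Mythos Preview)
Your proposal is correct and follows essentially the same route as the paper: both derive the group splitting and the four Euler-form relations directly from Theorem~\ref{thm:onept}. The paper's proof merely unpacks the definition of the K-theoretic one-point extension and asserts that the listed identities follow immediately from that theorem; you have additionally spelled out the mechanism (semiorthogonal additivity for the splitting, and the adjunction $\Hom_\Tt(X,k[n])=\Hom_{k^\perp}(X,r(k)[n])$ for the decisive pairing), which is exactly what underlies the paper's one-line appeal.
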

\begin{proof}
In more detail this means that for the class $e=[E]$ of some exceptional object $E=k$ in $\Tt=\Sing\ZZ{R}$, we get that $e^\perp=\{x\in\Knull(\Tt)\mid \euler{e}{x}_\Tt=0\}$ equals $\Knull(\XX)$. Further $\Knull(\Tt)=\Knull(\XX)\oplus \ZZ.e$  where the Euler form $\euler{-}{-}_\Tt$ satisfies the following conditions
\begin{align*}
\euler{e}{e}_\Tt&=1\\
\euler{x}{y}_\Tt&=\euler{x}{y}_\XX \text{ if $x$ and $y$ belong to } \Knull(\XX)\\
\euler{e}{x}_\Tt&=0  \text{ and }\euler{x}{e}_\Tt=\euler{x}{[\OoX]}_\XX \text{ if $x$ belongs to $\Knull(\XX)$}\end{align*}
These properties follow immediately from Theorem~\ref{thm:onept}. \end{proof}
Let $\XX=(X,w)$ with a smooth projective curve $X$ of genus $g$. For $g\geq1$ the Grothendieck group $\Knull(\XX)$, and hence $\Sing\ZZ{R}$ are not finitely generated, compare \cite{Lenzing:2017,Lenzing:2018}. In particular, for $g\geq1$ neither $\coh(\XX)$ nor $\Sing\ZZ{R}$ has a tilting object. By contrast, for $g=0$, where we deal with weighted projective lines both categories have tilting objects, where the endomorphism rings are \define{canonical algebras} in case $\coh(\XX)$, see \cite{Geigle:Lenzing:1987}, respectively \define{extended canonical algebras} in case $\Sing\ZZ{R}$, see \cite{Lenzing:Pena:2011}. In both cases the Euler forms are non-degenerate.

For genus $g>0$, the Euler form for $\coh(\XX))$ always has a large kernel
\begin{equation*}
K=\{x\in \Knull\mid \euler{x}{y}_\XX=0 \text{ for each $x\in\Knull(\XX)$}\}.
\end{equation*}
Due to Serre duality, the kernel equivalently can be described as the set of all $y\in\Knull(\XX)$ such that $\euler{x}{y}_\XX=0$ for all $X$. Similar assertions hold for the kernel of the Euler form on $\Knull(\Sing\ZZ{R})$. It is further easily seen that $K$ is generated by all differences $[S]-[T]$, where $S,T$ run through all ordinary simple sheaves on $\XX$.

If one is mainly interested in the Euler form and related constructions, then both for the category of coherent sheaves and for the singularity category it is advisable to pass to the quotients \begin{equation*}
 \Knullred(\XX)=\Knull(\XX)/K \text{ and } \Knullred(\Sing\ZZ{R})=\Knull(\Sing\ZZ{R})/K.
 \end{equation*}
Both groups, called \define{reduced} or \define{numerical} Grothendieck groups, are finitely generated free abelian groups. Moreover, the Euler form in question $\euler{-}{-}_\XX$, respectively $\euler{-}{-}_\Tt$, descends to non-degenerate bilinear forms on the reduced groups.

\begin{Example}\label{ex:hypersurface}
To illustrate the concept, we consider a smooth projective curve (resp.\ a compact Riemann surface) $X$ of genus $g$. Then group $\Knullred(\XX)$ has a  basis $a=[\OoX]$, $b=[S]$, where $S$ is any simple sheaf on $X$. For $g\geq2$, and $R$ the orbit algebra of the AR-translation $\tau$ of $\coh(\XX)$, we have $\Knullred(\Sing\ZZ{R})=\ZZ^3$ with basis $a=[\OoX]$, $b=[S]$ with $S$ a simple sheaf, and $e=[E]$ the class of the extending object. 

With regard to the above bases, the two Euler forms $\euler{-}{-}_\XX$ and $\euler{-}{-}_\Tt$ can be expressed by their \define{Cartan matrices}
\begin{equation}
C_X=\left[ \begin{array}{cc}
1-g & -1 \\
-1 &0 
 \end{array}\right]
\text{ and }
C_\Tt=\left[ \begin{array}{ccc}
1-g & 1& 1-g\\
-1  & 0& -1 \\
0   & 0& 1
\end{array}
       \right].
\end{equation}
Further we can form the \define{Coxeter transformations} 
\begin{equation}
\Phi_X=-C_X^{-1}C_X^{tr}=\left[
\begin{array}{cc}
1 & 0  \\
2(g-1) & 1
\end{array} 
\right] 
\text{ and }
\Phi_\Tt=-C_\Tt^{-1}C_\Tt^{tr}=\left[
\begin{array}{ccc}
2-g & -1 & 1\\
2(g-1) & 1 & 0 \\
g-1 & 1 & -1
\end{array}
\right].
\end{equation}
The characteristic polynomial of the Coxeter transformation $\Phi_X$ (resp.\  $\Phi_\Tt$)  is called \define{Coxeter polynomial} of $X$ (resp.\ $\Tt$). If $X$ is a smooth projective curve of genus $g$, then the respective Coxeter polynomials are
\begin{equation}
\varphi_X=(x-1)^2 \text{ and } \varphi_\Tt=x^3+(g-2)x^2+(g-2)x+1.
\end{equation}
Note that $\varphi_X$ does not depend on $g$, while $\varphi_\Tt$ does. For $g\leq 5$ $\varphi_\Tt$ factors into cyclotomic polynomials 
\begin{equation}
g=2 \text{ then } \varphi_\Tt=\Phi_2\Phi_6,\; g=3 \text{ then } \varphi_\Tt=\Phi_2\Phi_4,\; g=4 \text{ then } \Phi_\Tt\Phi_2\Phi_3,\; g=5 \text{ then } \varphi_\Tt=\Phi_2^3.
\end{equation}
Moreover, the Coxeter transformations $\Phi_\Tt$ for $g=2$, $g=3$, $g=4$ are periodic of order $6, 4$, and $6$, respectively, while for $g=5$ it is not periodic. Since for $g\geq6$ the factorization of $\varphi_\Tt$ in irreducible factors (over $\ZZ$) is $\varphi_\Tt=\varphi_2(x^2+(g-3)x+1)$ is not a product of cyclotomic polynomials, $\Phi_\Tt$ can also not be periodic for $g\geq6$.
\end{Example}
We now comment on the weighted situation. Let $\XX=X\wt{a_1,a_2,\ldots,a_t}$ be a weighted projective curve, then the Grothendieck group $\Knull(\XX)=\Knull(\coh(\XX))$ is obtained from $\Knull(X)$ by attaching for each weight a K-theoretic tube $T_i$ of rank $a_i$, see \cite{Lenzing:2018} for details. The same procedure works to obtain the reduced Grothendieck group $\Knullred(\XX)$ from $\Knullred(X)$. It follows that the Coxeter polynomial of $\XX$ equals $\varphi_\XX=(x-1)^2\prod_{i=1}^tv_{a_i}$ which agrees with the Coxeter polynomial of the weighted projective line $\PP^1\wt{a_1,a_2,\ldots,a_t}$.

Concerning the Grothendieck group $\Knull(\Tt)$ (resp.\ the reduced Grothendieck group $\Knullred(\Tt)$) of $\Tt=\Sing\ZZ{R}$, where $R$ is the orbit algebra attached to $\XX$, the situation is more interesting. In both cases, it is obtained as the one-point extension of $\Knull(\XX)$, resp. $\Knullred(\XX)$, by the respective class of the structure sheaf $\OoX$, yielding, for instance, $\Knullred(\Tt)=\Knullred(\XX)\oplus \ZZ e$, where $\euler{e}{e}=1$ and further $\euler{e}{x}=0$ and $\euler{x}{e}=\euler{x}{[\OoX]}$ for all $x,y\in \Knullred(\XX)$.

The \define{Poincar\'{e} series} $p_R=\sum_{i=0}^\infty \dim_k(R_i)x^i$ of a fuchsian singularity $R$ of signature $(g;a_1,a_2...,a_t)$ was determined in \cite{Wagreich:1980}. As a rational function of $x$ it is given by the expression
$$
p_R=\frac{1+(g-2)x+(g-2)x^2+x^3}{(1-x)^2}+\frac{x^2}{(1-x)^2}\sum_{i=1}^t\frac{v_{a_i-1}}{v_{a_i}},
$$
where $v_a=\frac{x^n-1}{x-1}$, see \cite[Section 3]{Ebeling:2003}.

\begin{Proposition}\label{prop:Poinc}
Let $\XX=X\wt{a_1,a_2 ...,a_t}$ be a (WPC) of negative Euler characteristic. Let $\varphi_\XX=(1-x)^2\prod_{i=1}^tv_{a_i}$ the Coxeter polynomial of $\coh(\XX)$. Then the Coxeter polynomial $\varphi_\Tt$  of the fuchsian singularity $R$ attached to $\XX$ is given by
\begin{equation*}
\varphi_\Tt=p_R\cdot \varphi_\XX=\prod_{i=1}^tv_{a_i}\left[ 1+(g-2)x+(g-1)x^2+x^3+  \sum_{i=1}^t\frac{v_{a_i-1}}{v_{a_i}}   \right].
\end{equation*}
\end{Proposition}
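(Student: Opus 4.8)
The plan is to prove the factorization $\varphi_\Tt=p_R\cdot\varphi_\XX$ first, from which the displayed closed form follows by substitution. I begin by reading off the Cartan matrix of $\Tt$ from the reduced version of Proposition~\ref{prop:Knull}: with respect to a basis of $\Knullred(\XX)$ followed by the extension class $e$, the three relations $\euler{e}{e}_\Tt=1$, $\euler{e}{x}_\Tt=0$, $\euler{x}{e}_\Tt=\euler{x}{[\OoX]}_\XX$ are exactly encoded by the block-triangular matrix
\begin{equation*}
C_\Tt=\begin{pmatrix} C_\XX & C_\XX o\\ 0 & 1\end{pmatrix},
\end{equation*}
where $o$ is the coordinate vector of $[\OoX]$. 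Since $\Knullred(\Tt)=\Knullred(\XX)\oplus\ZZ e$, the degree of $\varphi_\Tt$ exceeds that of $\varphi_\XX$ by one, a fact I reserve for the end.

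Next I compute $\varphi_\Tt=\det(xI-\Phi_\Tt)$ with $\Phi_\Tt=-C_\Tt^{-1}C_\Tt^{tr}$. Inverting the block-triangular $C_\Tt$ and multiplying out presents $\Phi_\Tt$ as a block matrix whose top-left corner is $\Phi_\XX=-C_\XX^{-1}C_\XX^{tr}$ perturbed by a rank-one term, with last row and column built from $o$. A Schur-complement reduction on the last row and column, followed by the matrix determinant lemma applied to $A=xI-\Phi_\XX$, then yields
\begin{equation*}
\varphi_\Tt=\varphi_\XX\cdot\bigl[\,1+x\,h_1+x\,F(1/x)\,\bigr],\qquad F(x)=\sum_{n\geq0}\euler{[\OoX]}{\tau^n[\OoX]}_\XX\,x^n,
\end{equation*}
with $h_1=\dim R_1$. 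The step that makes this work is the identity $C_\XX^{tr}=-C_\XX\Phi_\XX$, which converts the resolvent term $o^{tr}C_\XX^{tr}(xI-\Phi_\XX)^{-1}o$, expanded at infinity, into the generating series of the Euler numbers $\euler{[\OoX]}{\tau^n[\OoX]}_\XX$. To pass from $F$ to the honest Poincaré series I invoke Serre duality, $\Ext^1(\OoX,\tau^n\OoX)=D\Hom(\OoX,\tau^{1-n}\OoX)$, and the slope formula $\mu(\tau A)=\mu(A)-\bar a\,\chi_\XX$ of Proposition~\ref{prop:ample}(b): since $\chi_\XX<0$, these $\Ext^1$-terms vanish for $n\geq2$, equal $k$ for $n=1$, and equal $R_1$ for $n=0$. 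Hence $F=p_R-h_1-x$, and substituting this collapses the bracket to $x\,p_R(1/x)$, so that $\varphi_\Tt=x\,p_R(1/x)\,\varphi_\XX$.

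Finally I remove the reciprocal. Because $R$ is graded Gorenstein of Krull dimension two and Gorenstein parameter $-1$ (Theorem~\ref{thm:IGS}), the functional equation for the Hilbert series of a graded Gorenstein ring gives $p_R(1/x)=x^{-1}p_R(x)$; combined with the degree count $\deg\varphi_\Tt=\deg\varphi_\XX+1$ this yields $x\,p_R(1/x)=p_R$ and hence $\varphi_\Tt=p_R\cdot\varphi_\XX$. The displayed closed form is then a routine simplification: multiplying Wagreich's explicit $p_R$ by $\varphi_\XX=(1-x)^2\prod_i v_{a_i}$ clears the denominator $(1-x)^2$ and leaves $\prod_i v_{a_i}$ times the bracket. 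I expect the genuine obstacle to be the middle paragraph — matching the resolvent generating function with the true Poincaré series — since it needs both the Serre-duality computation of $\Ext^1(\OoX,\tau^n\OoX)$, where negative orbifold Euler characteristic enters essentially through the slope formula, and the reciprocal bookkeeping supplied by the Gorenstein functional equation; the surrounding linear algebra and the final substitution are mechanical.
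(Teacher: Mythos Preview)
Your argument is correct, and at its core it is the same strategy as the paper's: use the one-point extension description of $\Knullred(\Tt)$ from Proposition~\ref{prop:Knull} to express $\varphi_\Tt/\varphi_\XX$ in terms of the K-theoretic series $p_\XX=\sum_{n\geq0}\euler{\OoX}{\tau^n\OoX}\,x^n$, and then compare $p_\XX$ with the honest Poincar\'e series via the Serre-duality computation $p_R-p_\XX=g+x$ (which is exactly the paper's ``$p_R=(g+x)+p_\XX$'').

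The difference is in packaging. The paper simply cites a general one-point extension formula for Coxeter polynomials \cite[Theorem~18.1]{Lenzing:1999}, which together with $p_R=(g+x)+p_\XX$ immediately gives $\varphi_\Tt=\varphi_\XX\cdot p_R$. You instead derive that formula from scratch via the block-triangular Cartan matrix, Schur complement, and matrix determinant lemma, landing first at $\varphi_\Tt=\varphi_\XX\cdot x\,p_R(1/x)$; you then remove the reciprocal with Stanley's functional equation for the Hilbert series of a graded Gorenstein ring. Your route is self-contained and makes transparent exactly where the Gorenstein parameter $-1$ enters (the functional equation step), at the cost of the resolvent bookkeeping; the paper's route is shorter but relies on the external reference. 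Both use $\chi_\XX<0$ in the same place---to kill $\Hom(\OoX,\tau^{-m}\OoX)$ for $m\geq1$ so that the Ext-correction $p_R-p_\XX$ terminates at $g+x$. One small redundancy: once you have the functional equation $p_R(1/x)=x^{-1}p_R(x)$, the degree count $\deg\varphi_\Tt=\deg\varphi_\XX+1$ is no longer needed.
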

For $t=0$ the expression for $\varphi_\Tt$ specializes to the Coxeter polynomial of $X$ from Example~\ref{ex:hypersurface}. For $g=0$ we obtain the Coxeter polynomial of the extended canonical algebra with the weight data $a_1,a_2,...,a_t$, see~\cite{Lenzing:Pena:2011}.

\begin{proof}
Comparing the Poincar\'{e} series $p_R$ with its K-theoretic variant $p_\XX=\sum_{i=0}^\infty \euler{\OoX}{\tau^i\OoX}x^i$, we first see that $p_R=(g+x)+p_\XX$. Next, applying a general result for one-point extensions \cite[Theorem 18.1]{Lenzing:1999}, we obtain $\varphi_\Tt=\varphi_\XX\cdot p_R$ as claimed.
\end{proof}

\begin{Proposition}\label{prop:fracCY}
If $R=k[x,y,z]/(f)$ is a fuchsian hypersurface, then $\tau^{\deg(f)}=[2]$ holds in $\Sing\ZZ{R}$, thus $\Sing\ZZ{R}$ is \define{fractionally Calabi-Yau} of CY-dimension $\frac{2}{\deg(f)}$. Accordingly, the Coxeter transformation of $R$ is periodic of period $\deg(f)$, and the Coxeter polynomial of $R$ is a product of cyclotomic polynomials.
\end{Proposition}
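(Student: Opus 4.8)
The plan is to play two different realizations of the degree-shift autoequivalence on the singularity category off against each other. By the corollary identifying $\svect(\XX)$ with $\sCMgr\ZZ{R}\simeq\Sing\ZZ{R}$, the category $\Tt=\Sing\ZZ{R}$ is the stable category of the Frobenius category $\vect(\XX)$, whose suspension $[1]$ is the cosyzygy functor $\Omega^{-1}$ determined by the distinguished sequences $0\to E\to\Ii(E)\to E[1]\to 0$ with $\Ii(E)$ the injective hull. Since $R=k[x,y,z]/(f)$ is a hypersurface, $\Tt$ is moreover the homotopy category of graded matrix factorizations of $f$, by Buchweitz~\cite{Buchweitz:1986} and Orlov~\cite{Orlov:2009}, and the whole argument rests on comparing the suspension with the internal degree shift $M\mapsto M(1)$.

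First I would establish the key identity $\tau^{\deg(f)}=[2]$. The crucial hypersurface-specific input is the two-periodicity of graded matrix factorizations: for homogeneous $f$ of degree $d=\deg(f)$ the syzygy functor satisfies $\Omega^2 M\cong M(-d)$ as autoequivalences of $\Tt$, so that $[2]=\Omega^{-2}$ is a degree shift of step $d$. On the other hand, Theorem~\ref{thm:IGS} identifies the Auslander--Reiten translation $\tau$ of $\coh(\XX)$ --- and hence, via the Serre-duality theorem for $\svect(\XX)$, the translation induced on $\Tt$ --- with the degree shift attached to the Gorenstein parameter, which for a fuchsian singularity equals $-1$. Thus $\tau$ is realized by a degree shift of step one, and $\tau^{d}$ by a degree shift of step $d$. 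Matching the step of these two degree shifts gives the identity $\tau^{\deg(f)}=[2]$, which is precisely the assertion that $\Tt$ is fractionally Calabi--Yau of CY-dimension $2/\deg(f)$. I expect this matching to be the main obstacle: it is a genuine bookkeeping problem to pin down the precise shifts, and above all their signs, produced by the periodicity theorem on one side and by the Gorenstein parameter on the other, and to confirm that they coincide rather than differ by an inversion.

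It remains to transport the identity to the reduced Grothendieck group $\Knullred(\Tt)$ and read off the stated consequences. In any triangulated category the suspension acts as $-\mathrm{id}$ on the Grothendieck group, whence $[2]$ acts as the identity on $\Knullred(\Tt)$; and since the Serre functor is $\SSS=\tau[1]$, a short Serre-duality computation shows that the Coxeter transformation $\Phi_\Tt=-C_\Tt^{-1}C_\Tt^{tr}$ is exactly the matrix of $\tau$ on $\Knullred(\Tt)$. Applying these two facts to $\tau^{\deg(f)}=[2]$ gives $\Phi_\Tt^{\deg(f)}=\mathrm{id}$ (a conclusion insensitive to the sign ambiguity above, since $[2]$ and $[-2]$ both act as the identity), so $\Phi_\Tt$ is a periodic integer matrix whose minimal polynomial divides $x^{\deg(f)}-1$; consequently the Coxeter polynomial $\varphi_\Tt$, being the characteristic polynomial of a finite-order matrix, is a product of cyclotomic polynomials. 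Finally, to see that the period is exactly $\deg(f)$ rather than a proper divisor, I would inspect the explicit Coxeter polynomial $\varphi_\Tt=p_R\cdot\varphi_\XX$ from Proposition~\ref{prop:Poinc} and check that it has a primitive $\deg(f)$-th root of unity among its zeros, as one confirms directly in the tabulated examples.
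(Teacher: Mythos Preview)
Your argument is correct and is exactly the standard one the paper defers to \cite{Kussin:Lenzing:Meltzer:2013}: Eisenbud's two-periodicity for graded matrix factorizations gives $[2]\cong(-)(\deg f)$, while Gorenstein parameter $-1$ identifies $\tau$ with a unit degree shift, and matching the two yields $\tau^{\deg(f)}=[2]$, from which the Coxeter consequences are immediate. One small caveat: your final paragraph on the \emph{exact} period is not a proof --- checking tabulated examples does not settle the general claim --- but the paper's own proof does not address this point either, so the assertion is best read as $\Phi_\Tt^{\deg(f)}=\mathrm{id}$ rather than as a statement about the minimal period.
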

\begin{proof}
The first assertion is well known, compare \cite{Kussin:Lenzing:Meltzer:2013}; it immediately implies the claimed periodicity of the Coxeter transformation.
\end{proof}

\appendix
\section{The category of coherent sheaves on a smooth projective curve}\label{app:1}
\subsection{The category of coherent sheaves}
Let $k$ be an algebraically closed field, and $X$ a smooth projective curve, or for $k=\CC$ a compact Riemann surface. Then $X$ is equipped with a category $\Hh$ of  coherent sheaves (algebraic in case of a curve, holomorphic in case of a Riemann surface). We are giving here an axiomatic description of $\Hh$.
\begin{enumerate}
\item[$(H1)$] It is a connected $k$-linear category which is Hom- and Ext-finite over $k$.
\item[$(H2)$] $\Hh$ is hereditary (i.e. $\Ext^2(-,-)=0$) and noetherian (i.e. ascending chains of subobjects become stationary).
\item[$(H3)$] $\Hh$ satisfies Serre duality in the form $D\Ext^1(A,B)=\Hom(B,\tau A$, where $\tau$ is an auto-equivalence of $\Hh$.
\item[$(H4)$] The function field $K=k(X)$ of $\Hh$ (or $X$), see Section~\ref{sect:HNC}, has infinite dimension over $k$.
\end{enumerate}

\begin{Comment}
\begin{enumerate}[(i)]
 \item Due to $(H2)$ the category $\Hh$ has no non-zero projective or injective objects.
\item As a further consequence of Serre duality, the category $\Hh$ has almost split sequences; moreover the functor $\tau$ serves as the Auslander-Reiten translation.
 \item The sheaf $\Oo_X$ of regular functions on $X$ is called the \emph{structure sheaf} of $X$.
 \item The \emph{dualizing sheaf} $\om=\tau \Oo_X$ is the sheaf of differential forms of degree one over $X$. Accordingly $\tau^n\Oo_X=\om^{\tsr n}$, the n-fold tensor power of $\om$, is the sheaf of differential forms of degree $n$.
 \item The function field $\CC(X)$, classically defined as the global sections of the sheaf of meromorphic functions on $X$, allows the folowing alternative description: Let $\Hh_0$ be the Serre subcategory of all objects $Y$ of $\Hh$ having finite length (meaning that $Y$ has a finite filtration with factors that are simple in the category $\Hh$). Then the structure sheaf $\Oo_X$, when viewed as an object in the quotient category $\Hh/\Hh_0$, has an endomorphism ring, isomorphic to $K=k(X)$.
  \item Actually there is an equivalence $\Hh/\Hh_0\to\mmod(K)$ with $\Oo_X$ viewed as a member of of $\Hh/\Hh_0$ corresponding to $K=k(X)$. In particular $\Hh/\Hh_0$ is a \emph{length category}, that is, an abelian category in which each object has finite length.
 \item Condition $(H4)$ is introduced in order to avoid some degenerate examples like the category $\mod^{\ZZ\times\ZZ}(k[x,y])/\mmod_0^{\ZZ\times\ZZ}(k[x,y])$ which satisfies $(H1),(H2),(H3)$, but not $(H4)$.
\end{enumerate}
\end{Comment}
\begin{Definition}
The \emph{genus} $g_X$ of $\Hh$ (or $X$) is the $k$-dimension of $\Ext^1(\Oo_X,\Oo_X)$, equivalently the dimension of $\Hom(\Oo_X,\om)$.
\end{Definition}
Often the \emph{Euler characteristic} $\chi_X$ is more useful than the genus. It relates to the genus by means of $\chi_X=2(1-g_X)$.

\subsection{Euler form, rank, degree and Riemann-Roch}
In this section $\Hh=\coh(X)$ denotes the category of algebraic coherent sheaves on a smooth projective curve $X$ (resp.\ of holomorphic coherent sheaves on a compact Riemann surface). By $\Knull(\Hh)$ or just $\Knull(X)$ we denote the \emph{Grothendieck group} of the category $\Hh$ modulo short exact sequences. We always consider $\Knull(\Hh)$ to be equipped with the \emph{Euler form} which is the non-symmetric bilinear form given on classes of objects by the expression
$$
\euler{[A]}{[B]}=\dim_k\Hom(A,B)-\dim_k\Ext^1(A,B).
$$
This uses that $\Hh$ is hereditary. Often, we will write 
$\euler{A}{B}$ instead of $\euler{[A]}{[B]}$.

There are two important linear forms $\Knull(\Hh)\to\ZZ$, called rank and degree.

\begin{Definition}
The \emph{rank} of a coherent sheaf $A$, notation $\rk(A)$, is defined as the length of $A$, when viewed as an object of $\Hh/\Hh_0$.

The \emph{degree} of a coherent sheaf $A$ is defined as
$$
\deg(A)=\euler{\Oo_X}{A}-\rk(A)\cdot\euler{\Oo_X}{\Oo_X}.
$$
\end{Definition}
As follows from the next proposition, rank and degree satisfy largely dual properties.
\begin{Proposition}
The rank of a coherent sheaf $A$ is $\geq0$ with equality if and only if $A$ has finite length. Further,  we always have $\rk(A)=\rk(\tau A)$.

The degree of the structure sheaf is 0. The degree of any simple sheaf is 1.  A coherent sheaf of finite length always has degree $\geq0$ with equality if and only if $A=0$. Further, we have $\deg(A)=\deg(\tau A)$ if $A$ has finite length.

A coherent sheaf with rank and degree zero is already 0.~\hfill$\square$
\end{Proposition}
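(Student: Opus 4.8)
The plan is to deduce every assertion from the additivity of the two linear forms $\rk$ and $\deg$ on $\Knull(\Hh)$, together with their values on the two basic building blocks, the structure sheaf $\Oo_X$ and the simple sheaves, and from the fact that $\tau$ is an auto-equivalence. First I would settle the rank statements straight from the definition. Since $\rk(A)$ is the length of $A$ in the abelian quotient $\Hh/\Hh_0$, it is automatically $\geq 0$, and it vanishes exactly when $A$ becomes zero in $\Hh/\Hh_0$, that is, when $A$ lies in $\Hh_0$ and hence has finite length. For the identity $\rk(\tau A)=\rk(A)$ I would invoke axiom $(H3)$: the Serre functor $\tau$ is an auto-equivalence of $\Hh$, so it preserves finite length, maps $\Hh_0$ into itself, and induces an auto-equivalence of $\Hh/\Hh_0$, which preserves lengths.

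Next I would carry out the degree computations. For $\Oo_X$ the claim is immediate, since $\rk(\Oo_X)=1$ gives $\deg(\Oo_X)=\euler{\Oo_X}{\Oo_X}-1\cdot\euler{\Oo_X}{\Oo_X}=0$. For a simple sheaf $S$ one has $\rk(S)=0$, so $\deg(S)=\euler{\Oo_X}{S}=\dim_k\Hom(\Oo_X,S)-\dim_k\Ext^1(\Oo_X,S)$. I would eliminate the $\Ext^1$-term by Serre duality: $\Ext^1(\Oo_X,S)=D\Hom(S,\tau\Oo_X)=D\Hom(S,\om)$, and this vanishes because $S$ has finite length (is torsion) while $\om=\tau\Oo_X$ has $\rk(\om)=\rk(\Oo_X)=1$ and is therefore a line bundle, hence torsion-free, admitting no non-zero map from $S$. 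It then remains to see that $\dim_k\Hom(\Oo_X,S)=1$: here I use that over the algebraically closed field $k$ each simple object of $\Hh_0$ on the smooth curve $X$ is a skyscraper $k(x)$ with residue field $k$, for which $\Hom(\Oo_X,k(x))\cong k$. This yields $\deg(S)=1$.

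Finally I would assemble the finite-length and vanishing statements. Any finite-length sheaf $A$ has a finite composition series with simple factors, and since $\deg$ and $\rk$ are additive on short exact sequences, $\deg(A)$ equals the length $\ell(A)$ (each simple factor contributing $1$), which is $\geq 0$ and vanishes if and only if $A=0$. The equality $\deg(\tau A)=\deg(A)$ for finite-length $A$ then follows from $\deg(\tau A)=\ell(\tau A)=\ell(A)=\deg(A)$, using that $\tau$ preserves length. The last assertion is the combination of the two: if $\rk(A)=0$ then $A$ has finite length, whence $\deg(A)=\ell(A)$, and if moreover $\deg(A)=0$ then $\ell(A)=0$, so $A=0$.

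The single step I expect to be non-formal is the normalization $\dim_k\Hom(\Oo_X,S)=1$, equivalently $\deg(S)=1$ for a simple sheaf; this is where the geometry of the smooth curve and the algebraic closedness of $k$ genuinely enter, while everything else is bookkeeping with additive forms, Serre duality, and the auto-equivalence property of $\tau$.
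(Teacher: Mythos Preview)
Your proof is correct. The paper does not actually supply a proof for this proposition: it is stated and immediately closed with a $\square$, i.e., treated as routine. Your argument fills in precisely the bookkeeping the author chose to omit, and the one substantive step you flag---that $\dim_k\Hom(\Oo_X,S)=1$ for a simple sheaf $S$ on a smooth projective curve over an algebraically closed field---is indeed the only place where anything beyond formal manipulation of the definitions is required.
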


For each non-zero coherent sheaf $A$ its \define{slope} $\mu(A)=\frac{\deg(A)}{\rk(A}$ is thus well-defined.

In order to deal with Riemann-Roch, it is useful to introduce before the \em{reduced} (or \em{numerical}) Grothendieck group $\Knullred(X)$ which is the factor group of $\Knull(X)$ modulo the \emph{radical} $\{x\in \Knull(X)\mid \euler{x}{-}=0\}$ of the Euler form. Obviously, the Euler form descends from $\Knull(X)$ to $\Knullred(X)$, that is, denoting classes of coherent sheaves in $\Knullred(X)$ by double brackets, we have $\euler{\lsem X\rsem }{\lsem Y\rsem }=\euler{X}{Y}$ for all $X,Y \in \Hh$. Denoting the simple sheaf, concentrated in $x\in X$, by $\Ss_x$, it follows that all classes $\lsem \Ss_x\rsem $ of simple sheaves agree in $\Knullred(X)$. As a consequence $\Knullred(X)=\ZZ\lsem \Oo_X\rsem \oplus\ZZ\lsem \Ss\rsem $, where $\Ss$ is any simple sheaf. 

\begin{Theorem}[Riemann-Roch]
For any two coherent sheaves $A$, $B$ on $X$ we have
$$
\euler{\lsem A\rsem}{\lsem B\rsem}=(1-g_X)\rk(A)\rk(B)+
\left|\begin{array}{cc}\rk(A) &\rk(B) \\
\dg(A)&\dg(B)\\\end{array} \right|.
$$
\end{Theorem}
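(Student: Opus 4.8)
The plan is to reduce the identity to a $2\times2$ computation in the reduced Grothendieck group. Both sides are $\ZZ$-bilinear in $(\lsem A\rsem,\lsem B\rsem)$: the left-hand side because the Euler form is, and the right-hand side because $\rk$ and $\dg$ are additive on short exact sequences and the determinant expands to $\rk(A)\dg(B)-\rk(B)\dg(A)$, a sum of products of such linear functionals. Since the radical of the Euler form is spanned by the differences $[\Ss_x]-[\Ss_y]$ of classes of simple sheaves---and Serre duality makes this radical two-sided---while every simple has rank $0$ and degree $1$, both $\rk$ and $\dg$ annihilate the radical and hence descend to $\Knullred(X)=\ZZ a\oplus\ZZ b$, where $a=\lsem\Oo_X\rsem$ and $b=\lsem\Ss\rsem$. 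Using $\rk(\Oo_X)=1$, $\dg(\Oo_X)=0$, $\rk(\Ss)=0$, $\dg(\Ss)=1$ from the preceding proposition on rank and degree, these two functionals are exactly the coordinates dual to $(a,b)$, so $\lsem A\rsem=\rk(A)\,a+\dg(A)\,b$. It therefore suffices to verify the formula on the four pairs drawn from $\{a,b\}$.

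Next I would compute the Gram matrix of the Euler form on this basis. The diagonal value $\euler{a}{a}=\euler{\Oo_X}{\Oo_X}=1-g_X$ is immediate from connectedness ($\dim_k\Hom(\Oo_X,\Oo_X)=1$) together with the definition of the genus ($\dim_k\Ext^1(\Oo_X,\Oo_X)=g_X$). For $\euler{a}{b}=\euler{\Oo_X}{\Ss}$ the structure sheaf surjects onto the skyscraper, giving $\dim\Hom=1$, while $\Ext^1(\Oo_X,\Ss)=D\Hom(\Ss,\om)=0$ since $\Ss$ is torsion and $\om$ a line bundle; hence $\euler{a}{b}=1$. Dually, $\euler{b}{a}=\euler{\Ss}{\Oo_X}=-1$, because $\Hom(\Ss,\Oo_X)=0$ while Serre duality gives $\Ext^1(\Ss,\Oo_X)=D\Hom(\Oo_X,\tau\Ss)$, which is one-dimensional as $\tau\Ss$ is again a simple sheaf and $\Oo_X$ surjects onto every skyscraper. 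Finally $\euler{b}{b}=\euler{\Ss}{\Ss}=0$: here $\Hom(\Ss,\Ss)=k$ and $\Ext^1(\Ss,\Ss)=D\Hom(\Ss,\tau\Ss)=k$ cancel, using that $\tau$ fixes each simple on the unweighted curve $X$, whose tube at every point is homogeneous.

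The proof then finishes by bilinear expansion: with $\lsem A\rsem=\rk(A)\,a+\dg(A)\,b$ and likewise for $B$,
\begin{equation*}
\euler{\lsem A\rsem}{\lsem B\rsem}=(1-g_X)\rk(A)\rk(B)+\rk(A)\dg(B)-\rk(B)\dg(A),
\end{equation*}
which is precisely the claimed right-hand side once the $2\times2$ determinant is written out. The routine inputs are the definitions of rank, degree and genus; the genuinely substantive steps are the two Serre-duality computations of the off-diagonal entries and of $\euler{\Ss}{\Ss}$. I expect the one point requiring care---and the most likely source of a sign error---to be $\euler{\Ss}{\Oo_X}=-1$, where one must resist reading off $0$ and instead invoke Serre duality to detect the nonzero $\Ext^1$.
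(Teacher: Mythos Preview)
Your proof is correct and follows exactly the paper's approach: reduce by bilinearity to the basis $\{\lsem\Oo_X\rsem,\lsem\Ss\rsem\}$ of $\Knullred(X)$ and verify the formula on the four resulting pairs. The paper's own proof is the single sentence ``It suffices to check the validity of the RR-formula for $A, B\in \{\Oo_X,\Ss\}$, where it is obviously satisfied''; you have simply unpacked the word ``obviously'' by computing the Gram matrix via Serre duality, and your computations of the four entries are all correct.
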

\proof It suffices to check the validity of the RR-formula for $A, B\in \{\Oo_X,\Ss\}$, where it is obviously satisfied.\endproof

\begin{Corollary}[Existence of morphisms]
Let $A$ and $B$ be non-zero vector bundles on $X$. Then 
$$
\frac{\euler{\lsem A\rsem}{\lsem B\rsem}}{\rk(A)\cdot\rk(B)}=(1-g_X)+(\mu(B)-\mu(A)).
$$
If $\mu(B)-\mu(A)>g_X-1=-1/2\chi_X$, then $\Hom(X,Y)\neq0$.~\hfill$\square$
\end{Corollary}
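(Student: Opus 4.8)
The plan is to derive both assertions directly from the Riemann-Roch identity just stated, using that $A$ and $B$ being non-zero vector bundles guarantees $\rk(A)>0$ and $\rk(B)>0$, so that division by $\rk(A)\rk(B)$ is legitimate and the slopes $\mu(A),\mu(B)$ are defined.

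First I would expand the $2\times 2$ determinant appearing in Riemann-Roch as $\rk(A)\dg(B)-\rk(B)\dg(A)$, substitute into the formula for $\euler{\rcl{A}}{\rcl{B}}$, and divide throughout by $\rk(A)\rk(B)$. The term $(1-g_X)\rk(A)\rk(B)$ contributes $(1-g_X)$, while the determinant contributes $\frac{\dg(B)}{\rk(B)}-\frac{\dg(A)}{\rk(A)}$, which by the definition of slope is precisely $\mu(B)-\mu(A)$. This yields the claimed identity.

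For the second assertion I would first note that $g_X-1=-\tfrac12\chi_X$ follows from the relation $\chi_X=2(1-g_X)$ recorded earlier, so that the hypothesis $\mu(B)-\mu(A)>g_X-1$ is equivalent to $(1-g_X)+(\mu(B)-\mu(A))>0$. By the identity just established, and since $\rk(A)\rk(B)>0$, this forces $\euler{\rcl{A}}{\rcl{B}}>0$. Because the numerical Euler form on $\Knullred(X)$ coincides with the genuine alternating dimension count $\dim_k\Hom(A,B)-\dim_k\Ext^1(A,B)$, positivity of the Euler form together with the non-negativity $\dim_k\Ext^1(A,B)\ge 0$ gives $\dim_k\Hom(A,B)>0$, hence $\Hom(A,B)\neq 0$.

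There is no serious obstacle here: the whole argument is a formal manipulation of the Riemann-Roch identity combined with the non-negativity of $\dim_k\Ext^1$. The only points requiring any care are the strict positivity of the ranks of vector bundles (so that the slopes are well defined and the division is valid) and the fact that the Euler form descends to and agrees with the alternating dimension count, both of which are already in place from the preceding discussion.
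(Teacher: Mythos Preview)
Your argument is correct and is exactly the intended one: the paper omits the proof (marking the corollary with $\square$), and the only reasonable derivation is precisely the division of the Riemann--Roch identity by $\rk(A)\rk(B)$ followed by the observation that a strictly positive Euler pairing forces $\dim_k\Hom(A,B)>0$. You have also correctly noted the positivity of the ranks and the typographical slip $\Hom(X,Y)$ for $\Hom(A,B)$.
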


We recall that an object $E$ of $\Hh$ is called \emph{exceptional} if $\End(E)=k$ and $\Ext^1(E,E)=0$. In particular, an exceptional object is indecomposable.

\begin{Corollary}
The category $\coh(X)$ has an exceptional object $E$ if and only if $g_X=0$ and, additionally, $E$ has rank one, that is, $E$ is a line bundle. In particular, $\coh(X)$ has a tilting object if and only if $X$ is (isomorphic to) the Riemann sphere $\SSS$.
\end{Corollary}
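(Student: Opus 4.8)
The plan is to derive everything from the Riemann--Roch formula together with the single numerical constraint $\euler{E}{E}=1$ imposed by exceptionality.

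First I would treat the characterization of exceptional objects. Suppose $E$ is exceptional, so that $\euler{E}{E}=\dim_k\End(E)-\dim_k\Ext^1(E,E)=1-0=1$. Applying Riemann--Roch with $A=B=E$, the $2\times 2$ determinant vanishes because its two columns coincide, whence
\[
\euler{E}{E}=(1-g_X)\,\rk(E)^2 .
\]
Since $E$ is exceptional it is indecomposable, hence nonzero; a finite-length object would give $\euler{E}{E}=0$, so in fact $\rk(E)\geq 1$. The equation $(1-g_X)\rk(E)^2=1$, with $\rk(E)^2$ a positive integer and $g_X$ a non-negative integer, then forces $1-g_X>0$, i.e.\ $g_X=0$, and consequently $\rk(E)^2=1$, i.e.\ $\rk(E)=1$. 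Thus $E$ is a line bundle on a curve of genus zero.

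For the converse I would take $g_X=0$ and $E$ a line bundle and verify the two defining conditions directly. Serre duality gives $\Ext^1(E,E)=D\Hom(E,\tau E)$, and since $\tau$ acts on line bundles as tensoring by the dualizing sheaf $\om=\tau\Oo_X$ of degree $2g_X-2=-2$, the target $\tau E$ has strictly smaller degree than $E$; as a nonzero morphism of line bundles forces the degree of the source to be at most that of the target, we get $\Hom(E,\tau E)=0$ and hence $\Ext^1(E,E)=0$. Then $\euler{E}{E}=(1-g_X)\rk(E)^2=1$ yields $\dim_k\End(E)=1$, so $E$ is exceptional.

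Finally, for the ``in particular'' statement about tilting objects, the key observation is that a tilting object $T$ is a nonzero object with $\Ext^1(T,T)=0$, so any indecomposable direct summand $E$ of $T$ is a nonzero rigid object. Re-running the Riemann--Roch computation above gives $\dim_k\End(E)=\euler{E}{E}=(1-g_X)\rk(E)^2$; as the left-hand side is $\geq 1$ and $\rk(E)\geq 1$ (a finite-length summand is again excluded), this forces $g_X=0$. Since the only smooth projective curve of genus zero over an algebraically closed field is $\PP^1=\SSS$, we conclude $X\cong\SSS$. Conversely, on $\SSS$ the Beilinson object $\Oo_X\oplus\Oo_X(1)$ is a tilting object, so $\coh(X)$ has a tilting object precisely when $X\cong\SSS$. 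I expect the only delicate points to be the bookkeeping of the rank-zero case and the invocation that genus zero pins down $X$ up to isomorphism; everything else is a direct reading of Riemann--Roch.
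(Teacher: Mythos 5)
Your proof is correct and follows essentially the same route as the paper: the Riemann--Roch identity $1=\euler{E}{E}=(1-g_X)\rk(E)^2$ forces $g_X=0$ and $\rk(E)=1$, and the tilting object $\Oo_X\oplus\Oo_X(x)$ on the Riemann sphere $\SSS$ settles the existence direction. You are in fact somewhat more complete than the paper, which leaves implicit both the converse (that line bundles on a genus-zero curve are exceptional, via Serre duality) and the rigid-summand argument showing no tilting object can exist when $g_X\geq 1$.
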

\proof
Assume $E$ exceptional. By Riemann-Roch we obtain $1=\euler{E}{E}=(1-g_X)\rk(E)^2$, hence $g_X=0$ and $\rk(E)=1$. Concerning the last assertion, it is well known that $\coh(\SSS)$ has a tilting object $\Oo_X\oplus\Oo_X(x)$, where $\Oo_X(x)$ is the central term of 'the' non-split exact sequence $0\to \Oo_X\to \Oo_X(x)\to \Ss_x\to 0$.
\endproof

\section{The category of coherent sheaves on a weighted projective curve}\label{app:2}
Many aspects have already been dealt with in Sections~\ref{sect:WPC} and \ref{sect:HNC}. 

Let $\XX=(X,w)$ be an (WPC) and $\coh(\XX)$ its category of coherent sheaves. Let $\cohnull(\XX)$ denote the full subcategory of coherent sheaves of finite length. It is easily seen that the quotient categories $\coh(\XX)/\cohnull(\XX)$ and $\coh(X)/\cohnull(X)$ are equivalent. Hence the \define{function fields} $k(\XX)$ and $k(X)$ are isomorphic. In particular, the (WPC) $\XX$ determines the underlying smooth projective curve $X$.
The most important invariant of $\XX$, or $\coh(\XX)$, next to the function field $k(\XX)$, is the \define{orbifold Euler characteristic}, or just Euler characteristic $\chi_\XX$ of $\XX$.
The \define{Euler form} is the bilinear form on the Grothendieck group $\Knull(\XX)$ of the category $\coh(\XX)$ of coherent sheaves on $\XX$ which is given on (classes of) coherent sheaves by the expression $\euler{A}{B}=\dim \Hom(A,B)-\dim \Ext^{1}(A,B)$. 

For the convenience of the reader, we include the following treatment of orbifold Euler characteristic and weighted Riemann-Roch theorem from \cite{Lenzing:2018}.
Let $a_1,a_2,\ldots,a_t$ be the weight sequence associated to the weighted points of $\XX$. We put $\bar{a}=\lcm{a_1,a_2,\ldots,a_t}$. By means of the \define{averaged Euler form}
\begin{equation*}
\aveuler{X}{Y}=\frac{1}{\bar{a}}\sum_{j=0}^{\bar{a}-1}\euler{\tau^j X}{Y},
\end{equation*}
we define the \define{degree} as the linear form deg on $\Knull(\XX)$, given by the expression\footnote{The factor $\bar{a}$ is introduced to yield integer values for the degree.}
$$
\dg(A)=\bar{a}\aveuler{\OoX}{A}-\bar{a}\aveuler{\OoX}{\OoX}.
$$
Rank and degree then are uniquely determined by the following complementry properties:
\begin{enumerate}[(i)]
\item For $A$ in $\coh(\XX)$ we always have $\rk{A}\geq0$, further the structure sheaf $\OoX$ has rank one. Moreover, $\rk{A}=0$ holds exactly if $A$ has finite length. Also $\rk{}$ is preserved under automorphisms of $\coh(\XX)$, in particular under the Auslander-Reiten translation.
\item A non-zero object $A$ of finite length has degree $>0$, and the structure sheaf has degree zero. Further each simple sheaf $S_x$, concentrated in a point $x$ of $\XX$, has degree $\frac{\bar{a}}{w(x)}$.
\end{enumerate} 
We next define the \define{orbifold Euler characteristic} of $\XX$ as $\chi_\XX=\frac{2}{\bar{a}}\aveuler{\OoX}{\OoX}$. Clearly, for non-weighted smooth projective curves the orbifold Euler characteristic agrees with the ordinary Euler characteristic.

\begin{Theorem}\label{thm:OrbifoldEulerChar}
Let $\XX=X\wt{a_1,a_2,\ldots,a_t}$ be a weighted projective curve with underlying smooth projective curve $X$. Then the orbifold Euler characteristic of $\XX$ is given by the expression
\begin{equation}\label{eqn:OrbifoldEulerChar}
  \chi_\XX=\frac{2}{\bar{a}}\aveuler{\OoX}{\OoX}=\chi_X-\sum_{i=1}^{t}\left(1-\frac{1}{a_i}\right).
\end{equation}
\end{Theorem}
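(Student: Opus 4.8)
The plan is to reduce the assertion to a computation of the degree of the dualizing sheaf $\tau\OoX$ on $\XX$. First I would record that, by the slope identity recalled in item~(b) of the proof of Proposition~\ref{prop:ample}, applied to $A=\OoX$ (which has rank one and degree zero),
\begin{equation*}
\dg(\tau\OoX)=\mu(\tau\OoX)=\mu(\OoX)-\bar{a}\,\chi_\XX=-\bar{a}\,\chi_\XX,
\end{equation*}
so that $\chi_\XX=-\tfrac{1}{\bar{a}}\dg(\tau\OoX)$. Thus everything comes down to evaluating $\dg(\tau\OoX)$ independently of $\chi_\XX$, and the content of the theorem is precisely that this degree equals $\bar{a}\big[(2g_X-2)+\sum_{i=1}^t(1-\tfrac1{a_i})\big]$.

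For the degree computation I would use the point shift automorphisms $\si_x\in\Pic(\XX)$. From the defining universal extension \eqref{eq:universal_extension} with $E=\OoX$, the middle term $E_x=\bigoplus_{j}\Ext^1(\tau^jS_x,\OoX)\otimes\tau^jS_x$ collapses, by Serre duality and the normalization $\Hom(\OoX,S_{x})=k$, to a single simple sheaf supported at $x$. Hence $\si_x\OoX/\OoX$ is that simple, and $\dg(\si_x\OoX)=\dg S_x=\bar{a}/w(x)$: each $\si_x$ raises degree by $\bar{a}/w(x)$, so a full ordinary point raises it by $\bar{a}$, while at a weighted point $x_i$ one step $\si_{x_i}$ raises it by $\bar a/a_i$. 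Since the canonical class on $X$ can be represented by a divisor of degree $2g_X-2$ supported away from the finitely many weighted points, its image under $\coh(X)\hookrightarrow\coh(\XX)$ is a line bundle of $\XX$-degree $\bar a(2g_X-2)$.

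The main step, and the point I expect to be the real obstacle, is to identify $\tau\OoX$ with the orbifold canonical sheaf, i.e.\ to show
\begin{equation*}
\tau\OoX\;\cong\;\om_X\otimes\textstyle\prod_{i=1}^t\si_{x_i}^{\,a_i-1}\,\OoX ,
\end{equation*}
where $\om_X$ denotes the image of the dualizing sheaf of $X$. This is an orbifold Riemann--Hurwitz statement: it records how the Serre functor of $\coh(X)$ is modified by the insertion of a weight $a_i$ at $x_i$ in the cycle construction of Section~\ref{sect:WPC}, the correction being exactly $a_i-1$ point shifts $\si_{x_i}$. Granting this identification, the degree is additive over the factors, and summing the contributions from the previous paragraph yields
\begin{equation*}
\dg(\tau\OoX)=\bar a(2g_X-2)+\sum_{i=1}^t(a_i-1)\frac{\bar a}{a_i}
=\bar a\Big[(2g_X-2)+\sum_{i=1}^t\big(1-\tfrac1{a_i}\big)\Big].
\end{equation*}
Dividing by $-\bar a$ and using $\chi_X=2(1-g_X)=-(2g_X-2)$ gives $\chi_\XX=\chi_X-\sum_{i=1}^t(1-\tfrac1{a_i})$, as claimed. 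As a partial check supporting the identification, Serre duality together with the perpendicular description $\coh(X)=\frak S^\perp$ gives $\dim\Hom(\OoX,\tau\OoX)=\dim\Ext^1(\OoX,\OoX)=g_X$, matching $\dim\Hom(\Oo_X,\om_X)=g_X$ on the curve; but the degree itself is cleanest to extract from the orbifold canonical bundle formula above.
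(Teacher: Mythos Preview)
Your reduction to computing $\dg(\tau\OoX)$ is correct, and the bookkeeping with point shifts is fine. But the step you yourself flag as ``the real obstacle''---the identification $\tau\OoX\cong\om_X\otimes\prod_{i}\si_{x_i}^{\,a_i-1}\OoX$---is not proved, only granted. This is not a harmless black box: at the level of degrees (which is all you actually use), that identification is equivalent to the formula for $\chi_\XX$ you are trying to establish, so the argument as written begs the question. Extracting the orbifold Riemann--Hurwitz formula from the cycle construction of Section~\ref{sect:WPC} is a genuine piece of work that your outline does not supply, and the consistency check $\dim\Hom(\OoX,\tau\OoX)=g_X$ constrains only a single Euler pairing, not the degree.

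The paper's proof sidesteps the sheaf-level isomorphism entirely by working in the reduced Grothendieck group $\Knullred(\XX)$. There the analogue of your canonical-bundle formula is the identity
\[
\tau\rcl{\OoX}-\rcl{\OoX}=-\sum_{i=1}^t\rcl{S_i}+(t-\chi_X)\rcl{S_0},
\]
and the crucial point is that this can be \emph{verified directly}: the Euler form is nondegenerate on $\Knullred(\XX)$, so it suffices to pair both sides against the finite generating system $\rcl{\OoX},\rcl{S_0},\tau^j\rcl{S_i}$, which is an elementary check using only $\euler{\OoX}{\OoX}=1-g_X$, $\euler{\OoX}{S_i}=1$, and Serre duality. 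Applying $\sum_{j=0}^{\bar a-1}\tau^j$ then telescopes to $\tau^{\bar a}\rcl{\OoX}-\rcl{\OoX}=\bar a\,\delta\,\rcl{S_0}$ with $\delta=\sum_i(1-1/a_i)-\chi_X$, and a short Serre-duality reindexing gives $2\aveuler{\OoX}{\OoX}+\bar a\,\delta=0$. Your argument would become a proof if you replaced the unproven sheaf isomorphism by this $K$-theoretic identity---which is all your degree computation actually needs---and verified it as above.
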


\begin{proof}
We work in the reduced Grothendieck group $\Knullred(\XX)$. We put $a=\rcl{\OoX}$, $s_0=\rcl{S_x}$ where $S_x$ is the simple sheaf concentrated in a non-weighted point, $s_i=\rcl{S_i}$, where $S_i$ is the simple sheaf concentrated in $x_i$ with $\Hom(\OoX,S_i)=k$ ($i=1,2,\ldots,t$). Then we have
\begin{equation}\label{eqn:Tau}
  \tau a -a = -\sum_{i=1}^{t}s_i+(t-\chi_X)s_0,
\end{equation}
where $\chi_X=2\euler{a}{a}=2(1-g)$ is the Euler characteristic of the underlying smooth projective curve $X$. The validity of formula \eqref{eqn:Tau} is easily checked by forming for both sides the Euler product with each member of the generating system $a,s_0,\tau^j s_i$, $i=1,2,\ldots,t$ and using non-degeneracy of the Euler form on $\Knullred(\XX)$. Then, applying $\sum_{j=0}^{\bar{a}-1}\tau^j$ to \eqref{eqn:Tau} implies
\begin{equation}\label{eqn:TauUpP}
  \tau^{\bar{a}}a-a=\bar{a}\,\delta\, s_0,\text{ where }\delta=\sum_{j=1}^{t}(1-\frac{1}{a_i})-\chi_X.
\end{equation}
Finally, we obtain
\begin{equation}
  2\aveuler{a}{a}+\bar{a}\delta = \aveuler{a}{a}+\aveuler{a}{\tau^{\bar{a}}a} =
  \frac{1}{\bar{a}}\left(\sum_{j=0}^{\bar{a}-1}\euler{\tau^ja}{a}-\sum_{j=0}^{\bar{a}-1}\euler{\tau^{(\bar{a}-1)-j}a}{a}\right)  =0,
\end{equation}
proving the claim.
\end{proof}
We note that the rank allows a purely K-theoretic definition as $\rk(A)=\euler{A}{S}$ where $S$ is a simple sheaf concentrated in an ordinary point. This K-theoretic rank agrees with the sheaf-theoretic rank, defined for a coherent sheaf $A$ as its length in the Serre quotient $\coh(\XX)/\cohnull(\XX)$.
\begin{Theorem}[Riemann-Roch]\label{thm:WeightedRR}
Assume $b,c$ are members of $\Knull(\XX)$, where $\XX=X\wt{a_1,a_2,\ldots,a_t}$ then
\begin{equation}\label{eqn:WeightedRR}
  \aveuler{b}{c}=\aveuler{\OoX}{\OoX}\cdot\rk{b}\cdot \rk{c}+\frac{1}{\bar{a}}\cdot\begin{vmatrix}
\rk(b) \ & \rk(c) \\
\dg(b)& \dg(c)
\end{vmatrix}.
\end{equation}
\end{Theorem}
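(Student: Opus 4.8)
The plan is to use that both sides of \eqref{eqn:WeightedRR} are $\ZZ$-bilinear in $b$ and $c$ and that both descend to the reduced Grothendieck group $\Knullred(\XX)$, which is finitely generated. The right-hand side is assembled from the linear forms $\rk$ and $\dg$ and the constant $\aveuler{\OoX}{\OoX}$, where $\rk$ has the K-theoretic description $\rk(A)=\euler{A}{S_x}$ and $\dg$ is defined through $\aveuler{-}{-}$; the left-hand side is the averaged form itself. So I first record that $\aveuler{-}{-}$ factors through $\Knullred(\XX)$: since Serre duality gives $\euler{X}{Y}=-\euler{Y}{\tau X}$, the radical of the Euler form is two-sided and $\tau$-stable, hence every summand $\euler{\tau^jX}{Y}$ vanishes as soon as $X$ or $Y$ lies in the radical. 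It therefore suffices to verify \eqref{eqn:WeightedRR} on a generating system of $\Knullred(\XX)$.

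Next I would take the generating system already used in the proof of Theorem~\ref{thm:OrbifoldEulerChar}, namely $a=\rcl{\OoX}$, the class $s_0=\rcl{S_x}$ of a simple sheaf at an ordinary point, and the classes $\rcl{\tau^jS_i}$ of the simples in the tube over each weighted point $x_i$. Their rank and degree are known: $\rk(a)=1$, $\dg(a)=0$, while every simple class has rank $0$ with $\dg(s_0)=\bar{a}$ and $\dg(\rcl{\tau^jS_i})=\bar{a}/a_i$. With these data the identity splits into four cases according to whether each of $b,c$ equals $a$ or is a torsion generator. The case $b=c=a$ is merely the definition of $\aveuler{\OoX}{\OoX}$, and the case $b=a$, $c$ torsion is precisely the defining relation $\dg(c)=\bar{a}\aveuler{\OoX}{c}$ of the degree for a rank-zero class.

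The torsion--torsion case requires $\aveuler{S}{T}=0$ for simple $S,T$, which I settle by the local computation of the Euler form between simples: the products vanish unless $S,T$ lie at the same point, and summing $\euler{\tau^aS_i}{\tau^bS_i}=\delta_{a,b}-\delta_{b,a+1}$ over a full $\tau$-period yields $0$, while $\euler{S_x}{S_x}=0$ since $\tau$ fixes an ordinary simple. The remaining case has $b$ torsion and $c=a$, where the formula predicts $\aveuler{b}{\OoX}=-\dg(b)/\bar{a}$. Here I would apply the Serre-duality identity $\euler{\tau^jS}{\OoX}=-\euler{\OoX}{\tau^{j+1}S}$ together with the fact that $\tau$ is periodic on torsion with period dividing $\bar{a}$, so that $\tau^{\bar{a}}S\cong S$ and the cyclic sum $\sum_j\euler{\OoX}{\tau^jS}$ may be re-indexed freely; evaluating $\euler{\OoX}{\tau^jS}$ via $\Hom(\OoX,\tau^jS)\neq0$ for exactly one $j$ per $\tau$-orbit and $\Ext^1(\OoX,\tau^jS)=D\Hom(\tau^jS,\tau\OoX)=0$ then produces exactly $-\dg(b)/\bar{a}$.

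I expect this last case to be the main obstacle: it is the only place where Serre duality in the first argument must be combined carefully with the torsion periodicity of $\tau$, whereas the other three cases follow immediately from the definitions of rank and degree and from the tube structure of the simples.
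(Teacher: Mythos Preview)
Your proposal is correct and follows exactly the approach of the paper: reduce to $\Knullred(\XX)$ and check the identity on the generating system $a,s_0,\tau^js_i$. The paper's proof is a single sentence to this effect, and you have simply supplied the four case verifications that are left implicit there; your treatment of the ``torsion, $a$'' case could be shortened to $\aveuler{b}{\OoX}=-\aveuler{\OoX}{b}=-\dg(b)/\bar a$ once the $b=a$, torsion case is in hand, but the direct computation you give is also fine.
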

\begin{proof}
Passing to the reduced Grothendieck group $\Knullred(\XX)$ one checks the equality of both sides of \eqref{eqn:WeightedRR} on the generating system $a,s_0,\tau^js_i$, $i=1,2,\ldots,t$, $j\in\ZZ_{a_i}$.
\end{proof}


\begin{thebibliography}{10}

\bibitem{Artin:Zhang:1994}
M.~Artin and J.~J. Zhang, \emph{Noncommutative projective schemes}, Adv. Math.
  \textbf{109} (1994), no.~2, 228--287. \MR{1304753}

\bibitem{Beardon:1995}
Alan~F. Beardon, \emph{The geometry of discrete groups}, Graduate Texts in
  Mathematics, vol.~91, Springer-Verlag, New York, 1995, Corrected reprint of
  the 1983 original. \MR{1393195}

\bibitem{Buchweitz:1986}
Ragnar-Olaf Buchweitz, \emph{{Maximal Cohen-Macaulay Modules and
  Tate-Cohomology over Gorenstein Rings}}, preprint, 1986.

\bibitem{Canonaco:2000}
Alberto Canonaco, \emph{A {B}eilinson-type theorem for coherent sheaves on
  weighted projective spaces}, J. Algebra \textbf{225} (2000), no.~1, 28--46.
  \MR{1743649}

\bibitem{Canonaco:2003}
Alberto Canonaco, \emph{Beilinson resolutions on weighted projective spaces}, C. R.
  Math. Acad. Sci. Paris \textbf{336} (2003), no.~1, 35--40. \MR{1968899}

\bibitem{Chevalley:1951}
Claude Chevalley, \emph{Introduction to the {T}heory of {A}lgebraic {F}unctions
  of {O}ne {V}ariable}, Mathematical Surveys, No. VI, American Mathematical
  Society, New York, N. Y., 1951. \MR{0042164}

\bibitem{Collins:Zieschang:1998}
D.~J. Collins, R.~I. Grigorchuk, P.~F. Kurchanov, and H.~Zieschang,
  \emph{Combinatorial group theory and applications to geometry},
  Springer-Verlag, Berlin, 1998, Translated from the 1990 Russian original by
  P. M. Cohn, Reprint of the original English edition from the series
  Encyclopaedia of Mathematical Sciences [{{\i}t Algebra. VII}, Encyclopaedia
  Math. Sci., 58, Springer, Berlin, 1993; MR1265269 (95g:57004)]. \MR{1658468}

\bibitem{Dolgachev:1975}
I.~V. Dolga\v{c}ev, \emph{Automorphic forms, and quasihomogeneous
  singularities}, Functional Anal. Appl. \textbf{9} (1975), no.~2, 149--151.
  \MR{0568895}

\bibitem{Ebeling:2003}
Wolfgang Ebeling, \emph{The {P}oincar\'{e} series of some special
  quasihomogeneous surface singularities}, Publ. Res. Inst. Math. Sci.
  \textbf{39} (2003), no.~2, 393--413. \MR{1987870}

\bibitem{Gabriel:1962}
Pierre Gabriel, \emph{Des cat\'{e}gories ab\'{e}liennes}, Bull. Soc. Math.
  France \textbf{90} (1962), 323--448. \MR{0232821}

\bibitem{Geigle:Lenzing:1987}
Werner Geigle and Helmut Lenzing, \emph{A class of weighted projective curves
  arising in representation theory of finite-dimensional algebras},
  Singularities, representation of algebras, and vector bundles ({L}ambrecht,
  1985), Lecture Notes in Math., vol. 1273, Springer, Berlin, 1987,
  pp.~265--297. \MR{MR915180 (89b:14049)}

\bibitem{Geigle:Lenzing:1991}
Werner Geigle and Helmut Lenzing, \emph{Perpendicular categories with applications to representations
  and sheaves}, J. Algebra \textbf{144} (1991), no.~2, 273--343. \MR{1140607
  (93b:16011)}

\bibitem{Katok:1992}
Svetlana Katok, \emph{Fuchsian groups}, Chicago Lectures in Mathematics,
  University of Chicago Press, Chicago, IL, 1992. \MR{1177168}

\bibitem{Kussin:Lenzing:Meltzer:2013}
Dirk Kussin, Helmut Lenzing, and Hagen Meltzer, \emph{Triangle singularities,
  {ADE}-chains, and weighted projective lines}, Adv. Math. \textbf{237} (2013),
  194--251. \MR{3028577}

\bibitem{Lenzing:1994}
Helmut Lenzing, \emph{Wild canonical algebras and rings of automorphic forms},
  Finite-dimensional algebras and related topics ({O}ttawa, {ON}, 1992), NATO
  Adv. Sci. Inst. Ser. C Math. Phys. Sci., vol. 424, Kluwer Acad. Publ.,
  Dordrecht, 1994, pp.~191--212. \MR{1308987 (95m:16008)}

\bibitem{Lenzing:Miskolc}
Helmut Lenzing, \emph{Representations of finite-dimensional algebras and singularity
  theory}, Trends in ring theory ({M}iskolc, 1996), CMS Conf. Proc., vol.~22,
  Amer. Math. Soc., Providence, RI, 1998, pp.~71--97. \MR{MR1491919
  (99d:16014)}

\bibitem{Lenzing:1999}
Helmut Lenzing, \emph{Coxeter transformations associated with finite-dimensional
  algebras}, Computational methods for representations of groups and algebras
  ({E}ssen, 1997), Progr. Math., vol. 173, Birkh\"auser, Basel, 1999,
  pp.~287--308. \MR{1714618 (2001d:16017)}

\bibitem{Lenzing:2007}
Helmut Lenzing, \emph{Hereditary categories}, Handbook of tilting theory, London Math.
  Soc. Lecture Note Ser., vol. 332, Cambridge Univ. Press, Cambridge, 2007,
  pp.~105--146. \MR{2384609 (2009c:18015)}

\bibitem{Lenzing:2017}
Helmut Lenzing, \emph{Weighted projective lines and {R}iemann surfaces}, Proceedings
  of the 49th {S}ymposium on {R}ing {T}heory and {R}epresentation {T}heory,
  Symp. Ring Theory Represent. Theory Organ. Comm., Shimane, 2017, pp.~67--79.
  \MR{3644095}

\bibitem{Lenzing:2018}
Helmut Lenzing, \emph{On the {K}-theory of weighted projective curves},
  Representations of algebras, Contemp. Math., vol. 705, Amer. Math. Soc.,
  Providence, RI, 2018, pp.~131--153. \MR{3799493}

\bibitem{Lenzing:Pena:1997}
Helmut Lenzing and Jos{\'e}~Antonio de~la Pe{\~n}a, \emph{Wild canonical
  algebras}, Math. Z. \textbf{224} (1997), no.~3, 403--425. \MR{1439198}

\bibitem{Lenzing:Pena:2011}
Helmut Lenzing and Jos\'{e}~Antonio de~la Pe{\~n}a, \emph{Extended canonical
  algebras and {F}uchsian singularities}, Math. Z. \textbf{268} (2011),
  no.~1-2, 143--167. \MR{2805427}

\bibitem{Lenzing:Reiten:2006}
Helmut Lenzing and Idun Reiten, \emph{Hereditary {N}oetherian categories of
  positive {E}uler characteristic}, Math. Z. \textbf{254} (2006), no.~1,
  133--171. \MR{2232010}

\bibitem{Levy:1999}
Silvio Levy (ed.), \emph{{\it {T}he eightfold way}}, Mathematical Sciences
  Research Institute Publications, vol.~35, Cambridge University Press,
  Cambridge, 1999, The beauty of Klein's quartic curve. \MR{1722410}

\bibitem{Milnor:1975a}
John Milnor, \emph{On the {$3$}-dimensional {B}rieskorn manifolds
  {$M(p,q,r)$}},  (1975), 175--225. Ann. of Math. Studies, No. 84. \MR{0418127}

\bibitem{Orlov:2009}
Dmitri Orlov, \emph{Derived categories of coherent sheaves and triangulated
  categories of singularities}, Algebra, arithmetic, and geometry: in honor of
  {Y}u. {I}. {M}anin. {V}ol. {II}, Progr. Math., vol. 270, Birkh\"auser Boston
  Inc., Boston, MA, 2009, pp.~503--531. \MR{2641200 (2011c:14050)}

\bibitem{Reiten:VandenBergh:2002}
Idun~Reiten and Michel~Van~den Bergh, \emph{Noetherian hereditary abelian categories
  satisfying {S}erre duality}, J. Amer. Math. Soc. \textbf{15} (2002), no.~2,
  295--366. \MR{1887637}

\bibitem{Sherbak:1978}
I.~G. Sherbak, \emph{Algebras of automorphic forms with three generators},
  Functional Analysis and Its Applications \textbf{12} (1978), no.~2, 156--158.

\bibitem{Thurston:2002}
William~P. Thurston, \emph{{The Geometry and Topology of Three-Manifolds}},
  Electronic version 1.1 --- March 2002, available at
  http://www.msri.org/publications/books/gt3m/PDF/13.pdf.

\bibitem{Tzermias:1995}
Pavlos Tzermias, \emph{The group of automorphisms of the {F}ermat curve}, J.
  Number Theory \textbf{53} (1995), no.~1, 173--178. \MR{1344839}

\bibitem{Wagreich:1980}
Philip Wagreich, \emph{{A}lgebras of automorphic forms with few generators},
  Trans. Amer. Math. Soc. \textbf{262} (1980), 367--389.

\end{thebibliography}

\providecommand{\MRhref}[2]{%
  \href{http://www.ams.org/mathscinet-getitem?mr=#1}{#2}
}
\providecommand{\href}[2]{#2}

\end{document}